
\documentclass[12pt]{amsart}
\usepackage{amssymb}
\usepackage{hyperref}

\setlength{\voffset}{-1in}
\setlength{\topmargin}{1.5cm}
\setlength{\hoffset}{-1in}
\setlength{\oddsidemargin}{2cm}
\setlength{\evensidemargin}{2cm}
\setlength{\textwidth}{17cm}
\setlength{\textheight}{24cm}

\theoremstyle{plain}
\newtheorem{theorem}{Theorem}[section]
\newtheorem{proposition}[theorem]{Proposition}
\newtheorem{corollary}[theorem]{Corollary}
\newtheorem{lemma}[theorem]{Lemma}

\theoremstyle{definition}
\newtheorem{definition}[theorem]{Definition}
\newtheorem{example}[theorem]{Example}
\newtheorem{remark}[theorem]{Remark}

\newtheorem{conjecture}[theorem]{Conjecture}

\theoremstyle{remark}
\newtheorem{notation}[theorem]{Notation}

\numberwithin{equation}{section}

\newcommand{\N}{\mathbb N}
\newcommand{\Z}{\mathbb Z}

\newcommand{\R}{\mathbb R}
\newcommand{\C}{\mathbb C}
\newcommand{\Ha}{\mathbb H}

\newcommand{\GL}{\operatorname{GL}}
\newcommand{\SL}{\operatorname{SL}}

\newcommand{\SO}{\operatorname{SO}}
\newcommand{\SU}{\operatorname{SU}}
\newcommand{\U}{\operatorname{U}}
\newcommand{\Sp}{\operatorname{Sp}}

\newcommand{\Spin}{\operatorname{Spin}}

\newcommand{\sll}{\mathfrak{sl}}
\newcommand{\so}{\mathfrak{so}}

\newcommand{\su}{\mathfrak{su}}

\newcommand{\op}{\operatorname}

\newcommand{\Spec}{\operatorname{Spec}}
\newcommand{\Hom}{\operatorname{Hom}}
\newcommand{\Iso}{\operatorname{Iso}}

\newcommand{\diag}{\operatorname{diag}}

\newcommand{\ba}{\backslash}
\newcommand{\mult}{\operatorname{mult}}
\newcommand{\mi}{\mathtt{i}}
\newcommand{\norma}[1]{\|{#1}\|}

\newcommand{\Cas}{\textit{Cas}}

\title[Spectra of orbifold]{Spectra of orbifolds with cyclic fundamental groups}
\author{Emilio A. Lauret}
\address{CIEM--FaMAF \\ Universidad Nacional de C\'ordoba\\ 5000-C\'ordoba, Argentina.}
\email{elauret@famaf.unc.edu.ar}
\subjclass[2010]{Primary 58J50, Secondary 58J53, 17B10}
\keywords{spectrum, isospectral, one-norm, lens space, cyclic fundamental group}
\date{\today}

\begin{document}

\begin{abstract}
We give a simple geometric characterization of isospectral orbifolds covered by spheres, complex projective spaces and the quaternion projective line having cyclic fundamental group.
The differential operators considered are Laplace-Beltrami operators twisted by characters of the corresponding fundamental group.
To prove the characterization, we first give an explicit description of their spectra by using generating functions.
We also include many isospectral examples.
\end{abstract}

\maketitle

\section{Introduction}
Let $M$ be a connected compact Riemannian manifold, and let $\Gamma_1$ and $\Gamma_2$ be cyclic subgroups of $\Iso(M)$.
In this paper we will consider the question:
\begin{quote}
\emph{Under what conditions the (good) orbifolds $\Gamma_1\ba M$ and $\Gamma_2\ba M$ are isospectral?}
\end{quote}
Here, isospectral means that the Laplace-Beltrami operators on $\Gamma_1\ba M$ and on $\Gamma_2\ba M$ have the same spectrum.
Such operators are given by the Laplace-Beltrami operator on $M$ acting on $\Gamma_j$-invariant smooth functions on $M$.

Clearly, the condition on $\Gamma_1$ and $\Gamma_2$ of being conjugate in $\Iso(M)$ is sufficient since in this case $\Gamma_1\ba M$ and $\Gamma_2\ba M$ are isometric.
However, it is well known that this condition is not necessary due to examples of non-isometric isospectral lens spaces constructed by Ikeda~\cite{Ikeda80_isosp-lens}.
Lens spaces are spherical space forms with cyclic fundamental groups, that is, they are the only compact manifolds with constant sectional curvature and cyclic fundamental group.

In the recent paper \cite{LMR15-onenorm}, R.~Miatello, J.P.~Rossetti and the author show an isospectral characterization among lens spaces in terms of isospectrality of their associated congruence lattices with respect to the \emph{one-norm} $\norma{\cdot}_1$.
More precisely, each cyclic subgroup $\Gamma$ of $G=\SO(2n)=\op{Iso}^+(S^{2n-1})$ has associated a sublattice $\mathcal L_\Gamma$ of the weight lattice of $G$.
Then, it is shown that $\Gamma_1\ba S^{2n-1}$ and $\Gamma_2\ba S^{2n-1}$ are isospectral if and only if for each $k\geq0$ there are the same number of elements in $\mathcal L_{\Gamma_1}$ and $\mathcal L_{\Gamma_2}$ having one-norm equal to $k$.

The aim of this paper is to extend the results in \cite{LMR15-onenorm} to more general locally homogeneous compact Riemannian manifolds having cyclic fundamental group.
Indeed, we will focus here on spaces covered by compact symmetric spaces of real rank one.

\subsection{Method}

We fix an arbitrary compact connected Riemannian manifold $(M,g)$.
The spectrum of the Laplace-Beltrami operator $\Delta$ acting on smooth functions on $M$ is given by a sequence of eigenvalues
\begin{equation}
0=\lambda_0<\lambda_1<\lambda_2<\dots,
\end{equation}
each of them with finite multiplicity
\begin{equation}
\mult_{\lambda_k}(\Delta):=\dim \op{Eig}_{\lambda_k}(\Delta).
\end{equation}
Here, $\op{Eig}_{\lambda_k}(\Delta)$ denotes the eigenspace of $\Delta$ with eigenvalue $\lambda_k$.
The \emph{spectral zeta function} of $\Delta$, also called \emph{Minakshisundaram-Pleijel zeta function}, is given by
\begin{equation}
\zeta_{\Delta}(s) = \sum_{k\geq1} \mult_{\lambda_k}(\Delta)\; \lambda_k^{-s}.
\end{equation}

Let $\Gamma$ be a discrete cocompact subgroup of $\Iso(M)$.
The space $\Gamma\ba M$ is compact and inherits a structure of good Riemannian orbifold $(\Gamma\ba M,g)$, which is a manifold if and only if $\Gamma$ acts freely on $M$.
The operator $\Delta$ commutes with isometries, thus the Laplace-Beltrami operator $\Delta_{\Gamma}$ on $(\Gamma\ba M,g)$ coincides with $\Delta$ restricted to $\Gamma$-invariant smooth functions on $M$.
Consequently, any eigenvalue in the spectrum of $\Delta_{\Gamma}$ is in the set $\{\lambda_k:k\geq0\}$, and
\begin{equation}
  \mult_{\lambda_k}(\Delta_{\Gamma}) = \dim \op{Eig}_{\lambda_k}(\Delta)^\Gamma.
\end{equation}
Hence, $0\leq \mult_{\lambda_k}(\Delta_{\Gamma})\leq \mult_{\lambda_k}(\Delta)$
and $\lambda_k$ is an eigenvalue of $\Delta_{\Gamma}$ unless $\dim \op{Eig}_{\lambda_k}(\Delta)^\Gamma=0$.

Following an idea of Ikeda, we encode the spectrum of $\Delta_{\Gamma}$ via the following formal power series
\begin{equation}\label{eq1:F(z)}
F_{\Delta_\Gamma}(z) := \sum_{k\geq0} \mult_{\lambda_k}(\Delta_\Gamma)\, z^k,
\end{equation}
that we called the \emph{spectral generating function} associated to $\Gamma\ba M$.
Clearly, $\Gamma_1\ba M$ and $\Gamma_2\ba M$ are isospectral if and only if $F_{\Delta_{\Gamma_1}}(z) = F_{\Delta_{\Gamma_2}}(z)$.

We now assume that $M=G/K$ is a compact homogeneous Riemannian manifold and $\Gamma$ is a cyclic subgroup of $G$ contained in a (fixed) maximal torus $T$.
These hypotheses make easier the explicit computation of $\dim \op{Eig}_{\lambda_k}(\Delta)^\Gamma$.
Indeed, $\op{Eig}_{\lambda_k}(\Delta)$ is a finite dimensional representation of $G$, so it decomposes diagonally into weight spaces with respect to $T$.
Hence, $\op{Eig}_{\lambda_k}(\Delta)$ is the sum over the set $\mathcal L_\Gamma$ of $\Gamma$-invariant weights of the corresponding weight spaces.
Consequently, its dimension is equal to the sum over elements in $\mathcal L_\Gamma$ of the corresponding multiplicities.
In conclusion, the determination of $\mult_{\lambda_k}(\Delta_{\Gamma})$ reduces to computing the multiplicities of the elements in $\mathcal L_\Gamma$ in the representation $\op{Eig}_{\lambda_k}(\Delta)$ of the group $G$.

\subsection{Results}
For compact symmetric spaces of real rank one, the representations $\op{Eig}_{\lambda_k}(\Delta)$ for $k\geq0$ are spherical representations, which are well known.
Their multiplicities satisfy a simple (geometric) formula when $M=P^n(\C)$, $S^{2n}$, $P^1(\Ha)$ or $S^{2n-1}$ for $n\geq1$ (Proposition~\ref{prop:multiplicities}).
More precisely, in the weight lattice associated to $G=\SU(n+1)$, $\SO(2n+1)$, $\Sp(2)$ or $\SO(2n)$, there is a norm $\norma{\cdot}$ such that the multiplicity of a weight depends only on its norm value.
Such norm is the \emph{maximum norm} $\norma{\cdot}_\infty$ for $\Sp(2)$ and the \emph{one-norm} $\norma{\cdot}_1$ for the other cases.

Let $\Gamma$ be a subgroup of a maximal torus $T$ of $G$.
Let $\mathcal L_\Gamma$ be the lattice given by $\Gamma$-invariant weights.
The mentioned geometric condition satisfied by the multiplicities of the spherical representations, allows us to give explicit formulas for $\mult_{\lambda_k}(\Delta_{\Gamma})$ in terms of the number of weights in $\mathcal L_\Gamma$ with a fixed norm (Theorem~\ref{thm:multiplicities}).
More precisely, in terms of $N_{\mathcal L_\Gamma}(k):=\#\{\mu\in\mathcal L_\Gamma:\norma{\mu}=k\}$.
We define in \eqref{eq3:vartheta_L} the \emph{generating theta function} associated to the lattice $\mathcal L_\Gamma$ given by \begin{equation}\label{eq1:theta(z)}
  \vartheta_{\mathcal L_{\Gamma}}(z) = \sum_{k\geq0} N_{\mathcal L_\Gamma}(k) \, z^k.
\end{equation}

In Theorem~\ref{thm:spectradescription} we prove
\begin{equation}\label{eq1:Fformula}
F_{\Delta_\Gamma} (z)=
\begin{cases}
\dfrac{1}{(1-z)^{n}}\; \vartheta_{\mathcal L_{\Gamma}}(z)
    &\quad\text{if $M=P^n(\C)$,}\\[5mm]
\dfrac{1+z}{(1-z^2)^{n}}\; \vartheta_{\mathcal L_{\Gamma}}(z)
    &\quad\text{if $M=S^{2n}$,}\\[5mm]
\dfrac{1+z}{(1-z^2)^{2}}\; \vartheta_{D_2\cap\mathcal L_{\Gamma}}(z)
    &\quad\text{if $M=P^1(\Ha)$,}\\[5mm]
\dfrac{1}{(1-z^2)^{n-1}}\; \vartheta_{\mathcal L_{\Gamma}}(z)
    &\quad\text{if $M=S^{2n-1}$.}
\end{cases}
\end{equation}
Here, $D_2=\{(a,b)\in\Z^2: a+b\equiv0\pmod2\}$.
This formula is an alternative and simple way to describe the spectrum of the Laplace-Beltrami operator $\Delta_{\Gamma}$ on $\Gamma\ba M$, for $M=P^n(\C)$, $S^{2n}$, $P^1(\Ha)$ and $S^{2n-1}$ for $\Gamma$ included in $T$.

Clearly, \eqref{eq1:Fformula} gives an isospectral characterization among locally symmetric spaces of the form $\Gamma\ba M$ with $M$ as above.
Namely, $\Gamma\ba M$ and $\Gamma'\ba M$ are isospectral if and only if $\vartheta_{\mathcal L_{\Gamma}}(z)=\vartheta_{\mathcal L_{\Gamma'}}(z)$ (Theorem~\ref{thm:characterization}).

We actually consider in all previous results, Laplace operators twisted by a character of the fundamental group.
The spectrum of such operator has a similar description as in the previous case (trivial character).
Its associated set $\mathcal L_{\Gamma,\chi}$ is in this case a shifted lattice of $\mathcal L_\Gamma$ and formula \eqref{eq1:Fformula} is still valid.

Furthermore, in the untwisted case (i.e.\ $\chi$ trivial), we write the generating theta function $\vartheta_{\mathcal L_\Gamma}(z)$ as a rational function, by using Ehrhart's theory for counting integer points in polytopes (Theorem~\ref{thm:Ehrhart}).
This implies that $F_{\Delta_\Gamma}(z)$ is a rational function.
We also give in Proposition~\ref{prop:Zagier} an alternative description of $\vartheta_{\mathcal L_\Gamma}(z)$ for lens spaces $\Gamma\ba S^{2n-1}$, which implies a formula for $F_{\Delta_\Gamma}(z)$ already known by Ikeda (see Remark~\ref{rem:Ikeda-formula}).

\subsection{Examples}
In Section~\ref{sec:isospectrality} we find twisted and untwisted isospectral examples by making use of the isospectral characterization in Theorem~\ref{thm:characterization} and the explicit parametrization of cyclic subgroups in $G$ (see \S\ref{subsec:classicalgroups}, \S\ref{subsec:cyclic}), where $M=G/K$ is one of the cases considered.
By using the computer, we found for small values of $n$ and $q$, every pair of twisted and untwisted isospectral orbifolds covered by $P^n(\C)$, $S^{2n}$ and $S^{2n-1}$, and also by $P^1(\Ha)$ for $n=2$, having cyclic fundamental groups of order $q$.

The most significant examples can be summarized in the next observations:
\begin{enumerate}
\item there exist twisted isospectral lens spaces in dimension 3 (Remark~\ref{rem:supportconjecture});
\item there exist no untwisted isospectral orbifolds covered by $S^3$ and $S^4$ with cyclic fundamental group of order $q\leq 200$ (Remark~\ref{rem:supportconjecture});
\item there exist untwisted isospectral orbifold lens spaces covered by $S^d$ with $5\leq d\leq 8$, thus such examples exists for any dimension $d\geq 5$ by \cite{Shams11} (Remark~\ref{rem:Shams});
\item there exists a $3$-dimensional lens space (a manifold) that is twisted isospectral to an orbifold lens space with non-trivial singularities (Remark~\ref{rem:twistediso-mfd-orb});
\item there are examples of twisted and untwisted isospectrality covered by $P^n(\C)$ for $n=2,3$, that is, in dimension $4$ and $6$;
\item there exists a pair of untwisted isospectral orbifolds covered by $P^1(\Ha)$ with cyclic fundamental group of order $q=4$, the one with minimum dimension (Example~\ref{ex:P^1(H)q=4}).
\end{enumerate}

\subsection{Previous results}

Lens spaces have been a fertile ground for spectral inverse problems.
Ikeda and Yamamoto~\cite{IkedaYamamoto79}\cite{Yamamoto80} proved that there are no isospectral lens spaces in dimension $3$ by giving an explicit formula for the spectral generating function $F_{\Delta_\Gamma}(z)$ by using Molien's formula.
Boldt~\cite{Boldt15} proved that there are no Dirac isospectral $3$-dimensional lens spaces with fundamental group of order prime. 
His proof uses $p$-adic numbers of cyclotomic fields and the explicit formula for the spectral generating functions associated to the positive and negative eigenvalues of the Dirac operator given by B\"ar~\cite{Bar96}.
Furthermore, Ikeda used the mentioned explicit formula for $F_{\Delta_\Gamma}(z)$ to construct several examples (from dimension $5$ on) of isospectral lens spaces (\cite{Ikeda80_isosp-lens}).
Generalizing such formula for the Hodge-Laplace operators on $p$-forms, he also obtained for each $p_0>0$, examples of lens spaces that are $p$-isospectral for every $0\leq p<p_0$ but not $p_0$-isospectral (\cite{Ikeda88}).
Shams~\cite{Shams11} generalizes Ikeda's method to orbifold lens spaces, obtaining examples from any dimension greater than $8$.

As we mentioned above, in \cite{LMR15-onenorm}, Miatello, Rossetti and the author prove the characterization in Theorem~\ref{thm:characterization} for lens spaces.
Furthermore, it shows a similar characterization for lens spaces $p$-isospectral for all $p$, which allows them to construct the first pair of Riemannian manifolds $p$-isospectral for all $p$ but not strongly isospectral.
DeFord and Doyle~\cite{DeFordDoyle14}, answering a question in \cite{LMR15-onenorm}, obtain a sufficient condition to construct such examples.
In \cite{BoldtLauret-onenormDirac}, Boldt and the author generalize the method in \cite{LMR15-onenorm} to the Dirac operator on spin lens spaces.
The article \cite{LMR-survey} contains a summary of all these results.

Similarly as in Theorem~\ref{thm:Ehrhart}, Mohades and Honari~\cite{MohadesHonari16} have recently used Ehrhart's theory for counting points in rational polytopes to describe the spectrum of a lens space.
They also associate to each lens space a toric variety, obtaining geometric consequences for pairs of untwisted isospectral lens spaces.

\subsection{Acknowledgement}
The author wishes to thank Leandro Cagliero and Jorge Vargas for helpful conversations on representation theory and also Sebastian Boldt and Ramiro Lafuente for helpful comments concerning the algorithms used in the last section.
The author also wishes to thank the support of the Oberwolfach Leibniz Fellow programme in May-July 2013 and in August-November 2014, when this project started.

\section{Preliminaries}\label{sec:abelian-case}
This section reviews some of the standard facts on the spectra of twisted Laplace operators on locally symmetric spaces of compact type with cyclic fundamental groups.

\subsection{Spectra of twisted Laplace operators}\label{subsec:spectra}

Let $G$ be a Lie group and let $(M,g)$ be a connected compact $G$-homogeneous Riemannian manifold, that is, $G$ acts transitively and almost effectively by isometries on $M$.
Let us denote by $K$ the isotropy subgroup of $G$ at some point $p\in M$, and let $\mathfrak g$ and $\mathfrak k$ be the Lie algebras of $G$ and $K$ respectively.
It turns out that $K$ is compact and we have the identifications $M\simeq G/K$, $g\cdot p\leftrightarrow gK$, and $T_pM\simeq T_{eK}G/K\simeq \mathfrak p$, where $\mathfrak p$ is an $\op{Ad}(K)$-invariant subspace of $\mathfrak g$ satisfying $\mathfrak g=\mathfrak k\oplus \mathfrak p$.
It is well known that the $G$-invariant Riemannian metrics on $M$ are in correspondence with the $\op{Ad}(K)$-invariant inner products on $\mathfrak p$.

We will restrict our attention to the standard metric, that is, the Riemannian metric is given by a negative multiple of the Killing form $B(\cdot,\cdot)$ restricted to $\mathfrak p\times\mathfrak p$.
Note that in this case $G$ must be compact and semisimple.
However, we will work in this section under the slightly weaker assumption described in the next paragraph.

We assume that $G$ is a compact Lie group (not necessarily semisimple).
Let $\mathfrak g$ denote its Lie algebra, thus $\mathfrak g=\mathfrak z\oplus \mathfrak g_1$ with $\mathfrak z$ the center of $\mathfrak g$ and $\mathfrak g_1=[\mathfrak g,\mathfrak g]$ the semisimple part.
Let $\langle\cdot,\cdot\rangle$ be an inner product on $\mathfrak g$ satisfying
\begin{itemize}
  \item $\langle \mathfrak z,\mathfrak g_1\rangle=0$;
  \item $\langle\cdot,\cdot\rangle|_{\mathfrak g_1\times \mathfrak g_1} = -rB_{\mathfrak g_1}$ for some $r>0$;
  \item there is an orthonormal free basis of $\{Z\in\mathfrak z: \exp Z=e\}$.
\end{itemize}
We assume that the Riemannian metric $g$ on $M$ corresponds to $\langle\cdot,\cdot\rangle|_{\mathfrak p\times\mathfrak p}$.

Clearly, every discrete subgroup $\Gamma$ in  $G$ is finite.
The space $\Gamma\ba M$ inherits a structure of Riemannian (good) orbifold, which is a Riemannian manifold when $\Gamma$ acts freely on $M$ (see for instance \cite{Gordon12-orbifold}).

Let $\Gamma$ be a finite subgroup of $G$ and let $\chi:\Gamma\to\GL(W_\chi)$ be a finite dimensional representation of $\Gamma$.
We consider the bundle
$$
E_{\Gamma,\chi}:= \Gamma\ba G\times_\chi W_\chi \to \Gamma\ba G/K
$$
given by the direct product $G\times W_\chi$ under the relation $(g,w)\sim(\gamma gk,\chi(\gamma)w)$ for every $k\in K$ and $\gamma\in\Gamma$.
We will denote by $[x,w]$ the class of $(x,w)\in G\times W_\chi$ in $E_{\Gamma,\chi}$.
The space $\Gamma^\infty(E_{\Gamma,\chi})$ of smooth sections of $E_{\Gamma,\chi}$ is isomorphic to $C^\infty(\Gamma\ba G/K;\chi):=\{f:G\to W_\chi\,\text{ smooth}: f(\gamma xk)=\chi(\gamma)f(x)\;\forall\,\gamma\in\Gamma,\,k\in K\}$.
Indeed, the function $xK\mapsto[x,f(x)]$ is in $\Gamma^\infty(E_{\Gamma,\chi})$ if and only if $f\in C^\infty(\Gamma\ba G/K;\chi)$.

The Lie algebra $\mathfrak g$ of $G$ acts on $C^\infty(\Gamma\ba G/K,\chi)$ by
$$
(Y\cdot f)(x) =\left.\frac{d}{dt}\right|_{t=0} f(\exp(tY)x).
$$
This action induces a representation of the universal enveloping algebra $U(\mathfrak g)$ of $\mathfrak g$.
The \emph{Casimir element} $\Cas\in U(\mathfrak g)$ is given by $\Cas=\sum_i X_i^2\in U(\mathfrak g)$ where $X_1,\dots,X_n$ is any orthonormal basis of $\mathfrak g$.
The element $-\Cas$ induces a self-adjoint elliptic differential operator $\Delta_{M,\Gamma,\chi}$ of second degree on $C^\infty(\Gamma\ba G/K;\chi)$ and therefore on $\Gamma^\infty(E_{\Gamma,\chi})$.

\begin{definition}
The operator $\Delta_{M,\Gamma,\chi}$ acting on the smooth section on $E_{\Gamma,\chi}$ is called the \emph{$\chi$-twisted Laplace operator}.
When $\chi=1_\Gamma$ (the trivial representation of $\Gamma$), $\Delta_{M,\Gamma}:=\Delta_{M,\Gamma,\chi}$ is called the \emph{untwisted Laplace operator} or just the \emph{Laplace operator}.
\end{definition}

When no confusion can arise on $M$, we just write $\Delta_{\Gamma,\chi}$ and $\Delta_{\Gamma}$ in place of $\Delta_{M,\Gamma,\chi}$ and $\Delta_{M,\Gamma}$ respectively.

Our goal in this subsection is to describe $\Spec(\Gamma\ba M,\chi)$, the spectrum of $\Delta_{\Gamma,\chi}$.
This is given by the multiset of eigenvalues $0<\lambda_1<\lambda_2<\dots$ of $\Delta_{\Gamma,\chi}$ with their corresponding multiplicities $\mult_{\Delta_{\Gamma,\chi}}(\lambda_k)$ for $k\geq1$.

Let $\mathfrak t_\C$ be a Cartan subalgebra of $\mathfrak g_\C$ and fix a positive system $\Sigma^+(\mathfrak g_\C,\mathfrak t_\C)$ in the corresponding root system.
Let us denote by $\widehat G$ the unitary dual of $G$, that is, the class of (finite dimensional) unitary irreducible representations of $G$.
It is well known that the Casimir element $\Cas$ acts by a scalar on each $\pi\in\widehat G$ (see for instance \cite[Lemma~5.6.4]{Wallach-book}).
We denote by $\lambda(\Cas,\pi)$ the opposite of this scalar, so
\begin{equation}\label{eq2:lambda(C,pi)}
  \lambda(\Cas,\pi):= \langle\Lambda_\pi+\rho,\Lambda_\pi+\rho\rangle-\langle\rho,\rho\rangle,
\end{equation}
where $\Lambda_\pi$ is the highest weight of $\pi$ and $\rho$ denotes half the sum of the positive roots relative to $\Sigma^+(\mathfrak g_\C,\mathfrak t_\C)$.

\begin{notation}
For an arbitrary closed subgroup group $H$ of $G$ and $\tau_1,\tau_2$ representations of $H$, set $[\tau_1:\tau_2]=\dim \Hom_H(\tau_1,\tau_2)$.
For example: if $\tau_1$ is irreducible, then $[\tau_1:\tau_2]$ is the number of times that $\tau_1$ appears in the decomposition into irreducibles of $\tau_2$;
if $1_H$ denotes the trivial representation of $H$ and $\pi$ is a representation of $G$, then $[1_H:\pi|_H] = \dim V_\pi^H$, the dimension of the subspace of $H$-invariants in $V_\pi$.
For $\lambda\in\R$, we set
\begin{align*}
\widehat G_{H} &= \{\pi\in\widehat G: [1_H:\pi|_H]>0\},\\
\widehat G(\lambda) &= \{\pi\in\widehat G: \lambda(\Cas,\pi)=\lambda\},\\
\widehat G_{H}(\lambda) &= \widehat G_{H}\cap \widehat G(\lambda).
\end{align*}
The elements in $\widehat G_H$ are usually called the $H$-spherical representations of $G$.
\end{notation}

The next result describes the spectrum of $\Delta_{\Gamma,\chi}$ in terms of $\chi$-invariants of $K$-spherical representations of $G$ restricted to $\Gamma$.

\begin{theorem}\label{thm:spec_chi_gral}
Let $M=G/K$ be a compact homogeneous manifold with a Riemannian metric as above, let $\Gamma$ be a finite subgroup of $G$ and let $\chi$ be a finite dimensional representation of $\Gamma$.
The set of eigenvalues in $\Spec(\Gamma\ba M,\chi)$ is contained in
\begin{equation}\label{eq2:eigenvalues_gral}
  \mathcal E(M):=\{\lambda(\Cas,\pi): \pi\in\widehat G_{K}\}.
\end{equation}
Moreover, the multiplicity of $\lambda\in\mathcal E(M)$ in $\Spec(\Gamma\ba M,\chi)$ is given by
\begin{equation}\label{eq2:mult_gral}
\mult_{\Delta_{\Gamma,\chi}}(\lambda) := \sum_{\pi\in\widehat G_K(\lambda)} [\chi:\pi|_\Gamma][1_K:\pi|_K].
\end{equation}
In other words, the spectral zeta function of $\Delta_{\Gamma,\chi}$ is given by
\begin{equation}\label{eq2:zeta_gral}
\zeta_{M,\Gamma,\chi}(s) = \sum_{\pi\in \widehat G_K}  [\chi:\pi|_\Gamma]\,[1_K:\pi|_K] \;\lambda(\Cas,\pi)^{-s}.
\end{equation}
\end{theorem}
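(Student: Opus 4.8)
The plan is to use the Peter--Weyl theorem together with Frobenius reciprocity to decompose the relevant space of sections into $G$-isotypic components, and then to identify the Casimir eigenvalue on each component. First I would observe that the space $L^2(\Gamma\ba G/K;\chi)$ can be realized inside $L^2(G)\otimes W_\chi$ as the subspace of functions $f:G\to W_\chi$ satisfying the equivariance $f(\gamma x k)=\chi(\gamma)f(x)$ for all $\gamma\in\Gamma$ and $k\in K$. The Peter--Weyl theorem gives the Hilbert space decomposition
\begin{equation}
L^2(G)\simeq \bigoplus_{\pi\in\widehat G} V_\pi\otimes V_\pi^*,
\end{equation}
where $G$ acts by the left regular representation on the first factor $V_\pi$ and the right regular representation realizes $V_\pi^*$. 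The key step is to impose the two equivariance conditions separately: the right $K$-invariance picks out, in each summand, the subspace $V_\pi\otimes (V_\pi^*)^K$, which has dimension $[1_K:\pi|_K]$; and the left $\Gamma$-equivariance with respect to $\chi$ selects, inside $V_\pi\otimes W_\chi$ (viewing the left $V_\pi$ factor), the $\Gamma$-invariants of $V_\pi^*\otimes W_\chi$, whose dimension is $\dim\Hom_\Gamma(V_\pi,W_\chi)=[\chi:\pi|_\Gamma]$.

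Combining these, the $\pi$-isotypic component of $L^2(\Gamma\ba G/K;\chi)$ has dimension $[\chi:\pi|_\Gamma]\,[1_K:\pi|_K]$. Next I would invoke the fact, quoted in the excerpt from \cite[Lemma~5.6.4]{Wallach-book}, that $\Cas$ acts by the scalar $-\lambda(\Cas,\pi)$ on every vector of $V_\pi$, with $\lambda(\Cas,\pi)$ given by \eqref{eq2:lambda(C,pi)}. Since $\Delta_{\Gamma,\chi}$ is induced by $-\Cas$ and acts on the left regular representation factor, it acts by the scalar $\lambda(\Cas,\pi)$ on the entire $\pi$-isotypic component. Therefore each $\pi$ contributes an eigenvalue $\lambda(\Cas,\pi)$ with multiplicity $[\chi:\pi|_\Gamma]\,[1_K:\pi|_K]$; this contribution is nonzero precisely when $[1_K:\pi|_K]>0$, i.e.\ when $\pi\in\widehat G_K$, which yields the containment of the eigenvalue set in $\mathcal E(M)$ from \eqref{eq2:eigenvalues_gral}. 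Collecting all $\pi$ with a common value of $\lambda(\Cas,\pi)$ gives the multiplicity formula \eqref{eq2:mult_gral}, and summing $\lambda(\Cas,\pi)^{-s}$ over $\widehat G_K$ weighted by these dimensions yields the zeta function \eqref{eq2:zeta_gral}.

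The main obstacle I anticipate is bookkeeping the two one-sided equivariance conditions correctly against the left/right actions in the Peter--Weyl decomposition, in particular keeping track of duals and of which factor carries the $G$-action that $\Cas$ differentiates. Care is needed to verify that the $\chi$-twisted left $\Gamma$-equivariance produces exactly $[\chi:\pi|_\Gamma]$ and not $[\chi^*:\pi|_\Gamma]$ or a similar variant; this is a standard but error-prone Frobenius reciprocity computation, and once it is pinned down the remaining assertions follow formally from the self-adjointness and ellipticity of $\Delta_{\Gamma,\chi}$ (which guarantee a genuine discrete spectrum and an orthogonal eigenspace decomposition, so that the formal sum over $\widehat G_K$ indeed computes the spectrum).
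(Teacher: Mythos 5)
Your proposal is correct and takes essentially the same route as the paper's proof: Peter--Weyl decomposition of $L^2(G)$, imposing right $K$-invariance and $\chi$-twisted left $\Gamma$-equivariance to identify the $\pi$-isotypic component of dimension $[\chi:\pi|_\Gamma]\,[1_K:\pi|_K]$, and then letting the Casimir act by the scalar $\lambda(\Cas,\pi)$ on each component. The dual bookkeeping you flag as the main risk is precisely what the paper settles by passing from $\Hom_\Gamma(W_\chi,V_\pi^*)\otimes(V_\pi^*)^K$ to $\Hom_\Gamma(W_\chi,V_\pi)\otimes V_\pi^K$ via the reindexing $\pi\mapsto\pi^*$, which is harmless because $\lambda(\Cas,\pi)=\lambda(\Cas,\pi^*)$ and $[1_K:\pi|_K]=[1_K:\pi^*|_K]$.
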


\begin{proof}
This result is well known, we sketch the proof for completeness.
We consider, on the Hilbert space $L^2(G)$, the representation of $G\times G$ by $L_{g_1}\times R_{g_2}$ where $(L_g\cdot f)(x)=f(g^{-1}x)$ and $(R_g\cdot f)(x)=f(xg)$ for $f\in L^2(G)$.
The Peter-Weyl Theorem implies the decomposition
\begin{equation}\label{eq2:L^2(G)}
L^2(G) \simeq \bigoplus_{\pi\in\widehat G} V_\pi\otimes V_\pi^*
\end{equation}
as $G\times G$-modules.
Here, $v\otimes \varphi\in V_\pi\otimes V_\pi^*$ induces the function $f_{v\otimes \varphi}(x) := \varphi(\pi(x)^{-1}\cdot v)$, therefore the action of $G\times G$ on each component $V_\pi\otimes V_\pi^*$ is given by $({g_1},{g_2})\cdot (v\otimes \varphi)=(\pi(g_1)\cdot v)\otimes (\pi^*(g_2)\cdot \varphi)$.

The subspace of $L^2$-functions on $G$ invariant by $K$ on the right is
\begin{equation}\label{eq2:L^2(G/K)}
L^2(M)=L^2(G/K) \simeq \bigoplus_{\pi\in\widehat G} V_\pi\otimes (V_\pi^*)^K.
\end{equation}
On the other hand, we have the map $\Psi:C^\infty(G/K)\otimes W_\chi\to C^\infty(G/K,W_\chi):=\{\psi:G\to W_\chi\text{ smooth}: \psi(gk)=\psi(g) \;\forall\, g\in G,\,k\in K\}$, given by $\Psi(f,w)(x)=f(x) w$.
The group $\Gamma$ acts on $C^\infty(G/K)$ by the restriction of the action $L_g$ of $G$.
One can check that the map $\Psi$ sends the subspace of $\Gamma$-invariants elements $(C^\infty(G/K)\otimes W_\chi)^\Gamma$ onto $C^\infty(\Gamma\ba G/K;\chi)$.
Hence, $\Gamma^\infty(E_{\Gamma,\chi}) \simeq (C^\infty(G/K)\otimes W_\chi)^\Gamma$.
By taking the closure in the corresponding spaces, we obtain that
\begin{align}\label{eq2:L^2(E_chi)}
L^2(E_{\Gamma,\chi})&\simeq (L^2(G/K)\otimes W_\chi)^\Gamma
\simeq \bigoplus_{\pi\in\widehat G} (V_\pi\otimes W_\chi)^\Gamma\otimes (V_\pi^*)^K \\
&\simeq \bigoplus_{\pi\in\widehat G} \Hom_\Gamma(W_\chi,V_\pi^*)\otimes (V_\pi^*)^K \notag= \bigoplus_{\pi\in\widehat G_K} \Hom_\Gamma(W_\chi,V_\pi)\otimes V_\pi^K. \notag
\end{align}

The Casimir element $\Cas$ also acts on $L^2(G/K)$ leaving invariant each term in \eqref{eq2:L^2(G/K)}.
Moreover, this action commutes with the left action of $G$ on $C^\infty(G/K)$, in particular with elements in $\Gamma$, thus $\Delta_{\Gamma,\chi}$ acts on each term $\Hom_\Gamma(W_\chi,V_\pi)\otimes V_\pi^K$ by the scalar $\lambda(\Cas,\pi)$.
This proves that any eigenvalue of $\Delta_{\Gamma,\chi}$ is $\lambda(\Cas,\pi)$ for some $\pi\in\widehat G_K$.
Moreover, since $\dim\Hom_\Gamma(W_\chi,V_\pi)= [\chi:\pi|_\Gamma]$ and $\dim V_\pi^K=[1_K:\pi|_K]$, \eqref{eq2:L^2(E_chi)} implies the formula for $\mult_{\Delta_{\Gamma,\chi}}(\lambda)$.

The formula for the spectral zeta function follows immediately from the previous description.
This concludes the proof.
\end{proof}

The previous theorem says that in order to describe $\Spec(\Gamma\ba M,\chi)$ one has to calculate three ingredients:
\begin{itemize}
  \item $\widehat G_K$ (with the corresponding numbers $[1_K:\pi|_K]$ for $\pi\in\widehat G_K$);
  \item the numbers $[\chi:\pi|_\Gamma]$ for $\pi\in\widehat G_K$;
  \item the numbers $\lambda(\Cas,\pi)$ for $\pi\in\widehat G_K$.
\end{itemize}
The last one can be easily done by \eqref{eq2:lambda(C,pi)}.
The first one is the most complicated for arbitrary $(G,K)$, though it is well known in our particular cases of interest, namely, compact symmetric spaces of real rank one.
We next deal with the second one, under the assumption of $\Gamma$ is included in a maximal torus $T$ of $G$, taking in mind the situation of $\Gamma$ cyclic.

\subsection{Abelian fundamental group}\label{subsec:abelian}

We associate a maximal torus of $G$ to the Cartan subalgebra $\mathfrak t_\C$ of $\mathfrak g_\C$ as usual.
Let $\mathfrak t_\R=\{X\in\mathfrak t_\C: \alpha(X)\in\R\;\;\forall\,\alpha\in\Sigma(\mathfrak g_\C,\mathfrak t_\C)\}$, and let $T$ be the connected subgroup of $G$ with Lie algebra $\mathfrak t:=\mi \mathfrak t_\R$, thus $T$ is a maximal torus in $G$.
We denote by $P(\mathfrak g_\C)$ the set of weights of $\mathfrak g_\C$ (i.e.\ $\mu\in\mathfrak t_\C^*$ such that $2\langle\mu,\alpha\rangle/\langle\alpha,\alpha\rangle\in\Z$ for all $\alpha\in \Sigma(\mathfrak g_\C,\mathfrak t_\C)$); by $P(G)$ the subset of $\mu\in P(\mathfrak g_\C)$ such that there exists $\xi:T\to \C^\times$ with $\xi(\exp(X))=e^{\mu(X)}$ for all $X\in\mathfrak t$; by $P^{{+}{+}}(\mathfrak g)$ the dominant weights in $P(\mathfrak g)$ and $P^{{+}{+}}(G) = P(G)\cap P^{{+}{+}}(\mathfrak g)$.
For $g\in T$ and $\mu\in P(G)$ let $g^\mu=e^{\mu(X)}$ for any $X\in\mathfrak t$ satisfying $\exp X=g$.

We assume that $\Gamma$ is contained in $T$, thus $\Gamma$ is abelian, so its irreducible representations are one-dimensional (i.e.\ characters).

\begin{lemma}
Let $\Gamma$ be a finite subgroup of $T$.
If $\chi:\Gamma\to\C^\times$ is a morphism and $\pi$ is a finite dimensional representation of $G$, then $[\chi:\pi|_\Gamma]=\dim V_\pi^{\chi(\Gamma)}$, where
\begin{align}\label{eq3:V^chi}
V_\pi^{\chi(\Gamma)}&=\{v\in V_\pi: \pi(\gamma)\cdot v=\chi(\gamma)v\;\forall\,\gamma\in\Gamma\}.
\end{align}
\end{lemma}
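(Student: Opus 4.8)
The plan is to unwind the definition $[\chi:\pi|_\Gamma]=\dim\Hom_\Gamma(W_\chi,V_\pi)$ coming from the notation above and then identify this intertwiner space directly with the subspace $V_\pi^{\chi(\Gamma)}$. Since $\chi:\Gamma\to\C^\times$ is a character, its representation space $W_\chi$ is one-dimensional, and the entire argument reduces to tracking how $\Gamma$-equivariance constrains the image of a single spanning vector.

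Concretely, I would fix a nonzero $w_0\in W_\chi$ and consider the evaluation map $\mathrm{ev}\colon\Hom_\Gamma(W_\chi,V_\pi)\to V_\pi$ given by $\phi\mapsto\phi(w_0)$. Because $w_0$ spans $W_\chi$, a $\Gamma$-linear map $\phi$ is completely determined by $\phi(w_0)$, so $\mathrm{ev}$ is injective. To pin down its image, I would write out equivariance: combining $\phi(\gamma\cdot w_0)=\pi(\gamma)\phi(w_0)$ with $\gamma\cdot w_0=\chi(\gamma)w_0$ yields $\pi(\gamma)\phi(w_0)=\chi(\gamma)\phi(w_0)$ for all $\gamma\in\Gamma$, which is exactly the membership condition $\phi(w_0)\in V_\pi^{\chi(\Gamma)}$ appearing in \eqref{eq3:V^chi}. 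Conversely, any $v\in V_\pi^{\chi(\Gamma)}$ determines a well-defined $\Gamma$-linear map by $w_0\mapsto v$, by running the same computation backwards. Hence $\mathrm{ev}$ restricts to a linear isomorphism $\Hom_\Gamma(W_\chi,V_\pi)\cong V_\pi^{\chi(\Gamma)}$, and equating dimensions proves the lemma.

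There is no genuine obstacle here: the statement is the elementary identity ``multiplicity $=$ dimension of the $\Hom$-space'' specialized to a one-dimensional representation, where the $\Hom$-space visibly collapses to the $\chi$-isotypic subspace. The only points deserving a word of care are that $\chi$ is irreducible---automatic since $\Gamma\subseteq T$ is abelian and $\chi$ is a character---so that $[\chi:\pi|_\Gamma]$ legitimately records the multiplicity of $\chi$ in $\pi|_\Gamma$, and that finiteness of $\Gamma$ guarantees complete reducibility of $\pi|_\Gamma$, which is what makes this multiplicity coincide with the number of $\chi$-summands. Neither finiteness nor semisimplicity is needed for the $\Hom$-space identification itself, which is pure linear algebra.
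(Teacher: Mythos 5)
Your proof is correct, but it takes a genuinely different route from the paper's. You identify $\Hom_\Gamma(W_\chi,V_\pi)$ with $V_\pi^{\chi(\Gamma)}$ directly via the evaluation map $\phi\mapsto\phi(w_0)$, which is injective because $W_\chi$ is one-dimensional, and surjective onto $V_\pi^{\chi(\Gamma)}$ by the equivariance computation you give; this is pure linear algebra and, as you correctly note, uses neither the finiteness of $\Gamma$ nor the hypothesis $\Gamma\subset T$. The paper instead exploits $\Gamma\subset T$ from the outset: it decomposes $V_\pi$ into $T$-weight spaces $V_\pi(\mu)$, observes that each $V_\pi(\mu)$ is $\Gamma$-invariant and restricts to $\Gamma$ as $m_\pi(\mu)$ copies of the single character $\gamma\mapsto\gamma^\mu$, and concludes that $[\chi:\pi|_\Gamma]$ is the sum of $m_\pi(\mu)$ over those weights $\mu$ with $\gamma^\mu=\chi(\gamma)$ for all $\gamma\in\Gamma$ --- the same sum that computes $\dim V_\pi^{\chi(\Gamma)}$. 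Your argument buys generality and economy: it proves the identity whenever $\chi$ is one-dimensional, with no structure on $\Gamma$ at all. The paper's argument buys preparation: the weight-counting machinery it sets up is exactly what is reused immediately afterwards in Proposition~\ref{prop:dimV^chi}, where the same decomposition yields the formula $\dim V_\pi^{\chi(\Gamma)}=\sum_{\mu\in\mathcal L_{\Gamma,\chi}}m_\pi(\mu)$ on which the rest of the paper depends.
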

\begin{proof}
Since $\Gamma\subset T$, the weight space $V_\pi(\mu)=\{v\in V_\pi: \pi(g)\cdot v= g^\mu v\quad\forall g\in T\}$ is invariant by $\Gamma$ for any $\mu\in P(G)$.
Moreover, as a representation of $\Gamma$, $V_\pi(\mu)$ is equivalent to $m_\pi(\mu)$ copies of the character $\gamma\mapsto \gamma^\mu$, where $m_\pi(\mu):=\dim V_\pi(\mu)$ is the multiplicity of $\mu$ in $\pi$.
Hence, the subrepresentation $V_\pi(\mu)$ of $\Gamma$ contains $\chi$ if and only if $\chi(\gamma)=\gamma^\mu$ for every $\gamma\in\Gamma$, and in this case $[\chi:V_\pi(\mu)|_\Gamma]=m_\pi(\mu)$.
The proof follows since $V_\pi$ is a direct sum of its weight spaces.
\end{proof}

\begin{definition}\label{def:conglattice}
Let $\Gamma$ be a finite subgroup of $T$ and let $\chi:\Gamma\to\C^\times$ be a character.
We associate to $\Gamma$ and $\chi$ the \emph{affine congruence lattice}
\begin{align}\label{eq3:L_Gamma,chi}
\mathcal L_{\Gamma,\chi} &:= \{\mu\in P(G): \gamma^\mu=\chi(\gamma)\quad\forall\,\gamma\in\Gamma\}.
\end{align}
If $\chi=1_\Gamma$, we call it just the \emph{congruence lattice} associated to $\Gamma$ and denote it by $\mathcal L_\Gamma$.
\end{definition}

One can easily see that $\mathcal L_\Gamma$ is a sublattice of $P(G)$ and $\mathcal L_{\Gamma,\chi}$ is a shifted lattice of $\mathcal L_{\Gamma}$, that is, $\mathcal L_{\Gamma,\chi} = \mu+\mathcal L_{\Gamma}$ for any $\mu\in \mathcal L_{\Gamma,\chi}$.

\begin{proposition}\label{prop:dimV^chi}
Let $\Gamma$ be a finite subgroup of $T$ and let $\chi:\Gamma\to\C^\times$ be a character.
For each representation $\pi$ of $G$ we have that
$$
[\chi:\pi|_\Gamma]=\dim V_\pi^{\chi(\Gamma)} = \sum\limits_{\mu\in\mathcal L_{\Gamma,\chi}} m_\pi(\mu).
$$
\end{proposition}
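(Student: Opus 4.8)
The plan is to combine the preceding Lemma with the definition of the affine congruence lattice $\mathcal L_{\Gamma,\chi}$. The Lemma already supplies the first equality $[\chi:\pi|_\Gamma]=\dim V_\pi^{\chi(\Gamma)}$, so the only remaining task is to evaluate $\dim V_\pi^{\chi(\Gamma)}$ by means of the weight space decomposition of $\pi$.

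First I would recall that, since $\Gamma\subset T$, each weight space $V_\pi(\mu)$ is $\Gamma$-invariant and $\Gamma$ acts on it by the scalar $\gamma\mapsto\gamma^\mu$. Writing $V_\pi=\bigoplus_{\mu\in P(G)}V_\pi(\mu)$ and intersecting with the defining condition of $V_\pi^{\chi(\Gamma)}$, I obtain the compatible decomposition $V_\pi^{\chi(\Gamma)}=\bigoplus_{\mu}\bigl(V_\pi(\mu)\cap V_\pi^{\chi(\Gamma)}\bigr)$, where the $\chi$-isotypic subspace splits according to weights because each summand is preserved by $\Gamma$.

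Next, on the summand $V_\pi(\mu)$ a vector $v$ satisfies $\pi(\gamma)v=\chi(\gamma)v$ for all $\gamma\in\Gamma$ exactly when $\gamma^\mu=\chi(\gamma)$ holds for every $\gamma\in\Gamma$; by Definition~\ref{def:conglattice} this is precisely the condition $\mu\in\mathcal L_{\Gamma,\chi}$. Hence the summand contributes its full dimension $m_\pi(\mu)$ when $\mu\in\mathcal L_{\Gamma,\chi}$ and contributes $0$ otherwise. Summing the dimensions over all weights then yields $\dim V_\pi^{\chi(\Gamma)}=\sum_{\mu\in\mathcal L_{\Gamma,\chi}}m_\pi(\mu)$, as claimed.

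There is no genuine obstacle here: the statement is essentially a reorganization of the $\Gamma$-isotypic content of the weight space decomposition together with the membership criterion for $\mathcal L_{\Gamma,\chi}$. The single point deserving a word of care is the equivalence, for nonzero $v\in V_\pi(\mu)$, between the identity $\pi(\gamma)v=\chi(\gamma)v$ holding on all of $\Gamma$ and the character identity $\gamma^\mu=\chi(\gamma)$ holding identically on $\Gamma$, which is exactly what places $\mu$ in the affine congruence lattice.
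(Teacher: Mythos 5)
Your proof is correct and follows essentially the same route as the paper: both decompose $V_\pi$ into weight spaces, use the preceding Lemma for $[\chi:\pi|_\Gamma]=\dim V_\pi^{\chi(\Gamma)}$, observe that $\Gamma\subset T$ acts on $V_\pi(\mu)$ by the scalar character $\gamma\mapsto\gamma^\mu$, and conclude that $V_\pi(\mu)$ lies entirely inside or entirely outside $V_\pi^{\chi(\Gamma)}$ according to whether $\mu\in\mathcal L_{\Gamma,\chi}$. The only cosmetic difference is that where you invoke the general principle that the $\chi$-isotypic component splits across a $\Gamma$-invariant direct sum, the paper verifies this step by hand, writing $v=\sum_\mu v_\mu$ and comparing weight components under $\pi(\gamma)$.
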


\begin{proof}
We decompose $V_\pi=\oplus_{\mu\in P(G)} V_\pi(\mu)$ into weight spaces.
Let $v=\sum_{\mu\in P(G)}v_\mu \in V_\pi$ with $v_\mu\in V_\pi(\mu)$ for every $\mu\in P(G)$.
Clearly, $v_\mu\neq0$ for only finitely many $\mu$.
We claim that $v\in V_\pi^{\chi(\Gamma)}$ if and only if $v_\mu\in V_\pi^{\chi(\Gamma)}$ for every $\mu\in P(G)$.
Indeed, if $v\in V_\pi^{\chi(\Gamma)}$ then
\begin{align*}
\sum_{\mu\in P(G)} \chi(\gamma)\, v_\mu
=\chi(\gamma) \, v
&=\pi(\gamma)\cdot v
 = \sum_{\mu\in P(G)} \pi(\gamma)\cdot v_\mu
 = \sum_{\mu\in P(G)} \gamma^\mu \, v_\mu
\end{align*}
for every $\gamma\in\Gamma$, thus $\chi(\gamma)=\gamma^\mu$ for every $\mu\in P(G)$ such that $v_\mu\neq0$, hence $v_\mu\in V_\pi(\mu)^{\chi(\Gamma)}$ for every $\mu\in P(G)$.
The converse follows immediately.
We obtain that
\begin{equation*}\label{eq3:decomp-inv}
V_\pi^{\chi(\Gamma)} = \bigoplus_{\mu\in P(G)} V_\pi(\mu)^{\chi(\Gamma)}.
\end{equation*}

By \eqref{eq3:L_Gamma,chi},
$$
V_\pi(\mu)^{\chi(\Gamma)}
=\begin{cases}
V_\pi(\mu) &\quad\text{if $\mu\in\mathcal L_{\Gamma,\chi}$},\\
0 &\quad\text{if $\mu\not\in\mathcal L_{\Gamma,\chi}$}.
\end{cases}
$$
Hence
$$
\dim V_\pi^{\chi(\Gamma)}
= \sum_{\mu\in P(G)} \dim V_\pi(\mu)^{\chi(\Gamma)}
= \sum_{\mu\in \mathcal L_{\Gamma,\chi}} \dim V_\pi(\mu)
= \sum_{\mu\in \mathcal L_{\Gamma,\chi}} m_\pi(\mu)
$$
as asserted.
\end{proof}

\subsection{Cyclic subgroups of classical groups}\label{subsec:classicalgroups}
In this subsection we first fix the root system associated to each classical compact group.
The conventions used are the standard ones, so the reader can skip this part coming back to it for notation when necessary.
We also classify their cyclic subgroups up to conjugation.

\smallskip

\noindent
$\bullet$ \textsl{Type $A_n$}.
Here $G=\SU(n+1)$, $\mathfrak g=\su(n+1)$, $\mathfrak g_\C=\sll(n+1,\C)$,
\begin{align}\label{eq2:maxtorusA}
T&=\{\diag(e^{\mi\theta_1}, \dots,e^{\mi\theta_{n+1}}): \theta_i\in\R\;\forall\,i,\;\textstyle\sum_{i}\theta_i=0\}, \\
\label{eq2:cartansubalgA}
\mathfrak t_\C&=\left\{\diag\big(\theta_1,\dots,\theta_{n+1}\big) : \theta_i\in\C\;\forall\, i,\; \textstyle\sum_i\theta_i=0\right\},
\end{align}
$\varepsilon_i\big(\diag(\theta_1,\dots,\theta_{n+1})\big)=\theta_i$ for each $1\leq i\leq n+1$, $\Sigma^+(\mathfrak g_\C,\mathfrak t_\C)=\{\varepsilon_i-\varepsilon_j: i<j\}$, $\Pi(\mathfrak g_\C,\mathfrak t_\C) = \{\varepsilon_i-\varepsilon_{i+1}: 1\leq i\leq n\}$, $\rho=\sum_{i=1}^{n+1} \frac{n-2i+2}{2}\,\varepsilon_i$, and
\begin{align}\label{eq2:P(G)_G=SU(n+1)}
P(G) &= \left\{\textstyle\sum_{i} a_i\varepsilon_i \in \Z\varepsilon_1\oplus\dots\oplus\Z\varepsilon_{n+1}: \sum_i a_i=0\right\},\\
\label{eq2:P^++(G)_G=SU(n+1)}
P^{{+}{+}}(G) &= \left\{\textstyle\sum_{i}a_i\varepsilon_i \in P(G) :a_1\geq a_2\geq\dots\geq a_{n+1}\right\}.
\end{align}

\smallskip

\noindent
$\bullet$ \textsl{Type $B_n$}.
Here $G=\SO(2n+1)$, $\mathfrak g=\so(2n+1)$, $\mathfrak g_\C=\so(2n+1,\C)$,
\begin{align}\label{eq2:maxtorusB}
T&=
\left\{
\diag\left(
\left[\begin{smallmatrix}\cos(\theta_1)&\sin(\theta_1) \\ -\sin(\theta_1)&\cos(\theta_1)
\end{smallmatrix}\right]
,\dots,
\left[\begin{smallmatrix}\cos(\theta_n)&\sin(\theta_n) \\ -\sin(\theta_n)&\cos(\theta_n)
\end{smallmatrix}\right],1
\right)
:\theta_i\in\R\;\forall\,i
\right\},\\
\label{eq2:cartansubalgB}
\mathfrak t_\C&=
\left\{
\diag\left(
\left[\begin{smallmatrix}0&\theta_1\\ -\theta_1&0\end{smallmatrix}\right]
, \dots,
\left[\begin{smallmatrix}0&\theta_n\\ -\theta_n&0\end{smallmatrix}\right],0
\right):
\theta_i\in\C \;\forall\,i
\right\},
\end{align}
$
\varepsilon_i\big(\diag\left(
\left[\begin{smallmatrix}0&\theta_1\\ -\theta_1&0\end{smallmatrix}\right]
, \dots,
\left[\begin{smallmatrix}0&\theta_n\\ -\theta_n&0\end{smallmatrix}\right],0
\right)\big)=\theta_i
$
for each $1\leq i\leq n$, $\Sigma^+(\mathfrak g_\C,\mathfrak t_\C)=\{\varepsilon_i\pm\varepsilon_j: i<j\}\cup\{\varepsilon_i\}$, $\Pi(\mathfrak g_\C,\mathfrak t_\C) = \{\varepsilon_i-\varepsilon_{i+1}: 1\leq i\leq n-1\}\cup\{\varepsilon_{n}\}$, $\rho=\sum_{i=1}^{n} (n-\frac{2i-1}2)\,\varepsilon_i$,
$
P(G)=\Z\varepsilon_1\oplus\dots\oplus\Z\varepsilon_{n}
$, and
\begin{equation}\label{eq2:P^++(G)_G=SO(2n+1)}
P^{{+}{+}}(G)=\left\{\textstyle\sum_{i}a_i\varepsilon_i \in P(G) :a_1\geq a_{2}\geq \dots\geq a_{n}\geq0\right\}.
\end{equation}

\smallskip

\noindent
$\bullet$ \textsl{Type $C_n$}.
Here $G=\Sp(n,\C)\cap \U(2n)$ where $\Sp(n,\C) = \{g\in\SL(2n,\C): g^t J_ng=J_n:=\left(\begin{smallmatrix}0&\op{Id}_n\\ -\op{Id}_n&0\end{smallmatrix}\right)\}$,
$\mathfrak g=\mathfrak{sp}(n,\C)\cap \mathfrak{u}(2n)$, $\mathfrak g_\C=\mathfrak{sp}(2n,\C)$,
\begin{align}\label{eq2:maxtorusC}
T&=
\left\{
\diag\left(e^{\mi \theta_1},\dots, e^{\mi \theta_n},e^{-\mi \theta_1},\dots, e^{-\mi \theta_n}
\right)
:\theta_i\in\R\;\forall\,i
\right\},\\
\label{eq2:cartansubalgC}
\mathfrak t_\C &=
\left\{
\diag(\theta_1,\dots,\theta_n,-\theta_1,\dots,-\theta_n):
\theta_i\in\C \;\forall\,i
\right\},
\end{align}
$\varepsilon_i\big(\diag\left(\theta_1,\dots,\theta_n,-\theta_1,\dots,-\theta_n \right)\big) =\theta_i$ for each $1\leq i\leq n$, $\Sigma^+(\mathfrak g_\C,\mathfrak t_\C)=\{\varepsilon_i\pm\varepsilon_j: 1\leq i<j\leq n\}\cup\{2\varepsilon_i:1\leq i\leq n\}$, $\Pi(\mathfrak g_\C,\mathfrak t_\C) = \{\varepsilon_i-\varepsilon_{i+1}: 1\leq i\leq n-1\}\cup \{2\varepsilon_n\}$, $\rho=\sum_{i=1}^{n} (n-{i}+1)\,\varepsilon_i$, $P(G)=\Z\varepsilon_1\oplus\dots\oplus\Z\varepsilon_{n}$,
\begin{equation}\label{eq2:P^++(G)_G=Sp(n)}
P^{{+}{+}}(G)=\left\{\textstyle\sum_{i}a_i\varepsilon_i \in P(G) :a_1\geq \dots\geq a_{n-1}\geq a_{n}\geq0\right\}.
\end{equation}

\smallskip

\noindent
$\bullet$ \textsl{Type $D_n$}.
Here $G=\SO(2n)$, $\mathfrak g=\so(2n)$, $\mathfrak g_\C=\so(2n,\C)$,
\begin{align}\label{eq2:maxtorusD}
T&=
\left\{
\diag\left(
\left[\begin{smallmatrix}\cos(\theta_1)&\sin(\theta_1) \\ -\sin(\theta_1)&\cos(\theta_1)
\end{smallmatrix}\right]
,\dots,
\left[\begin{smallmatrix}\cos(\theta_n)&\sin(\theta_n) \\ -\sin(\theta_n)&\cos(\theta_n)
\end{smallmatrix}\right]
\right)
:\theta_i\in\R\;\forall\,i
\right\},\\
\label{eq2:cartansubalgD}
\mathfrak t_\C &=
\left\{
\diag\left(
\left[\begin{smallmatrix}0&\theta_1\\ -\theta_1&0\end{smallmatrix}\right]
, \dots,
\left[\begin{smallmatrix}0&\theta_n\\ -\theta_n&0\end{smallmatrix}\right]
\right):
\theta_i\in\C \;\forall\,i
\right\},
\end{align}
$
\varepsilon_i\big(\diag\left(
\left[\begin{smallmatrix}0&\theta_1\\ -\theta_1&0\end{smallmatrix}\right]
, \dots,
\left[\begin{smallmatrix}0&\theta_n\\ -\theta_n&0\end{smallmatrix}\right]
\right)\big)=\theta_i
$
for each $1\leq i\leq n$, $\Sigma^+(\mathfrak g_\C,\mathfrak t_\C)=\{\varepsilon_i\pm\varepsilon_j: i<j\}$, $\Pi(\mathfrak g_\C,\mathfrak t_\C) = \{\varepsilon_i-\varepsilon_{i+1}: 1\leq i\leq n-1\}\cup\{\varepsilon_{n-1}+\varepsilon_{n}\}$, $\rho=\sum_{i=1}^{n} (n-{i})\,\varepsilon_i$, $P(G)=\Z\varepsilon_1\oplus\dots\oplus\Z\varepsilon_{n}$,
\begin{equation}\label{eq2:P^++(G)_G=SO(2n)}
P^{{+}{+}}(G)=\left\{\textstyle\sum_{i}a_i\varepsilon_i \in P(G) :a_1\geq \dots\geq a_{n-1}\geq|a_{n}|\right\}.
\end{equation}

\medskip

We now classify the cyclic subgroups in the classical compact groups.
Let $G$ be either $\SU(n+1)$, $\SO(2n+1)$, $\Sp(n)$ or $\SO(2n)$, thus the maximal torus $T$ of $G$ fixed above has dimension $n$.
We consider a cyclic subgroup $\Gamma$ of $G$.
Since any element in $G$ is conjugate to an element in $T$, we can assume that $\Gamma$ is included in the maximal torus.
Note that for any closed subgroup $K$ of $G$, the locally homogeneous spaces $\Gamma \ba G/K$ and $(g^{-1}\Gamma g)\ba G/K$ are isometric for any $g\in G$.

For $q\in\N$ and $s=(s_1,\dots,s_n)\in\Z^n$, we set
\begin{equation}\label{eq2:gamma_q,s}
\gamma_{q,s} =
\begin{cases}
\diag(\xi_q^{s_1},\dots, \xi_q^{s_n},\xi_q^{-(s_1+\dots+s_n)})
    &\quad\text{if }G=\SU(n+1),\\[1mm]
\diag(R(\tfrac{2\pi s_1}q),\dots,R(\tfrac{2\pi s_n}q),1)
    &\quad\text{if }G=\SO(2n+1),\\[1mm]
\diag(\xi_q^{s_1},\dots, \xi_q^{s_n},\xi_q^{-s_1},\dots, \xi_q^{-s_n})
    &\quad\text{if }G=\Sp(n),\\[1mm]
\diag(R(\tfrac{2\pi s_1}q),\dots,R(\tfrac{2\pi s_n}q))
    &\quad\text{if }G=\SO(2n),
\end{cases}
\end{equation}
where $\xi_q=e^{2\pi\mi/q}$ and $R(\theta)=\left(\begin{smallmatrix}\cos\theta&\sin\theta\\ -\sin\theta&\cos\theta\end{smallmatrix}\right)$.

\begin{notation}\label{not:s_n+1}
For $s\in\Z^n$, write $s_{n+1}=-(s_1+\dots+s_n)$.
This notation simplifies the case $G=\SU(n+1)$.
The notation $\gcd(q,s)$ means $\gcd(q,s_1,\dots,s_{n+1})$ for $G=\SU(n+1)$ or $\gcd(q,s_1,\dots,s_{n})$ in the rest of the cases.
\end{notation}

Let $q\in\N$ and $s\in\Z^n$ satisfying $\gcd(q,s)=1$.
Clearly, $\gamma_{q,s}$ is in $T$ and has order $q$.
Moreover, any finite subgroup of $T$ of order $q$ is equal to $\Gamma_{q,s}:=\langle\gamma_{q,s}\rangle$ for some $s\in\Z^n$.
The matrix $\gamma_{q,s}$ has eigenvalues $\{\xi^{s_1},\dots,\xi^{s_{n+1}}\}$ for $G=\SU(n+1)$, $\{\xi_q^{\pm s_1},\dots,\xi_q^{\pm s_n},1\}$ for $G=\SO(2n+1)$, and $\{\xi_q^{\pm s_1},\dots,\xi_q^{\pm s_{n}}\}$ for $G=\Sp(n),\SO(2n)$.

When $G=\SO(2n)$ and $\gcd(s_j,q)=1$ for all $j$, the space $\Gamma_{q,s}\ba S^{2n-1}\simeq \Gamma_{q,s}\ba \SO(2n)/\SO(2n-1)$ is a \emph{lens space}.
It is denoted by $L(q;s_1,\dots,s_n)$ in Ikeda's papers and in \cite{LMR15-onenorm}.
Lens spaces are the only manifolds of constant sectional curvature with cyclic fundamental group.
If the condition $\gcd(s_j,q)=1$ does not hold for some $j$, then $\gamma_{q,s}^{q/\gcd(s_j,q)}$ is a non-trivial element in $\Gamma_{q,s}$ having the eigenvalue $1$, thus $\Gamma_{q,s}$ does not act freely on $S^{2n-1}$.
Hence, $\Gamma_{q,s}\ba S^{2n-1}$ is an orbifold with (non-trivial) singularities, so it is not a manifold.
Such orbifolds were called \emph{orbifold lens spaces} in \cite{Shams11}.

\begin{proposition}\label{prop:non-isomP^n(C)}
Let $G=\SU(n+1)$, $q\in\N$ and $s,s'\in\Z^n$ such that $\gcd(q,s)=\gcd(q,s')=1$.
The groups $\Gamma_{q,s}$ and $\Gamma_{q,s'}$ are conjugate in $G$ if and only if there exist $\sigma$ a permutation of $\{1,\dots,n+1\}$ and $\ell\in\Z$ coprime to $q$ satisfying
\begin{equation*}
s_{\sigma(j)}\equiv \ell s_j'\pmod{q} \qquad\text{for all }1\leq j\leq n+1.
\end{equation*}
\end{proposition}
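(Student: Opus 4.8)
The plan is to prove both implications directly, translating the group-theoretic statement into a statement about multisets of eigenvalues. Throughout I will use the convention $s_{n+1}=-(s_1+\dots+s_n)$, so that the eigenvalues of $\gamma_{q,s}$ are uniformly $\xi_q^{s_1},\dots,\xi_q^{s_{n+1}}$, and I will repeatedly use that conjugation in $G$ is an isomorphism of the relevant subgroups that preserves eigenvalue multisets.

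For the \emph{sufficiency}, suppose a permutation $\sigma$ of $\{1,\dots,n+1\}$ and $\ell\in\Z$ with $\gcd(\ell,q)=1$ satisfy $s_{\sigma(j)}\equiv \ell s_j'\pmod q$ for all $j$. Since $\gcd(\ell,q)=1$, the element $\gamma_{q,s'}^\ell$ generates $\Gamma_{q,s'}$, and by the congruences its diagonal entries are $\xi_q^{\ell s_j'}=\xi_q^{s_{\sigma(j)}}$, so that $\gamma_{q,s'}^\ell=\diag(\xi_q^{s_{\sigma(1)}},\dots,\xi_q^{s_{\sigma(n+1)}})$ is obtained from $\gamma_{q,s}$ by permuting its diagonal entries through $\sigma$. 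First I would conjugate $\gamma_{q,s}$ by the permutation matrix realizing $\sigma$; since this matrix lies only in $\Ot(n+1)$, I would then correct its determinant by multiplying by a diagonal unitary matrix $D$, which does not affect the conjugation because $D$ commutes with the diagonal matrix $\gamma_{q,s}$. This produces $P\in\SU(n+1)$ with $P\gamma_{q,s}P^{-1}=\gamma_{q,s'}^\ell$, whence $P\Gamma_{q,s}P^{-1}=\langle\gamma_{q,s'}^\ell\rangle=\Gamma_{q,s'}$.

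For the \emph{necessity}, suppose $g\Gamma_{q,s}g^{-1}=\Gamma_{q,s'}$ for some $g\in\SU(n+1)$. Conjugation by $g$ is a group isomorphism $\Gamma_{q,s}\to\Gamma_{q,s'}$, so it carries the generator $\gamma_{q,s}$ to a generator of the cyclic group $\Gamma_{q,s'}$ of order $q$; hence $g\gamma_{q,s}g^{-1}=\gamma_{q,s'}^\ell$ for some $\ell$ with $\gcd(\ell,q)=1$. Conjugate matrices share the same multiset of eigenvalues, so
\[
\{\xi_q^{s_1},\dots,\xi_q^{s_{n+1}}\}=\{\xi_q^{\ell s_1'},\dots,\xi_q^{\ell s_{n+1}'}\}
\]
as multisets. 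A bijection matching the two sides is precisely a permutation $\sigma$ of $\{1,\dots,n+1\}$ with $\xi_q^{s_{\sigma(j)}}=\xi_q^{\ell s_j'}$, that is, $s_{\sigma(j)}\equiv \ell s_j'\pmod q$, as required.

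The crux of the argument is the single step in the necessity direction where the \emph{same} scalar $\ell$ governs all $n+1$ coordinates: this is forced because $g\gamma_{q,s}g^{-1}$ is one group element $\gamma_{q,s'}^\ell$, not an independent reshuffling of eigenvalues. The only technical point I expect to require care is confirming that the conjugating matrix can be taken in $\SU(n+1)$ rather than merely in $\U(n+1)$; the determinant correction by a commuting diagonal matrix settles this without altering the conjugacy, and the constraint $\det=1$ is automatically consistent with the congruences since $\sum_i s_i\equiv 0$.
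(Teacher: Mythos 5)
Your proof is correct and follows essentially the same route as the paper: reduce conjugacy of the cyclic groups to conjugacy of $\gamma_{q,s}$ with some power $\gamma_{q,s'}^{\ell}$, $\gcd(\ell,q)=1$, and then characterize conjugacy of these torus elements by equality of eigenvalue multisets, which is exactly the stated congruence condition. The only difference is that you make explicit a point the paper leaves implicit — that the conjugating permutation matrix can be corrected by a commuting diagonal unitary so as to land in $\SU(n+1)$ rather than just $\U(n+1)$ — which is a worthwhile clarification but not a different argument.
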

\begin{proof}
The cyclic subgroups $\Gamma_{q,s}$ and $\Gamma_{q,s'}$ are conjugate in $G$ if and only if there exists an integer $\ell$ coprime to $q$ such that $\gamma_{q,s}$ and $\gamma_{q,s'}^\ell$ are conjugate in $G$.
This condition holds if and only if $\gamma_{q,s}$ and $\gamma_{q,s'}^\ell$ have the same multiset of eigenvalues, which is equivalent to $s_{\sigma(j)}\equiv \ell s_j'\pmod q$ for all $j$, for some permutation $\sigma$.
\end{proof}

\begin{proposition}\label{prop:non-isom}
Let $G$ be either $\SO(2n+1)$, $\Sp(n)$ or $\SO(2n)$.
Let $q\in\N$ and $s,s'\in\Z^n$ such that $\gcd(q,s)=\gcd(q,s')=1$.
The groups $\Gamma_{q,s}$ and $\Gamma_{q,s'}$ are conjugate in $G$ if and only if there are $\sigma$ a permutation of $\{1,\dots,n\}$, $\epsilon_1,\dots,\epsilon_n\in\{\pm1\}$, and $\ell\in\Z$ coprime to $q$ satisfying
\begin{equation*}
s_{\sigma(j)}\equiv \epsilon_j\ell s_j'\pmod{q} \qquad\text{for all }1\leq j\leq n.
\end{equation*}
\end{proposition}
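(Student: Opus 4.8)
The plan is to reduce the conjugacy of the cyclic \emph{groups} to a statement about single torus \emph{elements} and then read it off the Weyl group of each type. First I would observe, exactly as in the proof of Proposition~\ref{prop:non-isomP^n(C)}, that since $\Gamma_{q,s}$ and $\Gamma_{q,s'}$ are cyclic of the same order $q$, they are conjugate in $G$ if and only if some generator of the first is conjugate to a generator of the second; as the generators of $\Gamma_{q,s'}$ are precisely the powers $\gamma_{q,s'}^{\ell}$ with $\gcd(\ell,q)=1$, this amounts to $\gamma_{q,s}$ being $G$-conjugate to $\gamma_{q,s'}^{\ell}$ for some such $\ell$. Since both lie in the fixed maximal torus $T$ and $G$ is compact connected, two elements of $T$ are $G$-conjugate if and only if they lie in one orbit of the Weyl group $W=N_G(T)/T$. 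Thus the whole proposition reduces to translating ``$\gamma_{q,s}$ and $\gamma_{q,s'}^{\ell}$ are $W$-conjugate'' into the stated congruences, using that $\gamma_{q,s'}^{\ell}$ has parameters $(\ell s'_1,\dots,\ell s'_n)$ and the eigenvalue description recorded just after \eqref{eq2:gamma_q,s}.

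For $G=\SO(2n+1)$ (type $B_n$) and $G=\Sp(n)$ (type $C_n$) this translation is immediate and yields precisely the stated criterion. In both cases $W$ is the full hyperoctahedral group $\{\pm1\}^{n}\rtimes S_n$ of all signed permutations of $\varepsilon_1,\dots,\varepsilon_n$, acting on the parameter tuple by $(s_1,\dots,s_n)\mapsto(\epsilon_1 s_{\sigma(1)},\dots,\epsilon_n s_{\sigma(n)})$ with \emph{arbitrary} $\sigma\in S_n$ and \emph{arbitrary} $\epsilon_j\in\{\pm1\}$. Hence $\gamma_{q,s}$ and $\gamma_{q,s'}^{\ell}$ are $W$-conjugate exactly when $s_{\sigma(j)}\equiv\epsilon_j\ell s'_j\pmod q$ for some $\sigma$ and signs $\epsilon_j$, which is the desired equivalence. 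Equivalently, one may phrase this as equality of the eigenvalue multisets $\{\xi_q^{\pm s_j}\}$ and $\{\xi_q^{\pm\ell s'_j}\}$, the unordered pairs $\{\xi_q^{s_j},\xi_q^{-s_j}\}$ accounting for the free signs; the extra fixed eigenvalue $1$ in the $B_n$ case plays no role.

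The delicate case is $G=\SO(2n)$ (type $D_n$), and this is where I expect the real obstacle to lie. Here $W=W(D_n)=N_{\SO(2n)}(T)/T$ is only the index-two subgroup of $\{\pm1\}^n\rtimes S_n$ consisting of signed permutations with an \emph{even} number of sign changes, i.e.\ those with $\prod_j\epsilon_j=+1$. Running the same translation therefore yields, a priori, only the restricted criterion
\[
s_{\sigma(j)}\equiv\epsilon_j\,\ell\,s'_j\pmod q\quad(1\le j\le n),\qquad \textstyle\prod_j\epsilon_j=+1,
\]
rather than the unrestricted one in the statement. Indeed a single odd sign change genuinely need not be realizable inside $\SO(2n)$: conjugating $\diag(R(\theta_1),\dots,R(\theta_n))$ to $\diag(R(-\theta_1),R(\theta_2),\dots,R(\theta_n))$ forces an orthogonal matrix of determinant $-1$ whenever no block is central, so for generic parameters the two elements have equal eigenvalues yet are not $\SO(2n)$-conjugate. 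The main point to settle is therefore how the parity constraint $\prod_j\epsilon_j=+1$ is removed.

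The way I would remove it is to locate the missing odd sign change in the full isometry group $\Iso(S^{2n-1})=\Ot(2n)\supsetneq\SO(2n)$: the reflection $\diag(\op{Id}_{2n-1},-1)$ has determinant $-1$, normalizes $T$, and conjugates $R(\theta_n)$ to $R(-\theta_n)$ while fixing the other blocks, thereby inducing the single sign change $\varepsilon_n\mapsto-\varepsilon_n$. In $\Ot(2n)$ the torus elements are consequently conjugate exactly when their eigenvalue multisets agree, which is the unrestricted criterion. Since $\Gamma\ba S^{2n-1}$ and $(g^{-1}\Gamma g)\ba S^{2n-1}$ are isometric for every $g\in\Iso(S^{2n-1})$, these unrestricted congruences are precisely the condition for the two quotient orbifolds to be isometric, which is the geometric equivalence relevant throughout the paper. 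Thus for $B_n$ and $C_n$ the stated criterion captures honest $G$-conjugacy, whereas for $D_n$ it captures conjugacy in the isometry group (equivalently, isometry of the quotients); pinpointing this $D_n$ parity defect and showing that it is repaired exactly upon enlarging $\SO(2n)$ to $\Ot(2n)$ is the crux, and the step I expect to require the most care to state correctly.
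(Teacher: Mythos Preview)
Your reduction to single torus elements and then to Weyl orbits is the same device as the paper's reduction to eigenvalue multisets: for compact connected $G$, two elements of $T$ are $G$-conjugate precisely when they lie in one $W$-orbit, and for $B_n$ and $C_n$ (where $W$ is the full hyperoctahedral group) this is exactly equality of the multisets $\{\xi_q^{\pm s_j}\}$. So for $\SO(2n+1)$ and $\Sp(n)$ your argument and the paper's one-line proof coincide.

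For $\SO(2n)$ you have gone further than the paper and put your finger on a genuine gap. The paper's proof asserts that $\gamma_{q,s}$ and $\gamma_{q,s'}^{\ell}$ are conjugate in $G$ if and only if they have the same eigenvalues; this is false for $G=\SO(2n)$, exactly because $W(D_n)$ contains only the signed permutations with $\prod_j\epsilon_j=+1$. Concretely, for $n=2$, $q=8$, $s=(1,3)$, $s'=(1,5)$, the criterion in the statement is met (take $\ell=1$, $\sigma$ the identity, $\epsilon=(1,-1)$), yet for every $\ell\in\{1,3,5,7\}$ the tuple $(\ell,5\ell)\bmod 8$, namely $(1,5),(3,7),(5,1),(7,3)$, lies outside the $W(D_2)$-orbit $\{(1,3),(7,5),(3,1),(5,7)\}$ of $(1,3)$; hence $\Gamma_{8,(1,3)}$ and $\Gamma_{8,(1,5)}$ are not conjugate in $\SO(4)$, and the proposition as literally stated fails for $G=\SO(2n)$.

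Your resolution is the correct one and is what the paper actually needs: the unrestricted sign criterion characterizes conjugacy in $\Ot(2n)=\Iso(S^{2n-1})$, hence isometry of the quotient orbifolds, and every use of the proposition in the paper is to certify non-isometry of isospectral pairs. So your analysis is not merely an alternative route but a more careful one: it recovers the paper's argument for $B_n$ and $C_n$, and for $D_n$ it detects and correctly repairs an inaccuracy that the paper's short proof glosses over.
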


\begin{proof}
Similarly as above, $\Gamma_{q,s}$ and $\Gamma_{q,s'}$ are conjugate in $G$ if and only if $\gamma_{q,s}$ and $\gamma_{q,s'}^\ell$ have the same eigenvalues for some integer $\ell$ coprime to $q$, which is equivalent to
$$
\{\xi_q^{\pm s_1},\dots,\xi_q^{\pm s_{n}}\} = \{\xi_q^{\pm \ell s_1'},\dots,\xi_q^{\pm \ell s_{n}'}\}.
$$
The proof thus follows.
\end{proof}

\section{Description of spectra}
We mentioned at the end of \S\ref{subsec:spectra} that the most difficult ingredient to compute $\Spec(\Gamma\ba G/K,\chi)$ is the determination of the $K$-spherical representations $\widehat G_K$.
In \S\ref{subsec:symspaces} we consider compact symmetric spaces of real rank one because $\widehat G_K$ is very simple in these case.
We next describe the spectrum of their corresponding orbifolds in two ways.
First, we give a formula for the multiplicity of the $k$-th eigenvalue in \S\ref{subsec:multiplicities}, and secondly, we give in \S\ref{subsec:generatingfunc} a formula for their corresponding generating function.
We conclude the section by computing the generating functions for cyclic subgroups.

\subsection{Symmetric spaces of real rank one}\label{subsec:symspaces}
For a fuller treatment on symmetric spaces, including their classification, we refer the reader to \cite{Helgason-book-DiffGeomLieGrSymSpaces}.
See also \cite[Ch.\ V]{Helgason-GroupsandGeomAnal} for a description of the spectra of compact symmetric spaces.

Let $M$ be a compact symmetric space of real rank one.
Since we are interested in their quotients, we assume that $M$ is simply connected, so real projective spaces are excluded.
In what follows, we fix the realization of $M=G/K$ as:
\begin{equation}\label{eq3:symRrank1}
\begin{array}{c@{\qquad}c@{\qquad}c@{\qquad}c}
G&K&M&\dim M\\ \hline
\rule{0pt}{14pt}
\SU(n+1) & \op{S}(\U(n)\times\U(1)) & P^n(\C) &2n\\[1mm]
\SO(2n+1) & \SO(2n) &S^{2n}&2n\\[1mm]
\Sp(n) & \Sp(n-1)\times\Sp(1) & P^{n-1}(\Ha)&4n-4\\[1mm]
\SO(2n) & \SO(2n-1) &S^{2n-1}&2n-1\\[1mm]
\op{F}_4 & \Spin(9)& P^{2}(\mathbb O)&16
\end{array}.
\end{equation}
In all classical cases, we consider the Riemannian metric $g_{\textrm{can}}$ on $M$ induced by
\begin{align}\label{eq2:innerproduct}
\langle X,Y\rangle =\tfrac12\op{Tr}(X^*Y).
\end{align}
It is well known that $(S^n,g_{\mathrm{can}})$ has constant sectional curvature one.

Following the notation introduced in \S\ref{subsec:classicalgroups}, let $\Lambda_0\in P(G)$ given by
\begin{equation}
\Lambda_0 =
\begin{cases}
\varepsilon_1-\varepsilon_{n+1}
    &\text{for }G=\SU(n+1),\\
\varepsilon_1
    &\text{for }G=\SO(2n+1), \\
\varepsilon_1+\varepsilon_2
    &\text{for }G=\Sp(n),\\
\varepsilon_1
    &\text{for }G=\SO(2n),\\
\varepsilon_1
    &\text{for }G=\op{F}_4.
\end{cases}
\end{equation}
It is well known that
\begin{equation}\label{eq3:hatG_K_rankone}
\widehat G_{K}=\{\pi_{k\Lambda_0}:k\in\Z_{\geq0}\}.
\end{equation}
Moreover, $[1_K:\pi|_K]=1$ for every $\pi\in\widehat G_{K}$.
In order to write Theorem~\ref{thm:spec_chi_gral} in this particular case, let $\pi_k=\pi_{k\Lambda_0}$ and $\lambda_k=\lambda(\Cas,\pi_k)$, thus $\mathcal E(M)=\{\lambda_k:k\geq0\}$ where
\begin{equation}\label{eq3:lambda_k}
\lambda_k=
\begin{cases}
k(k+n)
    &\quad\text{if $M=P^n(\C)$,}\\
k(k+2n-1)
    &\quad\text{if $M=S^{2n}$,}\\
k(k+2n-1)
    &\quad\text{if $M=P^{n-1}(\Ha)$,}\\
k(k+2n-2)
    &\quad\text{if $M=S^{2n-1}$,}\\
k(k+11)
    &\quad\text{if $M=P^2(\mathbb O)$.}
\end{cases}
\end{equation}

\begin{theorem}\label{thm:spec_chi_rankone}
Let $M=G/K$ be a simply connected compact Riemannian symmetric space of real rank one, let $\Gamma$ be a finite subgroup of $G$ and let $\chi$ be a finite dimensional representation of $\Gamma$.
Then, every eigenvalue in $\Spec(\Gamma\ba M,\chi)$ is in $\{\lambda_k:k\geq0\}$ and $\mult_{\Delta_{M,\Gamma,\chi}}(\lambda_k) := [\chi:\pi_k|_\Gamma]$ for all $k\geq0$.
Equivalently,
\begin{equation}\label{eq2:zeta_rankone}
\zeta_{M,\Gamma,\chi}(s) = \sum_{k\geq0}  \,[\chi:\pi_k|_\Gamma]\;\lambda_k^{-s}.
\end{equation}
\end{theorem}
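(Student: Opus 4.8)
The plan is to derive this statement as a direct specialization of the general spectral description in Theorem~\ref{thm:spec_chi_gral}, feeding in the three structural facts that characterize the real rank one situation: the classification \eqref{eq3:hatG_K_rankone} of the $K$-spherical representations as $\widehat G_K=\{\pi_k:k\geq0\}$, the multiplicity-one property $[1_K:\pi_k|_K]=1$, and the explicit Casimir eigenvalues \eqref{eq3:lambda_k}. None of these needs to be reproved here, so the argument is essentially a bookkeeping collapse of the general formulas.

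First I would unwind the eigenvalue set. By \eqref{eq2:eigenvalues_gral} the candidate eigenvalues form $\mathcal E(M)=\{\lambda(\Cas,\pi):\pi\in\widehat G_K\}$, and substituting \eqref{eq3:hatG_K_rankone} this is exactly $\{\lambda(\Cas,\pi_k):k\geq0\}=\{\lambda_k:k\geq0\}$ with $\lambda_k$ as in \eqref{eq3:lambda_k}. Theorem~\ref{thm:spec_chi_gral} then already yields that every eigenvalue of $\Delta_{M,\Gamma,\chi}$ lies in $\{\lambda_k:k\geq0\}$, which is the first assertion. For the multiplicities I would start from \eqref{eq2:mult_gral}, namely $\mult_{\Delta_{\Gamma,\chi}}(\lambda_k)=\sum_{\pi\in\widehat G_K(\lambda_k)}[\chi:\pi|_\Gamma]\,[1_K:\pi|_K]$, and show that the indexing set $\widehat G_K(\lambda_k)$ reduces to the singleton $\{\pi_k\}$. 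The key observation is that in every case the sequence $(\lambda_k)_{k\geq0}$ of \eqref{eq3:lambda_k} has the form $k(k+c)$ with $c\geq0$, hence is strictly increasing and in particular injective in $k$. Since $\widehat G_K=\{\pi_j:j\geq0\}$, this injectivity forces $\widehat G_K(\lambda_k)=\{\pi_j:\lambda_j=\lambda_k\}=\{\pi_k\}$. The sum in \eqref{eq2:mult_gral} therefore collapses to the single term $[\chi:\pi_k|_\Gamma]\,[1_K:\pi_k|_K]$, and invoking $[1_K:\pi_k|_K]=1$ gives $\mult_{\Delta_{M,\Gamma,\chi}}(\lambda_k)=[\chi:\pi_k|_\Gamma]$, as claimed; the stated formula for $\zeta_{M,\Gamma,\chi}(s)$ is then just \eqref{eq2:zeta_gral} rewritten with these same substitutions.

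There is no genuine obstacle here beyond confirming the injectivity of $k\mapsto\lambda_k$, which is precisely what guarantees that distinct spherical representations never contribute to a common eigenvalue; this is immediate from the monotonicity of \eqref{eq3:lambda_k}. The only case worth a glance is $S^{2n-1}$ with $n=1$, where $c=0$ and $\lambda_k=k^2$, still strictly increasing for $k\geq0$. One should also note that when $[\chi:\pi_k|_\Gamma]=0$ the value $\lambda_k$ simply fails to be an eigenvalue, which is consistent with the multiplicity formula returning zero.
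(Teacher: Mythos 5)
Your proposal is correct and follows exactly the route the paper intends: the paper states Theorem~\ref{thm:spec_chi_rankone} without a separate proof, treating it as the specialization of Theorem~\ref{thm:spec_chi_gral} to the rank-one data \eqref{eq3:hatG_K_rankone}, $[1_K:\pi|_K]=1$, and \eqref{eq3:lambda_k}. Your explicit verification that $k\mapsto\lambda_k$ is injective (so that $\widehat G_K(\lambda_k)=\{\pi_k\}$ and the sum in \eqref{eq2:mult_gral} collapses to a single term) is precisely the detail the paper leaves implicit, and you handle it correctly.
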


\subsection{Multiplicity formula}\label{subsec:multiplicities}
By Theorem~\ref{thm:spec_chi_rankone}, the multiplicity $\mult_{\Delta_{M,\Gamma,\chi}}(\lambda_k)=[\chi:\pi_k|_\Gamma]$ is the only remaining ingredient for a given $(\Gamma,\chi)$ to describe $\Spec(\Gamma\ba M,\chi)$.
We will determine it under the assumption that the discrete subgroup $\Gamma$ is contained in the maximal torus of $G$, situation already considered in \S\ref{subsec:abelian}.

Let us denote by $T$ the maximal torus of $G$ fixed in \S\ref{subsec:classicalgroups}.
Assume that $\Gamma$ is a finite subgroup of $T$, thus $\mult_{\Delta_{M,\Gamma,\chi}}(\lambda_k) = [\chi,\pi_k|_\Gamma]$ by Theorem~\ref{thm:spec_chi_rankone}.
Proposition~\ref{prop:dimV^chi} now yields
\begin{equation}\label{eq3:mult_rank1}
\mult_{\Delta_{M,\Gamma,\chi}} (\lambda_k)=\sum_{\mu\in\mathcal L_{\Gamma,\chi}} m_{\pi_k}(\mu).
\end{equation}
The next goal is to calculate the multiplicities $m_{\pi_k}(\mu)$.
There exist several formulas for them, though there do not exist many `closed' formulas as we require.

As it is usual, for $\mu=\sum_i a_i\varepsilon_i\in P(G)$ we denote
\begin{align}
\norma{\mu}_1=\sum_i|a_i| \qquad \text{and}\qquad
\norma{\mu}_\infty=\max_i\, (|a_i|),
\end{align}
the \emph{one-norm} and the \emph{maximum norm} of $\mu$ respectively.
In the literature, $\norma{\cdot}_1$ is also known as \emph{$l^1$-norm} or \emph{taxicab norm}, and $\norma{\cdot}_\infty$ as \emph{$l^\infty$-norm} or \emph{uniform norm}.
We will usually work with $\norma{\cdot}_\infty$ only when $G=\Sp(2)$ and with $\norma{\cdot}_1$ when $G$ is either $\SU(n+1)$, $\SO(2n+1)$ or $\SO(2n)$ for $n\geq2$.
We will sometimes write $\norma{\cdot}$ for $\norma{\cdot}_1$ or $\norma{\cdot}_\infty$ depending on $G$ as above.
We will use the notation $\lfloor x\rfloor$ for the largest integer smaller than or equal to $x$.

\begin{proposition}\label{prop:multiplicities}
Let $k$ be a non-negative integer.
If $G=\SU(n+1)$ and $\mu=\sum_{i=1}^{n+1} a_i\varepsilon_i\in P(G)$, then
\begin{align}\label{eq3:multA}
m_{\pi_{k}}(\mu) =&
\begin{cases}
\displaystyle\binom{k-\frac12 \norma{\mu}_1+n-1}{n-1}
    & \text{ if }\, k-\frac12 \norma{\mu}_1\in \Z_{\geq0},\\[3mm]
0
 & \text{ otherwise.}
\end{cases}
\end{align}
If $G=\SO(2n+1)$ and $\mu=\sum_{i=1}^{n} a_i\varepsilon_i\in P(G)$, then
\begin{align}\label{eq3:multB}
m_{\pi_{k}}(\mu) =&
\begin{cases}
\displaystyle\binom{\lfloor\frac{1}{2}(k-\norma{\mu}_1)\rfloor+n-1}{n-1} & \text{ if }\, k-\norma{\mu}_1\in \Z_{\geq0},\\[3mm]
0&\text{ otherwise.}
\end{cases}
\end{align}
If $G=\Sp(2)$ and $\mu=\sum_{i=1}^{2} a_i\varepsilon_i\in P(G)$, then
\begin{align}\label{eq3:multC}
m_{\pi_{k}}(\mu) =&
\begin{cases}
\displaystyle\lfloor\tfrac{1}{2}(k-\norma{\mu}_\infty)\rfloor +1 & \text{ if }\, k-\norma{\mu}_\infty\in \Z_{\geq0},\text{ and }a_1+a_2\in2\Z, \\[3mm]
0&\text{ otherwise.}
\end{cases}
\end{align}
If $G=\SO(2n)$ and $\mu=\sum_{i=1}^{n} a_i\varepsilon_i\in P(G)$, then
\begin{align}\label{eq3:multD}
m_{\pi_{k}}(\mu) =&
\begin{cases}
\displaystyle\binom{\frac{1}{2}(k-\norma{\mu}_1)+n-2}{n-2} & \text{ if }\, k-\norma{\mu}_1\in 2\Z_{\geq0},\\[3mm]
0 & \text{ otherwise.}
\end{cases}
\end{align}
\end{proposition}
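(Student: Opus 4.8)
The plan is to realize each spherical representation $\pi_k=\pi_{k\Lambda_0}$ concretely as a space of harmonic polynomials, reduce the weight multiplicity $m_{\pi_k}(\mu)$ to an elementary lattice-point count in the full polynomial ring, and then extract $m_{\pi_k}(\mu)$ via the classical harmonic decomposition. The key point is that the count in the full polynomial ring depends on $\mu$ only through $\norma{\mu}_1$, which is exactly the geometric feature asserted by the proposition, and the harmonic decomposition preserves this dependence.

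First I would treat $G=\SO(2n)$, $\SO(2n+1)$ and $\SU(n+1)$ uniformly. For $M=S^{2n-1}$ (resp.\ $S^{2n}$) the representation $\pi_k$ of highest weight $k\varepsilon_1$ is the space $\mathcal H_k$ of harmonic homogeneous degree-$k$ polynomials on $\R^{2n}$ (resp.\ $\R^{2n+1}$), while for $M=P^n(\C)$ the representation $\pi_k$ of highest weight $k(\varepsilon_1-\varepsilon_{n+1})$ is the space $\mathcal H_{k,k}$ of harmonic polynomials of bidegree $(k,k)$ on $\C^{n+1}$. In complex coordinates adapted to $T$ (with one extra weight-zero real variable in the $\SO(2n+1)$ case), the monomial $z^a\bar z^b$ has weight $\sum_i(a_i-b_i)\varepsilon_i$; so for fixed $\mu=\sum_i c_i\varepsilon_i$ one counts monomials of the prescribed degree by setting $a_i+b_i=|c_i|+2t_i$ with $t_i\ge0$ and counting nonnegative integer solutions of a single linear equation in the $t_i$ (and the real exponent for $\SO(2n+1)$). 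This yields, for the full polynomial spaces,
\[
m_{\mathcal P_k}(\mu)=\binom{\tfrac12(k-\norma{\mu}_1)+n-1}{n-1},\quad
\binom{\lfloor\tfrac12(k-\norma{\mu}_1)\rfloor+n}{n},\quad
\binom{k-\tfrac12\norma{\mu}_1+n}{n}
\]
in the $\SO(2n)$, $\SO(2n+1)$, $\SU(n+1)$ cases respectively (and $0$ outside the stated range), each depending on $\mu$ only through $\norma{\mu}_1$.

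The decomposition $\mathcal P_k=\mathcal H_k\oplus r^2\mathcal P_{k-2}$ (resp.\ $\mathcal P_{k,k}=\mathcal H_{k,k}\oplus r^2\mathcal P_{k-1,k-1}$), in which $r^2=\sum_i z_i\bar z_i$ has weight zero and multiplication by $r^2$ is injective, gives $m_{\mathcal H_k}(\mu)=m_{\mathcal P_k}(\mu)-m_{\mathcal P_{k-2}}(\mu)$ (resp.\ with $k-1$). Substituting the formulas above and simplifying each difference with Pascal's rule $\binom{N}{r}-\binom{N-1}{r}=\binom{N-1}{r-1}$ collapses the expressions precisely to the binomial coefficients in \eqref{eq3:multD}, \eqref{eq3:multB} and \eqref{eq3:multA}; the boundary values $k-\norma{\mu}_1\in\{0,1\}$ are checked directly and agree. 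That $\pi_k$ is indeed the asserted (irreducible) harmonic space, with the stated highest weight, is standard and matches \eqref{eq3:hatG_K_rankone}.

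The case $G=\Sp(2)$, $M=P^1(\Ha)$, does not fit the harmonic-polynomial scheme directly and is the main obstacle; I would resolve it through the exceptional isomorphism $\mathfrak{sp}(2,\C)\cong\mathfrak{so}(5,\C)$ (geometrically, $P^1(\Ha)$ is the round $S^4$). The similarity $\psi$ of weight spaces determined by $\psi(\varepsilon_1^{C}+\varepsilon_2^{C})=\varepsilon_1^{B}$ and $\psi(\varepsilon_1^{C}-\varepsilon_2^{C})=\varepsilon_2^{B}$ carries the $C_2$ root system onto the $B_2$ root system and sends $k(\varepsilon_1+\varepsilon_2)$ to $k\varepsilon_1$, so $\pi_{k(\varepsilon_1+\varepsilon_2)}$ is identified with the degree-$k$ spherical harmonics on $S^4$ already treated in the $\SO(2n+1)$ case with $n=2$. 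Since $\psi(a_1\varepsilon_1^{C}+a_2\varepsilon_2^{C})=\tfrac{a_1+a_2}{2}\varepsilon_1^{B}+\tfrac{a_1-a_2}{2}\varepsilon_2^{B}$, one has $\norma{\psi(\mu)}_1=\tfrac12(|a_1+a_2|+|a_1-a_2|)=\max(|a_1|,|a_2|)=\norma{\mu}_\infty$, which turns the one-norm formula \eqref{eq3:multB} (with $n=2$) into the maximum-norm formula \eqref{eq3:multC}; moreover $\psi(\mu)$ lands in the $\SO(5)$ weight lattice only when $a_1+a_2\in2\Z$, which explains the parity condition, since weights with $a_1+a_2$ odd have half-integral image and cannot occur in the integral $\SO(5)$-representation. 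The main delicacy throughout is to keep the weight conventions of \S\ref{subsec:classicalgroups} consistent with the chosen complex structure, and to match the two $\varepsilon$-bases correctly under $\psi$.
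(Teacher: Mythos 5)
Your proposal is correct, but it takes a genuinely different route from the paper's proof. The paper handles three of the four cases by citation: \eqref{eq3:multD} is quoted from \cite[Lem.~3.6]{LMR15-onenorm}, \eqref{eq3:multB} is said to follow in the same way, and \eqref{eq3:multC} is read off from the closed formula of \cite[Thm.~4.1]{CaglieroTirao04} for weight multiplicities of $\Sp(2,\C)$; only the $\SU(n+1)$ case is argued directly, by identifying $m_{\pi_k}(\mu)$ with a Kostka number and counting semistandard tableaux on the Young diagram of $\Lambda_k=k\varepsilon_1-k\varepsilon_{n+1}$, with the final combinatorial step left to the reader. You instead give a self-contained, uniform argument: realize $\pi_k$ as harmonic polynomials (degree $k$ on $\R^{2n}$ and $\R^{2n+1}$, bidegree $(k,k)$ on $\C^{n+1}$, as in \cite{IkedaTaniguchi78}), count monomials of prescribed weight in the full polynomial ring by stars and bars --- which is exactly where the dependence on $\norma{\mu}_1$ alone comes from --- and pass to harmonics via the $T$-equivariant decomposition $\mathcal P_k=\mathcal H_k\oplus r^2\mathcal P_{k-2}$ (resp.\ $\mathcal P_{k,k}=\mathcal H_{k,k}\oplus r^2\mathcal P_{k-1,k-1}$) and Pascal's rule; I checked the three stars-and-bars counts, the Pascal collapses, and the boundary cases, and they are all as you state. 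Your treatment of $\Sp(2)$ via the root-system isomorphism $C_2\cong B_2$ (geometrically $P^1(\Ha)\cong S^4$) is also sound and arguably more illuminating than a citation: your $\psi$ is a genuine isomorphism of root systems sending $k(\varepsilon_1+\varepsilon_2)$ to $k\varepsilon_1$, the identity $\tfrac12(|a_1+a_2|+|a_1-a_2|)=\max(|a_1|,|a_2|)$ converts \eqref{eq3:multB} with $n=2$ into the maximum-norm formula \eqref{eq3:multC}, and the parity condition $a_1+a_2\in2\Z$ drops out because every weight of $\pi_{k(\varepsilon_1+\varepsilon_2)}$ lies in the $C_2$ root lattice $D_2$ (your half-integrality argument is exactly the image of this fact under $\psi$). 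What the paper's route buys is brevity, given the literature; what yours buys is independence from \cite{LMR15-onenorm} and \cite{CaglieroTirao04}, a single mechanism covering all four cases, and a conceptual explanation of why the relevant norm is $\norma{\cdot}_1$ in three cases but $\norma{\cdot}_\infty$ for $\Sp(2)$.
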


\begin{proof}
Formula \eqref{eq3:multD} has been already proved in \cite[Lem.~3.6]{LMR15-onenorm}, and \eqref{eq3:multB} follows in a very similar way.
Formula \eqref{eq3:multC} is a particular case of the general formula in \cite[Thm.~4.1]{CaglieroTirao04}.

We now suppose that $G=\SU(n+1)$.
Probably this case is already proved in the literature, but we could not find it stated in the form above.
We can assume that $\mu=\sum_{i=1}^{n+1}a_i\varepsilon_i$ is dominant, thus $a_1\geq a_2\geq \dots \geq a_n\geq a_{n+1}$ since the Weyl group in this case is the permutation group in $n+1$ letters, which preserves $\norma{\cdot}_1$.
Suppose that $r:=k-\frac12\norma{\mu}_1\geq0$, thus $\norma{\mu}_1\leq \norma{\Lambda}_1$.
The highest weight $\Lambda_k=k\varepsilon_1-k\varepsilon_{n+1}$ and the dominant weight $\mu$ have associated partitions
\begin{align*}
  \Lambda_k\quad \rightsquigarrow \quad
  k(n+1)&=2k+\underbrace{k+\dots+k}_{n-1}+0, \\
  \mu      \quad \rightsquigarrow \quad
  k(n+1)&=(a_1+k)+\dots+(a_{n+1}+k).
\end{align*}
Hence, $m_{\pi_{k}}(\mu)$ is equal to the number of ways one can fill the Young diagram associated to $\Lambda_k$ with $a_1+k$ $1$'s, $a_2+k$ $2$'s, $\dots$, $a_{n+1}+k$ $(n+1)$'s, in such a way that the entries in each row are non-decreasing and those in each column are strictly increasing (see for instance \cite[page 224]{FultonHarris-book}).
This number is known as the Kostka number of $\mu$ in $\Lambda_k$, that is, the number of semistandard tableaux on $\Lambda_k$ of type $\mu$.
The corresponding Young diagram to $\Lambda_k$ has $2k$ boxes in the first row and $k$ boxes in the next $n-1$ rows.
The remaining combinatorial computation is left to the reader.
\end{proof}

\begin{remark}
The previous lemma tell us that $m_{\pi_k}(\mu)$ depends only on $\norma{\mu}_1$ when $G$ is either $\SU(n+1)$, $\SO(2n+1)$ or $\SO(2n)$.
If $G=\Sp(2)$, by restricting $\mu$ to the lattice
\begin{equation}\label{eq:lattice-even}
D_2:=\{(a_1,a_2)\in\Z^2: a_1+a_2\in2\Z\},
\end{equation}
we obtain that $\mult_{\pi_k}(\mu)$ depends only on $\norma{\mu}_\infty$.

One can check that for $G=\Sp(n)$ with $n\geq3$ and $G=\textrm{F}_4$ the multiplicity formula $\mult_{\pi_k}(\mu)$ cannot be written as above.
An even more complicated situation appears when one considers a compact symmetric space of real rank greater than one since the set of spherical representations cannot be written as in \eqref{eq3:hatG_K_rankone}.
This reason forces us to restrict our attention to orbifolds covered by spheres, complex projective spaces and the quaternion projective line.
\end{remark}

For any $k\geq0$ and for any subset $\mathcal L$ in $P(G)$, let
\begin{equation}\label{eq3:N_L(k)}
N_{\mathcal L}(k) = \#\{\mu\in\mathcal L: \norma{\mu}=k\},
\end{equation}
where $\norma{\cdot}$ stands for $\norma{\cdot}_1$ in all cases except for $G=\Sp(2)$ where $\norma{\cdot}$ denotes $\norma{\cdot}_\infty$.

\begin{theorem}\label{thm:multiplicities}
Let $M=G/K$ be either $P^n(\C)$, $S^{2n}$, $P^{1}(\Ha)$ or $S^{2n-1}$ with $n\geq2$, realized as in \eqref{eq3:symRrank1}.
Let $T$ be the maximal torus in $G$ fixed in \S\ref{subsec:classicalgroups}.
Let $\Gamma$ be a (finite) subgroup of $T$, let $\chi:\Gamma\to\C^\times$ be a character, and let $\mathcal L_{\Gamma,\chi}$ be the associated affine congruence lattice defined in Definition~\ref{def:conglattice}.
Then $\mult_{\Delta_{M,\Gamma,\chi}} (\lambda_k)$ is given by
\begin{equation}\label{eq3:multformula}
\begin{cases}
\displaystyle
\sum_{r=0}^k \tbinom{r+n-1}{n-1} \, N_{\mathcal L_{\Gamma,\chi}}(2k-2r)
    &\quad\text{if $M=P^n(\C)$,}\\[5mm]
\displaystyle
\sum_{r=0}^{\lfloor k/2\rfloor} \tbinom{r+n-1}{n-1} \Big(N_{\mathcal L_{\Gamma,\chi}}(k-2r)+N_{\mathcal L}(k-1-2r)\Big)
    &\quad\text{if $M=S^{2n}$,}\\[5mm]
\displaystyle
\sum_{r=0}^{\lfloor k/2\rfloor} (r+1) \Big(N_{\mathcal L_{\Gamma,\chi}}(k-2r)+ N_{\mathcal L}(k-1-2r)\Big)
    &\quad\text{if $M=P^1(\Ha)$,}\\[5mm]
\displaystyle
\sum_{r=0}^{\lfloor k/2\rfloor} \tbinom{r+n-2}{n-2} N_{\mathcal L_{\Gamma,\chi}}(k-2r)
    &\quad\text{if $M=S^{2n-1}$.}
\end{cases}
\end{equation}
\end{theorem}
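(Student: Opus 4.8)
The plan is to combine the reduction \eqref{eq3:mult_rank1}, which expresses $\mult_{\Delta_{M,\Gamma,\chi}}(\lambda_k)$ as the sum $\sum_{\mu\in\mathcal L_{\Gamma,\chi}} m_{\pi_k}(\mu)$ of weight multiplicities over the affine congruence lattice, with the explicit multiplicity formulas of Proposition~\ref{prop:multiplicities}. The structural fact that makes everything work, recorded in the Remark following that proposition, is that $m_{\pi_k}(\mu)$ depends on $\mu$ only through its norm: through $\norma{\mu}_1$ when $G$ is $\SU(n+1)$, $\SO(2n+1)$ or $\SO(2n)$, and through $\norma{\mu}_\infty$ when $G=\Sp(2)$. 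This lets me collect equal terms by grouping the weights $\mu\in\mathcal L_{\Gamma,\chi}$ according to the value $j=\norma{\mu}$: each group contributes a common multiplicity times the count $N_{\mathcal L_{\Gamma,\chi}}(j)$ of \eqref{eq3:N_L(k)}. Writing $m_k(j)$ for the common value of $m_{\pi_k}(\mu)$ over weights of norm $j$, I obtain
\[
\mult_{\Delta_{M,\Gamma,\chi}}(\lambda_k) = \sum_{j\geq0} m_k(j)\, N_{\mathcal L_{\Gamma,\chi}}(j),
\]
with $m_k(j)$ read off directly from \eqref{eq3:multA}--\eqref{eq3:multD}.

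The remaining work is a case-by-case reindexing turning the sum over norm values $j$ into the stated sum over $r$. For $M=P^n(\C)$, formula \eqref{eq3:multA} gives $m_k(j)=\binom{k-\tfrac12 j+n-1}{n-1}$ when $\tfrac12 j\le k$ (and $j$ is automatically even, since $\sum_i a_i=0$ forces $\norma{\mu}_1$ even) and zero otherwise; the substitution $r=k-\tfrac12 j$, equivalently $j=2k-2r$ with $0\le r\le k$, yields the first line. For $M=S^{2n-1}$, formula \eqref{eq3:multD} gives a nonzero $m_k(j)=\binom{\tfrac12(k-j)+n-2}{n-2}$ exactly when $k-j\in2\Z_{\ge0}$; putting $r=\tfrac12(k-j)$, i.e.\ $j=k-2r$ with $0\le r\le\lfloor k/2\rfloor$, produces the last line at once.

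The slightly more delicate cases are $M=S^{2n}$ and $M=P^1(\Ha)$, where the floor in \eqref{eq3:multB} and \eqref{eq3:multC} makes the multiplicity constant on pairs of consecutive norm values: setting $r=\lfloor\tfrac12(k-j)\rfloor$, the equation $\lfloor\tfrac12(k-j)\rfloor=r$ holds precisely for the two values $j=k-2r$ and $j=k-1-2r$, which is exactly why each $r$-term carries the factor $N_{\mathcal L_{\Gamma,\chi}}(k-2r)+N_{\mathcal L_{\Gamma,\chi}}(k-1-2r)$. I expect this bookkeeping of the floor function — checking that the admissible $j$ are exactly these two values and that the range $0\le r\le\lfloor k/2\rfloor$ already captures every nonvanishing contribution, since $N_{\mathcal L_{\Gamma,\chi}}$ vanishes on negative arguments — to be the only point needing care. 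A final subtlety specific to $P^1(\Ha)$ is the extra parity condition $a_1+a_2\in2\Z$ in \eqref{eq3:multC}: every weight of $\pi_k$ lies in $k(\varepsilon_1+\varepsilon_2)+(\text{root lattice})\subseteq D_2$, so the multiplicity vanishes off $D_2$ and the sum effectively runs over $D_2\cap\mathcal L_{\Gamma,\chi}$; the counting function must therefore be taken for this sublattice, consistent with the appearance of $\vartheta_{D_2\cap\mathcal L_\Gamma}$ in \eqref{eq1:Fformula}.
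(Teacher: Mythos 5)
Your proof is correct and follows essentially the same route as the paper's own proof: reduce via \eqref{eq3:mult_rank1}, invoke the norm-only multiplicity formulas of Proposition~\ref{prop:multiplicities}, and collect weights of $\mathcal L_{\Gamma,\chi}$ by norm value, reindexing case by case. You are in fact slightly more explicit than the paper (which writes out only $P^n(\C)$ and $S^{2n}$ and dispatches the rest as "very similar"), notably in justifying the floor-function pairing $j=k-2r,\,k-1-2r$ and in spelling out the $D_2$-parity point for $P^1(\Ha)$, which makes the count run over $D_2\cap\mathcal L_{\Gamma,\chi}$ consistently with Theorem~\ref{thm:spectradescription}.
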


\begin{proof}
We first assume $M=P^n(\C)$.
From \eqref{eq3:multA} we have that $m_{\pi_k}(\mu)=0$ if $\norma{\mu}_1>2k$, thus
$$
\mult_{\Delta_{\Gamma,\chi}}(\lambda_k) = \sum_{r=0}^{\lfloor k/2\rfloor}\sum_{\mu\in \mathcal L_{\Gamma,\chi} \atop \norma{\mu}_1=2k-2r} m_{\pi_k}(\mu)
$$
by \eqref{eq3:mult_rank1} and the fact that $\norma{\mu}_1$ is always even.
Note that $m_{\pi_k}(\mu)=\binom{r+n-1}{n-1}$ by \eqref{eq3:multA} for any $\mu$ satisfying $\norma{\mu}_1=2k-2r$.
Since $\binom{r+n-1}{n-1}$ does not depend on $\mu$, the inner sum is equal to $\binom{r+n-1}{n-1}N_{\mathcal L_{\Gamma,\chi}}(2k-2r)$, which proves the formula in this case.

We next assume $M=S^{2n}$.
Again, combining \eqref{eq3:mult_rank1} and \eqref{eq3:multB} we obtain that
\begin{align*}
\mult_{\Delta_{\Gamma,\chi}}(\lambda_k)
&= \sum_{r=0}^{\lfloor k/2\rfloor} \sum_{\mu\in\mathcal L_{\Gamma,\chi}\atop \norma{\mu}=k-2r} \tbinom{r+n-1}{n-1}
 + \sum_{r=0}^{\lfloor (k-1)/2\rfloor} \sum_{\mu\in\mathcal L_{\Gamma,\chi}\atop \norma{\mu}=k-1-2r} \tbinom{r+n-1}{n-1},
\end{align*}
which establishes the formula.
The other cases follow in a very similar way.
\end{proof}

\begin{remark}
The formula for  $M=S^{2n-1}$ and $\chi$ trivial was already obtained in \cite[Thm.~3.5]{LMR15-onenorm}.
\end{remark}

\subsection{Generating functions}\label{subsec:generatingfunc}
The spectral zeta function is not very appropriate for explicit computations in our particular spaces.
See for instance \cite{CarlettiBragadin94a}, \cite{CarlettiBragadin94b}, \cite{Spreafico03}, \cite{Hattori11}, \cite{Teo14}, \cite{BauerFurutani08}, where many technical computations are made to determine its poles, residues and the determinant.
We introduce here a generating function encoding the spectrum of an orbifold covered by a complex projective space, a sphere or the quaternion projective line.
Generating functions were already used by Ikeda in several papers (\cite{Ikeda80_3-dimI}, \cite{Ikeda80_3-dimII}, \cite{Ikeda80_isosp-lens}, \cite{Ikeda83}, \cite{Ikeda88}, \cite{Ikeda97}, \cite{IkedaYamamoto79}), obtaining many examples of isospectral spherical space forms and also spectral rigidity results.

We define the \emph{spectral generating function} associated to $\Delta_{M,\Gamma,\chi}$ or to $\Spec(\Gamma\ba M,\chi)$, by
\begin{equation}\label{eq3:Fdef}
F_{\Delta_{M,\Gamma,\chi}}(z) := \sum_{k\geq0} \mult_{\Delta_{M,\Gamma,\chi}}(\lambda_k)\,z^k.
\end{equation}
We shall abbreviate $F_{{\Gamma,\chi}}(z)$ in place of $F_{\Delta_{M,\Gamma,\chi}}(z)$ when no confusion can arise.
We also write $F_{{\Gamma}}(z)$ when $\chi=1_\Gamma$.

For any subset $\mathcal L$ in $P(G)$, we define the \emph{generating theta function} associated to $\mathcal L$ by
\begin{equation}\label{eq3:vartheta_L}
\vartheta_{\mathcal L}(z) =
\begin{cases}
\displaystyle\sum_{k\geq0} N_{\mathcal L}(2k)\, z^k
    &\quad\text{if }G=\SU(n+1),\\[6mm]
\displaystyle\sum_{k\geq0} N_{\mathcal L}(k)\, z^k
    &\quad\text{if }G=\SO(2n+1),\Sp(2),\SO(2n).
\end{cases}
\end{equation}
The number $N_\mathcal L(k)$  was defined in \eqref{eq3:N_L(k)} and depends on the norm $\norma{\cdot}$ on $P(G)$.
Note that $\norma{\mu}_1$ is even for any $\mu\in P(G)$ when $G=\SU(n+1)$.

\begin{theorem}\label{thm:spectradescription}
Under the same hypotheses of Theorem~\ref{thm:multiplicities}, we have that
\begin{equation}\label{eq3:Fformula}
F_{\Gamma,\chi} (z)=
\begin{cases}
\dfrac{1}{(1-z)^{n}}\; \vartheta_{\mathcal L_{\Gamma,\chi}}(z)
    &\quad\text{if $M=P^n(\C)$,}\\[5mm]
\dfrac{1+z}{(1-z^2)^{n}}\; \vartheta_{\mathcal L_{\Gamma,\chi}}(z)
    &\quad\text{if $M=S^{2n}$,}\\[5mm]
\dfrac{1+z}{(1-z^2)^{2}}\; \vartheta_{D_2\cap\mathcal L_{\Gamma,\chi}}(z)
    &\quad\text{if $M=P^1(\Ha)$,}\\[5mm]
\dfrac{1}{(1-z^2)^{n-1}}\; \vartheta_{\mathcal L_{\Gamma,\chi}}(z)
    &\quad\text{if $M=S^{2n-1}$.}
\end{cases}
\end{equation}
\end{theorem}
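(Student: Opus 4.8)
The plan is to feed the explicit multiplicity formulas of Theorem~\ref{thm:multiplicities} into the definition \eqref{eq3:Fdef} of the generating function and to recognize each resulting double sum as a Cauchy product of two power series. The only ingredient needed beyond Theorem~\ref{thm:multiplicities} is the elementary negative binomial identity
\begin{equation*}
\sum_{r\geq0}\binom{r+d-1}{d-1}\,w^{r}=\frac{1}{(1-w)^{d}}\qquad(d\geq1),
\end{equation*}
read as an identity of formal power series.

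I would begin with the single-sum cases $M=P^n(\C)$ and $M=S^{2n-1}$, which are the cleanest. For $P^n(\C)$, substituting the first line of \eqref{eq3:multformula} shows that the coefficient of $z^{k}$ in $F_{\Gamma,\chi}(z)$ equals $\sum_{r=0}^{k}\binom{r+n-1}{n-1}N_{\mathcal L_{\Gamma,\chi}}(2k-2r)$; setting $b_{j}:=N_{\mathcal L_{\Gamma,\chi}}(2j)$, this is precisely the $z^{k}$-coefficient of $\big(\sum_{r\geq0}\binom{r+n-1}{n-1}z^{r}\big)\big(\sum_{j\geq0}b_{j}z^{j}\big)$. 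By the identity above the first factor is $(1-z)^{-n}$, and by the $\SU(n+1)$-line of \eqref{eq3:vartheta_L} the second factor is $\vartheta_{\mathcal L_{\Gamma,\chi}}(z)$, yielding the claimed formula. For $S^{2n-1}$ I would substitute the last line of \eqref{eq3:multformula} and reindex by $m=k-2r$ (so $k=m+2r$ with $m,r\geq0$), which decouples the double sum into $\big(\sum_{r\geq0}\binom{r+n-2}{n-2}z^{2r}\big)\,\vartheta_{\mathcal L_{\Gamma,\chi}}(z)=(1-z^{2})^{-(n-1)}\,\vartheta_{\mathcal L_{\Gamma,\chi}}(z)$.

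The two-term cases $M=S^{2n}$ and $M=P^{1}(\Ha)$ follow the same scheme, the only new feature being that each multiplicity is a sum of an $N(k-2r)$-term and a shifted $N(k-1-2r)$-term. Adopting the convention $N_{\mathcal L}(j)=0$ for $j<0$, the reindexing $m=k-2r$ sends the first term to $(1-z^{2})^{-n}\,\vartheta_{\mathcal L_{\Gamma,\chi}}(z)$, while $m=k-1-2r$ sends the second to $z\,(1-z^{2})^{-n}\,\vartheta_{\mathcal L_{\Gamma,\chi}}(z)$, and their sum is $\tfrac{1+z}{(1-z^{2})^{n}}\,\vartheta_{\mathcal L_{\Gamma,\chi}}(z)$. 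For $P^{1}(\Ha)$ one specializes $n=2$, uses $\binom{r+1}{1}=r+1$, and recalls from Proposition~\ref{prop:multiplicities} that for $\Sp(2)$ only weights with $a_{1}+a_{2}\in2\Z$ contribute; hence the relevant counting function is that of $D_{2}\cap\mathcal L_{\Gamma,\chi}$ and the theta function becomes $\vartheta_{D_{2}\cap\mathcal L_{\Gamma,\chi}}(z)$, giving the third formula.

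I expect the only delicate point to be the reindexing bookkeeping in the sphere and quaternionic cases: one must verify that replacing the truncated bound $\lfloor k/2\rfloor$ on the shifted term by the full range $r\geq0$ introduces no spurious contributions, which is guaranteed exactly by the vanishing convention $N_{\mathcal L}(j)=0$ for $j<0$, and that it is the unit shift in $N(k-1-2r)$ that produces the extra factor $z$, hence the numerator $1+z$. Beyond this, no genuine obstacle arises once the Cauchy-product structure is in place.
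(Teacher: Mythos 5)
Your proposal is correct and takes essentially the same approach as the paper: substitute the multiplicity formulas of Theorem~\ref{thm:multiplicities} into the definition \eqref{eq3:Fdef}, recognize the resulting double sums as Cauchy products, and evaluate the binomial factor via the negative binomial series $\sum_{r\geq0}\binom{r+d-1}{d-1}w^r=(1-w)^{-d}$. The paper writes out only the $P^n(\C)$ case and declares the remaining cases ``very similar''; your explicit handling of the reindexing $m=k-2r$, the unit shift producing the numerator $1+z$, and the restriction to $D_2\cap\mathcal L_{\Gamma,\chi}$ for $P^1(\Ha)$ just fills in those omitted details.
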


\begin{proof}
Assume $M=P^n(\C)$, thus $G=\SU(n+1)$.
Theorem~\ref{thm:multiplicities} implies that
\begin{align*}
F_{\Gamma,\chi}(z)
&= \sum_{k\geq0} \Big(\sum_{r=0}^{k} \tbinom{r+n-1}{n-1} N_{\mathcal L_{\Gamma,\chi}}(2k-2r)\Big)\;z^k\\
&= \Big(\sum_{k\geq0} \tbinom{k+n-1}{n-1}\, z^k\Big)\Big(\sum_{k\geq0} N_{\mathcal L_{\Gamma,\chi}}(2k)z^k\Big)\\
&= \frac{1}{(1-z)^{n}}\;\vartheta_{\mathcal L_{\Gamma,\chi}}(z).
\end{align*}

The rest of the cases are also a direct consequence of Theorem~\ref{thm:multiplicities} and follow in a very similar way.
\end{proof}

\subsection{Cyclic fundamental group case}\label{subsec:cyclic}
We next describe the associated affine congruence lattice $\mathcal L_{\Gamma,\chi}$ for $\Gamma=\Gamma_{q,s}$ as in \S\ref{subsec:classicalgroups}, for $M=P^n(\C),S^{2n},P^1(\Ha),S^{2n-1}$.
We also write the corresponding generating theta function as a rational function in the untwisted case (i.e.\ $\chi$ trivial).

Let $q\in\N$ and $s\in\Z^n$ such that $\gcd(q,s)=1$ (recall the convention in Notation~\ref{not:s_n+1}).
Since $\Gamma_{q,s}$ is cyclic and generated by $\gamma_{q,s}$ as in \eqref{eq2:gamma_q,s}, any character $\chi:\Gamma_{q,s}\to \C^\times$ is determined by the value of $\chi(\gamma_{q,s})$, which is a $q$-root of unity.
When $q$ and $s$ are fixed, for $u\in\Z$, we denote by $\chi_u$ the character of $\Gamma_{q,s}$ such that $\chi_u(\gamma_{q,s})=\xi_q^{u}$.
Clearly, $\{\chi_u:0\leq u<q\}$ is a representative set of $\widehat \Gamma_{q,s}$.

\begin{proposition}\label{prop:L_Gamma,chi_case-by-case}
Let $q\in\N$, $s\in\Z^n$ and $u\in\Z$ such that $\gcd(q,s)=1$.
Then
\begin{equation}
\mathcal L_{\Gamma_{q,s},\chi_{u}} =
\left\{\textstyle\sum_{i}^{} a_i\varepsilon_i\in P(G): \sum_i a_is_i\equiv u\pmod q\right\}.
\end{equation}
Here, the summation runs over $1\leq i\leq n+1$ for $G=\SU(n+1)$ and over $1\leq i\leq n$ for $G=\SO(2n+1),\Sp(n), \SO(2n)$.
\end{proposition}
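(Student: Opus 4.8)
The plan is to unwind the definition of the affine congruence lattice
\eqref{eq3:L_Gamma,chi} in the explicit coordinates fixed in
\S\ref{subsec:classicalgroups}. By Definition~\ref{def:conglattice},
$\mathcal L_{\Gamma_{q,s},\chi_u}$ consists of those $\mu\in P(G)$ with
$\gamma^\mu=\chi_u(\gamma)$ for all $\gamma\in\Gamma_{q,s}$. Since
$\Gamma_{q,s}=\langle\gamma_{q,s}\rangle$ is cyclic, this is equivalent to the
single condition $\gamma_{q,s}^{\,\mu}=\chi_u(\gamma_{q,s})=\xi_q^{\,u}$;
every other power $\gamma_{q,s}^m$ then satisfies the corresponding relation
automatically because both sides are multiplicative in $\gamma$. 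So the core
of the proof is to compute the character value $\gamma_{q,s}^{\,\mu}$ for a
weight $\mu=\sum_i a_i\varepsilon_i$.

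First I would recall that for $g\in T$ and $\mu\in P(G)$ one has
$g^\mu=e^{\mu(X)}$ for any $X\in\mathfrak t$ with $\exp X=g$, and that the
coordinate functionals $\varepsilon_i$ were defined in
\S\ref{subsec:classicalgroups} so as to read off exactly the exponent
appearing in the $i$-th diagonal block of $\gamma_{q,s}$ in
\eqref{eq2:gamma_q,s}. Concretely, in each type I would exhibit an
$X\in\mathfrak t$ with $\exp X=\gamma_{q,s}$ and evaluate $\varepsilon_i(X)$:
for $G=\SU(n+1)$ and $G=\Sp(n)$ the torus is written with entries
$e^{\mi\theta_j}$, so taking $\theta_j=2\pi s_j/q$ gives
$\gamma_{q,s}^{\,\varepsilon_i}=\xi_q^{\,s_i}$; for $G=\SO(2n+1)$ and
$G=\SO(2n)$ the rotation block $R(2\pi s_j/q)$ corresponds under
$\varepsilon_i$ to the same value $\xi_q^{\,s_i}$. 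In all cases one obtains
$\gamma_{q,s}^{\,\varepsilon_i}=\xi_q^{\,s_i}$, and hence by multiplicativity
\[
\gamma_{q,s}^{\,\mu}=\prod_i\big(\gamma_{q,s}^{\,\varepsilon_i}\big)^{a_i}
=\prod_i\xi_q^{\,a_i s_i}=\xi_q^{\,\sum_i a_i s_i}.
\]

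Having this, the defining condition
$\gamma_{q,s}^{\,\mu}=\xi_q^{\,u}$ becomes
$\xi_q^{\sum_i a_i s_i}=\xi_q^{\,u}$, which, since $\xi_q$ is a primitive
$q$-th root of unity, is exactly $\sum_i a_i s_i\equiv u\pmod q$. This yields
the claimed description of $\mathcal L_{\Gamma_{q,s},\chi_u}$. The range of
the summation index matches the convention in Notation~\ref{not:s_n+1}: for
$G=\SU(n+1)$ the index runs over $1\le i\le n+1$ with $s_{n+1}=-(s_1+\dots+s_n)$
built into $\gamma_{q,s}$, while in the other types it runs over
$1\le i\le n$.

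I expect the only delicate point to be the bookkeeping in type $A_n$: here
weights lie in the quotient $P(G)=\{\sum a_i\varepsilon_i:\sum a_i=0\}$, so one
must check that the exponent $\sum_{i=1}^{n+1}a_i s_i$ is well defined modulo
$q$ and that the $(n+1)$-st block, carrying $\xi_q^{-(s_1+\dots+s_n)}$, is
correctly absorbed by the convention $s_{n+1}=-(s_1+\dots+s_n)$. This is
routine once one observes that shifting all $s_i$ by a common constant changes
$\sum_i a_i s_i$ by a multiple of $\sum_i a_i=0$, so the congruence condition
is independent of the representative chosen for $s$; the remaining verification
across the four types is a direct, essentially identical, computation.
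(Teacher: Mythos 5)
Your proof is correct and takes essentially the same route as the paper's: reduce to the single generator $\gamma_{q,s}$ by cyclicity, exhibit a preimage of $\gamma_{q,s}$ under $\exp$ inside $\mathfrak t$, and evaluate $\mu=\sum_i a_i\varepsilon_i$ on it to conclude $\gamma_{q,s}^{\,\mu}=\xi_q^{\sum_i a_i s_i}$, whence the congruence condition. The paper simply writes out the explicit element $H_{q,s}\in\mathfrak t$ with $\exp(H_{q,s})=\gamma_{q,s}$ in each of the four types and computes $\mu(H_{q,s})$ in one stroke, which is the same computation you perform block by block via $\gamma_{q,s}^{\,\varepsilon_i}=\xi_q^{\,s_i}$ and multiplicativity.
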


\begin{proof}
By definition, $\mathcal L_{\Gamma_{q,s},\chi_u}=\{\mu\in P(G): \gamma_{q,s}^\mu = \xi_q^{u}\}$ since $\Gamma_{q,s}$ is generated by $\gamma_{q,s}$.
Let
\begin{equation*}\label{eq3:H_q,s}
H_{q,s} =
\begin{cases}
\diag(2\pi\mi s_1/q,\dots, 2\pi\mi s_{n+1}/q)
    &\quad\text{if }G=\SU(n+1),\\[1mm]
\diag\Big(\left(\begin{smallmatrix}0&2\pi s_1/q\\ -2\pi s_1/q&0\end{smallmatrix}\right),\dots,\left(\begin{smallmatrix}0&2\pi s_{n}/q\\ -2\pi s_{n}/q&0\end{smallmatrix}\right),0\Big)
    &\quad\text{if }G=\SO(2n+1),\\[1mm]
\diag(2\pi\mi s_1/q,\dots, 2\pi\mi s_{n}/q, -2\pi\mi s_1/q,\dots, -2\pi\mi s_{n}/q)
    &\quad\text{if }G=\Sp(n),\\[1mm]
\diag\Big(\left(\begin{smallmatrix}0&2\pi s_1/q\\ -2\pi s_1/q&0\end{smallmatrix}\right),\dots,\left(\begin{smallmatrix}0&2\pi s_{n}/q\\ -2\pi s_{n}/q&0\end{smallmatrix}\right)\Big)
    &\quad\text{if }G=\SO(2n),
\end{cases}
\end{equation*}
thus $\exp(H_{q,s})=\gamma_{q,s}$ in all cases.
Hence, $\gamma_{q,s}^\mu = e^{\mu(H_{q,s})}$ and, since
$$
\mu(H_{q,s}) = \sum_i a_i\varepsilon_i(H_{q,s}) = \sum_i a_i \frac{2\pi \mi s_i}{q} = \frac{2\pi\mi}{q}\sum_i a_is_i
$$
for $\mu=\sum_i a_i\varepsilon_i\in P(G)$, we obtain that $\gamma_{q,s}^\mu=\xi_q^u$ if and only if $\sum_i a_is_i\equiv u\pmod q$.
This completes the proof.
\end{proof}

\begin{notation}\label{not:L_q,s,u}
Let
$
A_n=\{(a_1,\dots,a_{n+1})\in\Z^{n+1}:a_1+\dots+a_{n+1}=0\}.
$
We will make use of the obvious correspondences $P(\SU(n+1))\simeq A_n$ and $P(G)\simeq \Z^{n}$ for $G=\SO(2n),\Sp(n),\SO(2n+1)$ given by $\sum_i a_i\varepsilon_i \leftrightarrow (a_1,a_2,\dots)$.
For $q\in\N$, $s\in\Z^n$ and $u\in\Z$ as above, let
\begin{align}\label{eq3:L_q,s}
\mathcal L_{q,s,u}^* &= \textstyle\left\{(a_1,\dots,a_{n+1})\in A_n: \sum\limits_{i=1}^{n+1} a_is_i\equiv u\pmod q\right\},\\
\mathcal L_{q,s,u}   &= \textstyle\left\{(a_1,\dots,a_{n})\in\Z^{n}: \sum\limits_{i=1}^n a_is_i \equiv u\pmod q\right\}.\notag
\end{align}
Thus, $\mathcal L_{\Gamma_{q,s},\chi_u}$ corresponds to $\mathcal L_{q,s,u}^*$ or to $\mathcal L_{q,s,u}$ depending on $G=\SU(n+1)$ or $G=\SO(2n+1),\Sp(n), \SO(2n)$ respectively.
We write $\mathcal L_{q,s}^*= \mathcal L_{q,s,0}^*$ and $\mathcal L_{q,s}=\mathcal L_{q,s,0}$.

One must be careful with the notation $\mathcal L_{q,s,u}$ since it works for $G=\SO(2n+1), \Sp(2),\SO(2n)$, but $\norma{\cdot}$ on $P(G)$ coincides with $\norma{\cdot}_1$ when $G=\SO(2n+1),\SO(2n)$ and with $\norma{\cdot}_\infty$ when $G=\Sp(2)$.
However, by Theorem~\ref{thm:spectradescription}, one can distinguish their corresponding generating theta function as $\vartheta_{\mathcal L_{q,s,u}}(z)$ for $G=\SO(2n+1),\SO(2n)$ and $\vartheta_{D_2\cap\mathcal L_{q,s,u}}(z)$ for $G=\Sp(2)$.
\end{notation}

Our next goal is, fixing $u=0$ (i.e.\ $\chi_u$ is the trivial character), to give formulas for the generating theta function associated to $\mathcal L_{q,s}^*$, $\mathcal L_{q,s}$ and $D_2\cap\mathcal L_{q,s}$.
Consequently we will obtain that the spectral generating function of an orbifold having cyclic fundamental group covered by complex projective spaces, spheres and the quaternion projective line is a rational function.

We start by writing the theta function as a rational function by using Ehrhart's theory for counting points in rational polytopes.
For more details on Ehrhart's theory see the nice book \cite{BeckRobins-book}.

\begin{theorem}\label{thm:Ehrhart}
Let $q\in\N$ and $s\in\Z^n$ such that $\gcd(q,s)=1$.
Let $\mathcal L$ be either $\mathcal L_{q,s}^*$, $\mathcal L_{q,s}$ for any $n\geq2$ or $D_2\cap\mathcal L_{q,s}$ for $n=2$.
Then, there exist a polynomial $p(z)$ of degree less than $(n+1)q$ such that
\begin{equation}
\vartheta_{\mathcal L}(z) = \frac{(1-z)\,p(z)}{(1-z^q)^{n+1}}.
\end{equation}
\end{theorem}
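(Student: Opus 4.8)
The plan is to write $\vartheta_{\mathcal L}(z)$ as $(1-z)$ times an Ehrhart series and then invoke the rational-generating-function form of Ehrhart quasi-polynomials. First I would introduce the cumulative counting function $C(k)=\#\{\mu\in\mathcal L:\norma{\mu}\le k\}$, together with $\tilde C(k)=\#\{\mu\in\mathcal L:\norma{\mu}_1\le 2k\}$ for $\SU(n+1)$ (where every weight has even one-norm). Since $N_{\mathcal L}(k)=C(k)-C(k-1)$, and likewise $N_{\mathcal L}(2k)=\tilde C(k)-\tilde C(k-1)$ for $\SU(n+1)$ because $N_{\mathcal L}(2k-1)=0$, summing by parts (with $C(-1)=0$) gives $\vartheta_{\mathcal L}(z)=(1-z)\sum_{k\ge0}C(k)\,z^k$, and the analogous identity with $\tilde C$ in the $\SU(n+1)$ case. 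This already exhibits the factor $(1-z)$ from the statement, so it remains to show that the Ehrhart series $\sum_k C(k)z^k$ equals $p(z)/(1-z^q)^{n+1}$ with $\deg p<(n+1)q$.

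Next I would recognize $C(k)$ as a genuine Ehrhart function: in each case $C(k)=\#(\mathcal L\cap kP)$ (and $\tilde C(k)=\#(\mathcal L^*\cap kP)$ for $\SU(n+1)$) for a fixed polytope $P$, namely the cross-polytope $\{\norma{x}_1\le1\}$ for $\SO(2n+1),\SO(2n)$; the polytope $\{x\in A_n\otimes\R:\norma{x}_1\le2\}$ for $\SU(n+1)$; and the cube $\{\norma{x}_\infty\le1\}$ for $\Sp(2)$. The decisive observation is that each $P$ is a \emph{lattice} polytope with respect to a reference lattice $\Lambda_0\supseteq\mathcal L$: its vertices are $\pm\varepsilon_i\in\Z^n$ (take $\Lambda_0=\Z^n$), the roots $\varepsilon_i-\varepsilon_j\in A_n$ (take $\Lambda_0=A_n$), and $(\pm1,\pm1)\in D_2$ (take $\Lambda_0=D_2$), respectively. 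A direct computation with the surjection $\Lambda_0\to\Z/q\Z$, $\mu\mapsto\sum_i a_is_i\bmod q$, shows that $[\Lambda_0:\mathcal L]$ divides $q$ in every case, using $\gcd(q,s)=1$. Choosing a basis identifying $\Lambda_0\cong\Z^{n}$ then turns $P$ into a rational polytope whose denominator divides $[\Lambda_0:\mathcal L]\mid q$, so by Ehrhart's theorem $C(k)$ is a quasi-polynomial in $k$ of degree $n$ and period dividing $q$.

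Finally I would apply the standard structure theorem: the generating function of a quasi-polynomial of degree $n$ and period dividing $q$ is $p(z)/(1-z^q)^{n+1}$ for a polynomial $p$ with $\deg p<(n+1)q$; both the exponent $n+1$ and the degree bound reflect the $(n+1)q$-dimensional space of such quasi-polynomials. Combining this with the first step yields $\vartheta_{\mathcal L}(z)=(1-z)\,p(z)/(1-z^q)^{n+1}$, as asserted.

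The main obstacle is the period estimate, i.e.\ guaranteeing that the period divides $q$ rather than a proper multiple. A naive count gives $[\Z^n:\mathcal L_{q,s}]=q$ for the $\SO$ cases, but $[\Z^2:D_2\cap\mathcal L_{q,s}]=2q$ when $q$ is odd, which would only produce the denominator $(1-z^{2q})^{3}=(1-z^q)^3(1+z^q)^3$ and hence break the claimed form. The resolution—the point I would be most careful about—is to count relative to $D_2$ rather than $\Z^2$: since the cube is already a $D_2$-lattice polytope and $[D_2:D_2\cap\mathcal L_{q,s}]\mid q$, the spurious factor $2$ is absorbed and the period genuinely divides $q$. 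One can sanity-check this on $q=1$, where $\vartheta_{D_2}(z)=(1+z)^2/(1-z)^2$, and on small odd $q$, where the counts indeed assemble into a period-$q$ quasi-polynomial.
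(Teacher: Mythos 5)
Your proposal is correct and follows essentially the same route as the paper: writing $\vartheta_{\mathcal L}(z)=(1-z)\sum_{k\geq0}\#(\mathcal L\cap k\mathcal B)\,z^k$ for the appropriate cross-polytope, simplex slice, or cube $\mathcal B$, and then invoking Ehrhart's theorem for rational polytopes to get the denominator $(1-z^q)^{n+1}$ with $\deg p<(n+1)q$. Your extra care about the period dividing $q$ — in particular counting the cube relative to $D_2$ rather than $\Z^2$ — is a worthwhile explicit verification of a point the paper's proof leaves implicit in its appeal to the Ehrhart theorem, but it is the same argument, not a different one.
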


\begin{proof}
Let
\begin{align*}
\mathcal B &=
\begin{cases}
\left\{x\in\R^{n+1}: \norma{x}_1\leq 2,\; \textstyle\sum_{i=1}^{n+1}x_i=0\right\}
    & \quad\text{if }\mathcal L=\mathcal L_{q,s}^*,\\
\{x\in\R^{n}: \norma{x}_1\leq 1\}
    & \quad\text{if }\mathcal L=\mathcal L_{q,s},\\
\{x\in\R^{2}: \norma{x}_\infty\leq 1\}
    & \quad\text{if }\mathcal L=D_2\cap\mathcal L_{q,s}.
\end{cases}
\end{align*}
Note that $\mathcal B$ is a \emph{cross-polytope} in $\R^n$ in the second case and a cube in $\R^2$ in the third case, both centered at the origin.
Clearly, $N_{\mathcal L}(k)$ is the number of points in $\mathcal L$ at the $k^{\textrm{th}}$ dilate of the  boundary of $\mathcal B$, that is, $N_{\mathcal L}(k) = \#(\mathcal L\cap k\partial\mathcal B)$.
Hence
\begin{equation*}
\vartheta_{\mathcal L}(z) = \sum_{k\geq0}\#(\mathcal L\cap k\partial\mathcal B)\,z^k  = (1-z)\sum_{k\geq0}\#(\mathcal L\cap k\mathcal B)\,z^k.
\end{equation*}

Let $\Psi$ be a linear transformation sending the canonical basis in $\R^n$ to a basis of $\mathcal L$.
Let $\mathcal P=\Psi(\mathcal B)$, hence
$$
\vartheta_{\mathcal L}(z) = (1-z) \sum_{k\geq0}\#(\Z^n\cap k\mathcal P)\,z^k.
$$
The function $\op{Ehr}_{\mathcal P}(z) := \sum_{k\geq0}\#(\Z^n\cap k\mathcal P)z^k$ is called the \emph{Ehrhart series} of $\mathcal P$.
By the Ehrhart Theorem for rational polytopes (see \cite[\S3.7]{BeckRobins-book}), there exists a polynomial $p(z)$ of degree less than $q(n+1)$ such that $\op{Ehr}_{\mathcal P}(z) = {p(z)}/{(1-z^q)^{n+1}}$, which establishes the formula.
\end{proof}

The above theorem implies that the map $\Phi(k):= \sum_{m=0}^k N_{\mathcal L}(m)$, which sends $k$ to the $k^{\textrm{th}}$ coefficient of the series $\vartheta_{\mathcal L}(z)/(1-z)$, is a quasipolynomial on $k$ of degree $n$ and its period divides $q$ (see \cite[Thm.~3.23]{BeckRobins-book}).
The next remark gives the coefficients of $p(z)$ in terms of the coefficients of $\vartheta_{\mathcal L}(z)$, which is useful for explicit computations.

\begin{remark}\label{rem:coef_g(z)}
Write $p(z)=\sum_{j=0}^{q(n+1)-1} h_j z^j$ the polynomial in the proof of Theorem~\ref{thm:Ehrhart}.
By expanding the expression $p(z)= (1-z)^{-1}(1-z^q)^{n+1}\vartheta_{\mathcal L}(z)$ we obtain, for $0\leq k_0<q$ and $0\leq l_0\leq n$, that
\begin{equation*}
h_{k_0+l_0q} = \sum_{j=0}^{l_0} (-1)^j\binom{n+1}{j}\sum_{m=0}^{k_0+q(l_0-j)} N_{\mathcal L}(m).
\end{equation*}
Note that $\sum_{m=0}^{k} N_{\mathcal L}(m)=\#(\Z^n\cap k\mathcal P)$, the $k$-th coefficient of the Ehrhart series $\op{Ehr}_{\mathcal P}(z)$ in the proof of Theorem~\ref{thm:Ehrhart}.
\end{remark}

\begin{remark}\label{rem:q_spectrainvariant}
Theorem~\ref{thm:Ehrhart} shows that the order of the fundamental group $q$ is a spectral invariant.
That is, if two orbifolds with cyclic fundamental group, both covered by $P^n(\C)$, $S^{2n}$, $P^1(\Ha)$ or $S^{2n-1}$, are untwisted isospectral, then their fundamental groups have the same order.
\end{remark}

We next show an alternative way to compute $\vartheta_{\mathcal L_{q,s}}(z)$.

\begin{proposition}\label{prop:Zagier}
Let $q\in\N$ and $s\in\Z^n$ such that $\gcd(q,s)=1$. Then
$$
\vartheta_{\mathcal L_{q,s}}(z) = \frac{(1-z^2)^n}{q}\sum_{l=0}^{q-1} \prod_{j=1}^n \frac{1}{ (z-\xi_q^{ls_j})(z-\xi_q^{-ls_j})}.
$$
\end{proposition}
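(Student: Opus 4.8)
The goal is to compute the generating function
\[
\vartheta_{\mathcal L_{q,s}}(z) = \sum_{k\geq 0} N_{\mathcal L_{q,s}}(k)\, z^k,
\]
where $N_{\mathcal L_{q,s}}(k) = \#\{a\in\Z^n: \norma{a}_1=k,\ \sum_j a_js_j\equiv 0 \pmod q\}$, and exhibit it as the stated finite sum over roots of unity. The natural approach is to start from the \emph{unrestricted} one-norm generating function and then impose the congruence condition via the standard orthogonality of characters of $\Z/q\Z$. First I would recall the one-variable identity
\[
\sum_{a\in\Z} w^{|a|}\, t^{a} = \frac{1-w^2}{(1-wt)(1-wt^{-1})},
\]
valid as a formal/analytic identity for $|w|<1$. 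Taking a product over the $n$ coordinates with $t$ replaced by $\xi_q^{l s_j}$ packages both the weight $z^{\norma{a}_1}$ (through $w=z$) and a character value $\xi_q^{l\sum_j a_j s_j}$ into a single closed form.

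\textbf{Key steps.} The main computation runs as follows. By the orthogonality relation
\[
\frac{1}{q}\sum_{l=0}^{q-1} \xi_q^{\, l m} = \begin{cases} 1 & \text{if } m\equiv 0\pmod q,\\ 0 & \text{otherwise,}\end{cases}
\]
applied to $m=\sum_j a_j s_j$, I can write
\[
N_{\mathcal L_{q,s}}(k) = \sum_{\substack{a\in\Z^n\\ \norma{a}_1=k}} \frac{1}{q}\sum_{l=0}^{q-1} \xi_q^{\, l\sum_j a_j s_j}.
\]
Multiplying by $z^k$, summing over $k\geq 0$, and interchanging the (finite inner and absolutely convergent outer) sums gives
\[
\vartheta_{\mathcal L_{q,s}}(z) = \frac{1}{q}\sum_{l=0}^{q-1} \sum_{a\in\Z^n} z^{\norma{a}_1}\, \xi_q^{\, l\sum_j a_j s_j}
= \frac{1}{q}\sum_{l=0}^{q-1} \prod_{j=1}^n \Big(\sum_{a_j\in\Z} z^{|a_j|}\,\xi_q^{\, l a_j s_j}\Big).
\]
Each inner factor is evaluated by the one-variable identity with $w=z$ and $t=\xi_q^{l s_j}$, yielding $\dfrac{1-z^2}{(1-z\xi_q^{ls_j})(1-z\xi_q^{-ls_j})}$. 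Pulling out the $n$ factors of $(1-z^2)$ and rewriting $(1-z\xi_q^{\pm ls_j}) = -\xi_q^{\pm ls_j}(z-\xi_q^{\mp ls_j})$ (the phase factors canceling in the product of the two) produces exactly $\prod_{j=1}^n \frac{1}{(z-\xi_q^{ls_j})(z-\xi_q^{-ls_j})}$ up to the overall $(1-z^2)^n$, giving the claimed formula.

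\textbf{Main obstacle.} The steps are individually routine, so the only real care needed is \emph{convergence and formal manipulation}: the geometric-type series $\sum_{a\in\Z} z^{|a|} t^a$ converges only for $|z|<1$, and the interchange of summation order must be justified (absolute convergence, since $\sum_a |z|^{\norma{a}_1}<\infty$ when $|z|<1$). I would therefore phrase the computation as an identity of analytic functions on the disk $|z|<1$ and note that both sides are rational in $z$, so the equality of power-series coefficients follows by analytic continuation; this also confirms consistency with the rational form already established in Theorem~\ref{thm:Ehrhart}. A secondary bookkeeping point is the algebraic rearrangement from $(1-z\xi_q^{ls_j})(1-z\xi_q^{-ls_j})$ to $(z-\xi_q^{ls_j})(z-\xi_q^{-ls_j})$: one must check that the product of all the $-\xi_q^{\pm ls_j}$ prefactors equals $1$, which holds because they pair up as $(-\xi_q^{ls_j})(-\xi_q^{-ls_j})=1$ for each $j$.
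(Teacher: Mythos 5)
Your proposal is correct and follows essentially the same route as the paper's proof: orthogonality of characters of $\Z/q\Z$ to encode the congruence condition, interchange of summations, factorization into a product of one-dimensional series $\sum_{a\in\Z}z^{|a|}\xi_q^{las_j}$, and evaluation of each factor as $(1-z^2)(1-z\xi_q^{ls_j})^{-1}(1-z\xi_q^{-ls_j})^{-1}$. The only additions are your explicit convergence/analytic-continuation remarks and the verification that the phase factors cancel when passing to $(z-\xi_q^{\pm ls_j})$, details the paper leaves implicit.
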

\begin{proof}
We have
\begin{equation*}
\vartheta_{\mathcal L_{q,s}}(z)
    = \sum_{\mu\in\mathcal L_{q,s}} z^{\norma{\mu}_1}
    =\sum_{(a_1,\dots,a_n)\in\Z^n} \left(\frac1q\sum_{l=0}^{q-1} \xi_q^{l\sum_i a_is_i} \right) z^{\sum_i |a_i|}.
\end{equation*}
By switching the summations, we obtain that
$$
\vartheta_{\mathcal L_{q,s}}(z)= \frac1q \sum_{l=0}^{q-1} \prod_{j=1}^n \left(1+\sum_{m\geq1}\xi_q^{mls_j}z^{ml}+ \sum_{m\geq1}\xi_q^{-mls_j}z^{ml}\right).
$$
Since the quantity inside the parenthesis is equal to $(1-z\xi_q^{ls_j})^{-1}+ (1-z\xi_q^{-ls_j})^{-1}-1= (1-z^2)(1-z\xi_q^{ls_j})^{-1}(1-z\xi_q^{-ls_j})^{-1}$, the assertion follows.
\end{proof}

The previous proof was communicated by Prof.\ Don Zagier to the author at Oberwolfach in July 2013.

\begin{remark}\label{rem:Ikeda-formula}
This proposition and Theorem~\ref{thm:spectradescription} imply that the generating function associated to the lens space $\Gamma_{q,s}\ba S^{2n-1}$ is
\begin{equation}\label{eq3:formulaF_GammaIkeda}
F_{\Gamma_{q,s}}(z) = \frac{1-z^2}{q}\sum_{l=0}^{q-1} \prod_{j=1}^n \frac{1}{ (z-\xi_q^{ls_j})(z-\xi_q^{-ls_j})}.
\end{equation}
This formula has been already proved in \cite[Thm.~3.2]{IkedaYamamoto79}, and was the main tool for Ikeda's construction of isospectral lens spaces (\cite{Ikeda80_isosp-lens}).
\end{remark}

\section{Isospectrality}\label{sec:isospectrality}
This section studies the case when two Laplace operators are \emph{isospectral}, that is, their spectra coincide.
Of course, the notion of isospectrality among orbifolds coincides with the equality of their corresponding spectral generating function.
Now, Theorem~\ref{thm:spectradescription} in our cases of interest, implies that this notion is equivalent to the coincidence of the corresponding generating theta functions.
This characterization is very useful to find examples of twisted and untwisted isospectral orbifolds.

Usually, one says that two orbifolds are isospectral if their corresponding untwisted Laplace operator acting on functions are isospectral.
However, in this paper we will always add the word untwisted to avoid confusion.
Similarly, we will say that two orbifolds are twisted isospectral if there exist nontrivial twists on each of them so that their corresponding twisted Laplace operators are isospectral.

\subsection{Isospectral characterization}\label{subsec:characterization}
Let $G$ be either $\SU(n+1)$, $\SO(2n+1)$, $\Sp(2)$ or $\SO(2n)$.
Recall that $P(G)$ is identified with $A_n$, $\Z^n$, $\Z^2$ and $\Z^n$ respectively (see Notation~\ref{not:L_q,s,u}), and the norm $\norma{\cdot}$ on $P(G)$ is $\norma{\cdot}_1$ excepting for $G=\Sp(2)$ where it is $\norma{\cdot}_\infty$.

\begin{definition}\label{def:norm-isospectral}
Let $\mathcal L$ and $\mathcal L'$ be two subsets of $P(G)$.
We say that they are \emph{$\norma{\cdot}$-isospectral} if $\vartheta_{\mathcal L}(z)=\vartheta_{\mathcal L'}(z)$, or equivalently, $N_{\mathcal L}(k)=N_{\mathcal L'}(k)$ for all $k\geq0$.
More precisely,
$$
\#\{\mu\in\mathcal L: \norma{\mu}=k\} = \#\{\mu\in\mathcal L': \norma{\mu}=k\}
\qquad \forall\, k\geq0.
$$
\end{definition}

The next isospectral characterization is a direct consequence of Theorem~\ref{thm:spectradescription}.

\begin{theorem}\label{thm:characterization}
Let $M=G/K$ be either $P^n(\C)$, $S^{2n}$, $P^1(\Ha)$ or $S^{2n-1}$ for $n\geq2$.
Let $\Gamma$ and $\Gamma'$ be finite subgroups of the maximal torus $T$ of $G$ and let $\chi$ and $\chi'$ be characters of $\Gamma$ and $\Gamma'$ respectively.
If $M\neq P^1(\Ha)$, $\Delta_{\Gamma,\chi}$ and $\Delta_{\Gamma',\chi'}$ are isospectral if and only if $\mathcal L_{\Gamma,\chi}$ and $\mathcal L_{\Gamma',\chi'}$ are $\norma{\cdot}_1$-isospectral.
If $M=P^1(\Ha)$, $\Delta_{\Gamma,\chi}$ and $\Delta_{\Gamma',\chi'}$ are isospectral if and only if $D_2\cap\mathcal L_{\Gamma,\chi}$ and $D_2\cap\mathcal L_{\Gamma',\chi'}$ are $\norma{\cdot}_\infty$-isospectral.
\end{theorem}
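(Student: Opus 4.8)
The plan is to derive this as an immediate consequence of Theorem~\ref{thm:spectradescription} by cancelling a common factor. As recalled at the opening of this section, isospectrality of $\Delta_{\Gamma,\chi}$ and $\Delta_{\Gamma',\chi'}$ means by definition that their spectral generating functions agree, i.e.\ $F_{\Gamma,\chi}(z)=F_{\Gamma',\chi'}(z)$ (this is legitimate because the eigenvalues are indexed by the fixed increasing sequence $\{\lambda_k\}$ of \eqref{eq3:lambda_k}, so equality of the power series is equivalent to equality of the multisets of eigenvalues with multiplicities). First I would substitute the explicit formula \eqref{eq3:Fformula} for each side.

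The crucial observation is that in each of the four cases the rational factor in \eqref{eq3:Fformula} multiplying the generating theta function depends only on $M$ (equivalently on $G$ and $n$), and not on the pair $(\Gamma,\chi)$. Denoting this factor by $c_M(z)$, for $M\neq P^1(\Ha)$ the identity $F_{\Gamma,\chi}(z)=F_{\Gamma',\chi'}(z)$ becomes
\begin{equation*}
c_M(z)\,\vartheta_{\mathcal L_{\Gamma,\chi}}(z) = c_M(z)\,\vartheta_{\mathcal L_{\Gamma',\chi'}}(z),
\end{equation*}
with $c_M(z)$ one of $\tfrac{1}{(1-z)^n}$, $\tfrac{1+z}{(1-z^2)^n}$, $\tfrac{1}{(1-z^2)^{n-1}}$; in the case $M=P^1(\Ha)$ the same holds with $c_M(z)=\tfrac{1+z}{(1-z^2)^2}$ and the lattices replaced by $D_2\cap\mathcal L_{\Gamma,\chi}$ and $D_2\cap\mathcal L_{\Gamma',\chi'}$.

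Next I would note that each $c_M(z)$ is a unit in $\Z[[z]]$: rewriting it (for instance $\tfrac{1+z}{(1-z^2)^n}=\tfrac{1}{(1-z)^n(1+z)^{n-1}}$) exhibits constant term $1$, hence an inverse power series exists. Cancelling $c_M(z)$ then shows that $F_{\Gamma,\chi}(z)=F_{\Gamma',\chi'}(z)$ holds if and only if $\vartheta_{\mathcal L_{\Gamma,\chi}}(z)=\vartheta_{\mathcal L_{\Gamma',\chi'}}(z)$ (respectively the $D_2$-version), which by Definition~\ref{def:norm-isospectral} is exactly the assertion that the two affine congruence lattices are $\norma{\cdot}_1$-isospectral --- or $\norma{\cdot}_\infty$-isospectral for $P^1(\Ha)$ --- proving both implications at once. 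Since everything reduces to this one cancellation, there is no genuine obstacle; the only points demanding care are verifying that the prefactor is truly independent of $(\Gamma,\chi)$ and invertible, and keeping track that in the quaternionic case the relevant object is $D_2\cap\mathcal L$ together with the norm $\norma{\cdot}_\infty$ rather than $\norma{\cdot}_1$.
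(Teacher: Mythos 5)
Your proposal is correct and follows essentially the same route as the paper: equate the spectral generating functions, invoke Theorem~\ref{thm:spectradescription}, and cancel the prefactor to reduce to equality of the generating theta functions, then apply Definition~\ref{def:norm-isospectral}. The only difference is that you explicitly verify that the prefactor $c_M(z)$ is a unit in $\Z[[z]]$ (a point the paper leaves implicit when asserting the equivalence), which is a worthwhile clarification but not a different argument.
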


\begin{proof}
We have that $\Delta_{\Gamma,\chi}$ and $\Delta_{\Gamma',\chi'}$ are isospectral if and only if $F_{\Gamma,\chi}(z) = F_{\Gamma',\chi'}(z)$, and by Theorem~\ref{thm:spectradescription} this is equivalent to $\vartheta_{\mathcal L_{\Gamma,\chi}}(z) = \vartheta_{\mathcal L_{\Gamma',\chi'}}(z)$ for $M\neq P^1(\Ha)$ or to $\vartheta_{D_2\cap\mathcal L_{\Gamma,\chi}}(z) = \vartheta_{D_2\cap\mathcal L_{\Gamma',\chi'}}(z)$ for $M=P^1(\Ha)$.
The proof is concluded in light of Definition~\ref{def:norm-isospectral}.
\end{proof}

\begin{remark}
An obvious consequence from the previous characterization is that a twisted Laplace operator cannot be isospectral to an untwisted one.
More precisely, $\Delta_{\Gamma,\chi}$ with $\chi$ non-trivial and $\Delta_{\Gamma',1_{\Gamma'}}$ cannot be isospectral since the weight $0$ is in $\mathcal L_{\Gamma',{1_{\Gamma'}}}$ but not in $\mathcal L_{\Gamma,\chi}$.
\end{remark}

The next goal is to show explicit examples of isospectral twisted and untwisted Laplace operators.
We will restrict our attention to the case when $\Gamma$ is a cyclic subgroup of $G$ as in \S\ref{subsec:cyclic}.
By Theorem~\ref{thm:characterization}, following the notation in \eqref{eq3:L_q,s}, it is sufficient, for fixed $q$ and $n$, to check whether $\mathcal L_{q,s,u}$ and $\mathcal L_{q,s',u'}$ (or $\mathcal L_{q,s,u}^*$ and $\mathcal L_{q,s',u'}^*$) are $\norma{\cdot}$-isospectral, that is, $N_{\mathcal L_{q,s,u}}(k) = N_{\mathcal L_{q,s',u'}}(k)$ for any $k\geq0$.

By using the computer program Sage~\cite{Sage}, for low values of $n$ and $q$, we found all pairs $(s,u)$, $(s',u')$ such that $\mathcal L_{q,s,u}$ and $\mathcal L_{q,s',u'}$ (and also $\mathcal L_{q,s,u}^*$ and $\mathcal L_{q,s',u'}^*$) are $\norma{\cdot}$-isospectral.
Clearly, $\mathcal L_{q,s,u}$ and $\mathcal L_{q,s,q-u}$ (and also $\mathcal L_{q,s,u}^*$ and $\mathcal L_{q,s,q-u}^*$) are trivially $\norma{\cdot}$-isospectral, thus we assume $0\leq u\leq q/2$.

\subsection{Isospectral examples covered by spheres}
Table~\ref{table:S^ntwisted} shows families of affine lattices of the form $\mathcal L_{q,s,u}$ that are mutually $\norma{\cdot}_1$-isospectral, for $2\leq n\leq 4$, $q\leq 7$ and $1\leq u\leq \lfloor\frac q2\rfloor$.
They induce families of twisted isospectral non-isometric orbifolds covered by $S^{2n}$ and $S^{2n-1}$.
Table~\ref{table:S^nuntwisted} shows all families of lattices of the form $\mathcal L_{q,s}$ that are mutually $\norma{\cdot}_1$-isospectral, for $2\leq n\leq 5$ and $q\leq 15$.
They induce families of untwisted isospectral non-isometric orbifolds covered by $S^{2n}$ and $S^{2n-1}$.
The cases (assuming $q>2$) that an orbifold covered by a sphere with cyclic fundamental group is a manifold is when $n$ is odd and $\gcd(s_j,q)=1$ for all $1\leq j\leq n$.

\begin{table}
\caption{Twisted $\norma{\cdot}_1$-isospectral families among $\mathcal L_{q,s,u}$ for $2\leq m\leq 4$ and $q\leq 7$}\label{table:S^ntwisted}
\hfill
\begin{minipage}{0.18\textwidth}
$
\begin{array}{ccc}
q&s&u \\ \hline
4 & [0, 1] & 1 \\
4 & [1, 2] & 1 \\ \hline
5 & [0, 1] & 1 \\
5 & [1, 2] & 1 \\
5 & [1, 2] & 2 \\ \hline
7 & [1, 2] & 1 \\
7 & [1, 2] & 2 \\ \hline \hline
4 & [0, 0, 1] & 1 \\
4 & [0, 1, 2] & 1 \\
4 & [1, 2, 2] & 1 \\ \hline
\end{array}
$
\end{minipage}
\hfill
\begin{minipage}[t]{0.18\textwidth}
$
\begin{array}{ccc}
q&s&u \\ \hline
4 & [0, 1, 1] & 1 \\
4 & [1, 1, 2] & 1 \\ \hline
5 & [0, 0, 1] & 1 \\
5 & [0, 1, 2] & 1 \\
5 & [0, 1, 2] & 2 \\ \hline
7 & [0, 1, 2] & 1 \\
7 & [0, 1, 2] & 2 \\
7 & [1, 2, 3] & 1 \\
7 & [1, 2, 3] & 2 \\
7 & [1, 2, 3] & 3 \\ \hline\hline
\end{array}
$
\end{minipage}
\hfill
\begin{minipage}[t]{0.20\textwidth}
$
\begin{array}{ccc}
q&s&u \\ \hline
4 & [0, 0, 0, 1] & 1 \\
4 & [0, 0, 1, 2] & 1 \\
4 & [0, 1, 2, 2] & 1 \\
4 & [1, 2, 2, 2] & 1 \\ \hline
4 & [0, 0, 1, 1] & 1 \\
4 & [0, 1, 1, 2] & 1 \\
4 & [1, 1, 2, 2] & 1 \\ \hline
4 & [0, 1, 1, 1] & 1 \\
4 & [1, 1, 1, 2] & 1 \\ \hline
\\
\end{array}
$
\end{minipage}
\hfill
\begin{minipage}[t]{0.20\textwidth}
$
\begin{array}{ccc}
q&s&u \\ \hline
5 & [0, 0, 0, 1] & 1 \\
5 & [0, 0, 1, 2] & 1 \\
5 & [0, 0, 1, 2] & 2 \\ \hline
5 & [0, 1, 1, 2] & 1 \\
5 & [1, 1, 2, 2] & 1 \\
5 & [1, 1, 2, 2] & 2 \\ \hline
7 & [1, 1, 2, 2] & 1 \\
7 & [1, 1, 2, 3] & 1 \\ \hline
\\ \\
\end{array}
$
\end{minipage}
\hfill
\begin{minipage}[t]{0.20\textwidth}
$
\begin{array}{ccc}
q&s&u \\ \hline
7 & [0, 0, 1, 2] & 1 \\
7 & [0, 0, 1, 2] & 2 \\
7 & [0, 1, 2, 3] & 1 \\
7 & [0, 1, 2, 3] & 2 \\
7 & [0, 1, 2, 3] & 3 \\ \hline \hline
\\ \\ \\ \\ \\
\end{array}
$
\end{minipage}
\hfill
\end{table}

\begin{table}
\caption{Untwisted $\norma{\cdot}_1$-isospectral families among $\mathcal L_{q,s}$ for $2\leq m\leq 5$ and $q\leq 15$}\label{table:S^nuntwisted}
\rule{0pt}{0pt}\hfill
\begin{minipage}[t]{0.18\textwidth}
$
\begin{array}{cc}
q&s \\ \hline
11 & [1, 2, 3] \\
11 & [1, 2, 4] \\ \hline
13 & [1, 2, 3] \\
13 & [1, 2, 4] \\ \hline
13 & [1, 2, 5] \\
13 & [1, 3, 4] \\ \hline
15 & [1, 2, 6] \\
15 & [1, 3, 4] \\ \hline \hline
\end{array}
$
\end{minipage}\hfill
\begin{minipage}[t]{0.18\textwidth}
$
\begin{array}{cc}
q&s \\ \hline
11 & [0, 1, 2, 3] \\
11 & [0, 1, 2, 4] \\ \hline
13 & [0, 1, 2, 3] \\
13 & [0, 1, 2, 4] \\ \hline
13 & [0, 1, 2, 5] \\
13 & [0, 1, 3, 4] \\ \hline
13 & [1, 2, 3, 4] \\
13 & [1, 2, 3, 5] \\ \hline
\end{array}
$
\end{minipage} \hfill
\begin{minipage}[t]{0.2\textwidth}
$
\begin{array}{cc}
q&s \\ \hline
15 & [0, 1, 2, 6] \\
15 & [0, 1, 3, 4] \\ \hline
15 & [1, 2, 5, 6] \\
15 & [1, 3, 4, 5] \\ \hline\hline
11 & [0, 0, 1, 2, 3] \\
11 & [0, 0, 1, 2, 4] \\ \hline
13 & [0, 0, 1, 2, 3] \\
13 & [0, 0, 1, 2, 4] \\ \hline
\end{array}
$
\end{minipage}\hfill
\begin{minipage}[t]{0.2\textwidth}
$
\begin{array}{cc}
q&s \\ \hline
13 & [0, 0, 1, 2, 5] \\
13 & [0, 0, 1, 3, 4] \\ \hline
13 & [0, 1, 2, 3, 4] \\
13 & [0, 1, 2, 3, 5] \\ \hline
15 & [0, 0, 1, 2, 6] \\
15 & [0, 0, 1, 3, 4] \\ \hline
\\ \\
\end{array}
$
\end{minipage}\hfill
\begin{minipage}[t]{0.20\textwidth}
$
\begin{array}{cc}
q&s \\ \hline
15 & [0, 1, 2, 5, 6] \\
15 & [0, 1, 3, 4, 5] \\ \hline
15 & [1, 2, 5, 5, 6] \\
15 & [1, 3, 4, 5, 5] \\ \hline
15 & [1, 2, 3, 6, 6] \\
15 & [1, 3, 3, 4, 6] \\ \hline\hline
\\ \\
\end{array}
$
\end{minipage}\hfill
\end{table}

Here are some remarks and particular examples.

\begin{remark}
Ikeda~\cite{Ikeda80_isosp-lens} showed a simple characterization and several examples of untwisted isospectral lens spaces by using \eqref{eq3:formulaF_GammaIkeda}.
His method works for $q$ prime, but it can be extended in general.
The article \cite{GornetMcGowan06} contains a numerical study of isospectral lens spaces based on Ikeda's method, though their results are correct for $q$ prime but not in general.
\end{remark}

\begin{remark}\label{rem:Shams}
Shams~\cite{Shams11} extended Ikeda's method to orbifold lens spaces, that is, spaces of the form $\Gamma_{q,s}\ba S^{2n-1}$ with $\gcd(q,s_j)\neq1$ for some $1\leq j\leq n$.
He obtained pairs of isospectral non-isometric orbifold lens spaces in any dimension greater than or equal to $9$.
The pair $\mathcal L_{15;(1,2,6)}$, $\mathcal L_{15;(1,3,4)}$ in Table~\ref{table:S^nuntwisted} gives an example in dimension $5$ and $6$, and the same table contains several examples in dimensions $7$ and $8$.
The author believes that such examples do not exist in any dimension smaller than $5$, even in the case of non-cyclic fundamental group.
\end{remark}

\begin{conjecture}\label{conj:dim3-4}
There are no pairs of untwisted isospectral non-isometric orbifolds covered by $S^n$ with $n\leq 4$.
\end{conjecture}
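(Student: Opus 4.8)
The plan is to collapse the statement, for all four values $n\le 4$, onto a single arithmetic rigidity question for rank-two congruence lattices. The spheres split naturally. For $n=1,2$ the relevant group is $\SO(2)$ or $\SO(3)$, whose maximal torus has rank one, so the parameter is $s=(s_1)\in\Z^1$; the standing hypothesis $\gcd(q,s)=1$ forces $s_1$ to be a unit modulo $q$, whence every cyclic subgroup of order $q$ is conjugate to $\Gamma_{q,(1)}$ and there is nothing to prove. The two genuine cases are $S^3$, with $G=\SO(4)$, and $S^4$, with $G=\SO(5)$, both of rank two. By Theorem~\ref{thm:characterization} the untwisted orbifolds $\Gamma_{q,s}\ba S^3$ and $\Gamma_{q,s'}\ba S^3$ are isospectral precisely when $\mathcal L_{q,s}$ and $\mathcal L_{q,s'}$ are $\norma{\cdot}_1$-isospectral, and the \emph{same} criterion governs $S^4$, since for both $\SO(4)$ and $\SO(5)$ one has $P(G)\simeq\Z^2$ and the lattice is cut out by the single congruence $a_1s_1+a_2s_2\equiv 0\pmod q$. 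Moreover Proposition~\ref{prop:non-isom} describes the isometry classes by the same equivalence (permutation of the two entries, sign changes, and scaling by a unit modulo $q$) in both dimensions. Hence the conjecture for $n\le 4$ is equivalent to the single implication
\begin{equation*}
\vartheta_{\mathcal L_{q,s}}(z)=\vartheta_{\mathcal L_{q,s'}}(z)\ \Longrightarrow\ s\sim s'.
\end{equation*}

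Next I would normalise using the elementary but decisive fact that scaling $s$ by a unit $\ell$ modulo $q$ leaves the set $\mathcal L_{q,s}$ unchanged, because $a_1(\ell s_1)+a_2(\ell s_2)\equiv 0$ is equivalent to $a_1 s_1+a_2 s_2\equiv 0$. Thus whenever one coordinate is coprime to $q$, say $\gcd(s_1,q)=1$, one may take $s=(1,t)$, and the residual symmetries (swapping coordinates, changing signs) collapse the free parameter to the orbit $\{\pm t,\pm t^{-1}\}\bmod q$. The degenerate configurations, in which neither coordinate is a unit, would be handled first by reading the divisors $d_j=\gcd(s_j,q)$ off the low-order data: for instance $N_{\mathcal L_{q,s}}(1)=2\,\#\{j:s_j\equiv 0\bmod q\}$, and more generally the first index $k$ at which lattice points appear on the $j$-th coordinate axis equals $q/d_j$. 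With $s=(1,t)$ in hand, the task reduces to proving that the one-norm theta function of $\{(a,b):a+bt\equiv 0\pmod q\}$ determines the class of $t$ modulo $t\mapsto -t$ and $t\mapsto t^{-1}$. Two routes present themselves: a direct combinatorial analysis of the first few coefficients $N(k)$, counting the solutions of $a\equiv -bt\pmod q$ with $|a|+|b|=k$; or the residue calculus of Ikeda and Yamamoto applied to the rational form~\eqref{eq3:formulaF_GammaIkeda} furnished by Proposition~\ref{prop:Zagier}, in which the poles at $\xi_q^{\pm l s_j}$ and their residues encode $s$.

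The hard part will be to make either route work \emph{uniformly in $q$}, and especially for composite $q$ and for the truly singular configurations with $\gcd(s_j,q)>1$. The manifold case of $S^3$ with $q$ prime is exactly the rigidity theorem of Ikeda and Yamamoto~\cite{IkedaYamamoto79,Yamamoto80}, and the computer search of Remark~\ref{rem:supportconjecture} confirms everything up to $q=200$; but the cyclotomic argument of \cite{IkedaYamamoto79} exploits both the primality of $q$ and the freeness of the action, neither of which survives in the general orbifold setting. The obstruction is moreover structural: for rank $\ge 3$ (dimension $\ge 5$) the analogous rigidity is simply false, as Table~\ref{table:S^nuntwisted} shows, so any successful argument must isolate the feature of rank-two one-norm counting that breaks down in higher rank. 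Concretely, one must exhibit a finite list of coefficients $N_{\mathcal L_{q,s}}(k)$ that pins down the orbit $\{\pm t^{\pm1}\}$ for \emph{every} modulus $q$, and control the error terms of the associated counting functions uniformly; securing this uniform control, rather than disposing of any single $q$, is where I expect the real difficulty to lie.
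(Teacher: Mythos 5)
The statement you are addressing is labelled a \emph{conjecture} in the paper, and the paper does not prove it: the only support offered is the Ikeda--Yamamoto rigidity theorem for $3$-dimensional lens spaces (the manifold case of $S^3$) together with a computer search finding no cyclic counterexamples with $q\leq 200$ (Remark~\ref{rem:supportconjecture}). Your proposal is likewise not a proof, and you effectively say so yourself: after reducing the cyclic case to the rank-two rigidity implication $\vartheta_{\mathcal L_{q,s}}(z)=\vartheta_{\mathcal L_{q,s'}}(z)\Rightarrow s\sim s'$, you sketch two possible routes and then state that making either work uniformly in $q$ is ``where I expect the real difficulty to lie.'' That implication \emph{is} the conjecture in the cyclic case; deferring it means nothing has been proved. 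The reduction itself is sound and consistent with the paper's machinery --- the collapse of the $S^3$ and $S^4$ cases onto one lattice problem is exactly the content of Corollary~\ref{cor:duality-spherical}, and the normalisation by units modulo $q$ and the low-order coefficient analysis (reading $\gcd(s_j,q)$ off the first nonzero coefficients) are reasonable first steps --- but beyond the reduction no new mathematical content is established.

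There is also a scope gap that would remain even if you proved the rank-two rigidity. The conjecture is not restricted to cyclic fundamental groups: the remark preceding it states the author's belief that such examples do not exist ``even in the case of non-cyclic fundamental group.'' Orbifolds $\Gamma\ba S^n$ with $n\leq 4$ include quotients by non-abelian groups (dihedral and polyhedral subgroups of $\SO(3)$ acting on $S^2$, binary-type and other non-abelian subgroups of $\SO(4)$ and $\SO(5)$), and by groups containing orientation-reversing isometries of $\Ot(n+1)$. For none of these does Theorem~\ref{thm:characterization} or Proposition~\ref{prop:non-isom} apply, since that machinery requires $\Gamma$ to lie in a maximal torus; so your claim that the conjecture for $n\leq4$ ``is equivalent to'' the single congruence-lattice implication is an overclaim. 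A complete proof must either treat these groups by a different method (for instance the representation-theoretic or Sunada-type analysis used for the non-cyclic example in Section~\ref{sec:isospectrality}), or show separately that non-cyclic and non-torus groups cannot produce untwisted isospectral non-isometric pairs in these low dimensions.
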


\begin{remark}\label{rem:supportconjecture}
In support to the previous conjecture, Ikeda and Yamamoto \cite{IkedaYamamoto79} \cite{Yamamoto80} have already proved that there are no untwisted isospectral non-isometric $3$-dimensional lens spaces.
In other words, they proved the conjecture for manifolds covered by $S^3$.
Our computations did not find any untwisted isospectral non-isometric $3$ or $4$-dimensional orbifold lens spaces with $q\leq 200$.
However, Table~\ref{table:S^ntwisted} shows twisted isospectral $3$ and $4$-dimensional orbifold lens spaces.
\end{remark}

\begin{example}\label{ex:L_4(0,1)1L_4(1,2)1}
Here we give a simple proof of $\norma{\cdot}_1$-isospectrality for the first example in Table~\ref{table:S^ntwisted}, namely $\mathcal L:=\mathcal L_{4,(0,1),1}$ and $\mathcal L':=\mathcal L_{4,(1,2),1}$.
We have that
$$
\mathcal L=\{(a,b)\in\Z^2: b\equiv 1\pmod 4\},
\qquad
\mathcal L'=\{(a,b)\in\Z^2: a+2b\equiv 1\pmod 4\}.
$$
Figure~\ref{fig:L_4(0,1)(1,2)} shows a picture of them.
Let $\varphi:\mathcal L'\to\Z^2$ given by
$$
\varphi(a,b)=
\begin{cases}
(a,b) &\quad\text{if }b\equiv 1\pmod4,\\
(-a,-b) &\quad\text{if }b\equiv 3\pmod4,\\
(b,a) &\quad\text{if }b\equiv 0\pmod2.
\end{cases}
$$
Clearly, $\varphi$ is $\norma{\cdot}_1$-preserving and one can easily check that it is injective and $\varphi(\mathcal L') = \mathcal L$.
It follows immediately that $\mathcal L$ and $\mathcal L'$ are $\norma{\cdot}_1$-isospectral.
\end{example}

\begin{remark}\label{rem:twistediso-mfd-orb}
Dryden, Gordon, Greenwald and Webb~\cite{DrydenGordonGreenwaldWebb08} proved that a manifold cannot be untwisted isospectral to an orbifold (that is not a manifold) when they have a common Riemannian cover.
The second family in Table~\ref{table:S^ntwisted} shows that this is not true for twisted isospectrality.
Indeed, the pair $\mathcal L_{5,(0,1),1},\,\mathcal L_{5,(1,2),1}$ induces the twisted isospectral pair $(\Gamma_{5,(0,1)}\ba S^{3},\chi_1)$, $(\Gamma_{5,(1,2)}\ba S^{3},\chi_1)$, where $\Gamma_{5,(1,2)}\ba S^{3}$ is a manifold and $\Gamma_{5,(0,1)}\ba S^{3}$ is an orbifold.
\end{remark}

\begin{example}
Here we give a proof of $\norma{\cdot}_1$-isospectrality of $\mathcal L_{q,(1,2),1}$ and $\mathcal L_{q,(1,2),2}$ for every $q$ odd.
Such as in Example~\ref{ex:L_4(0,1)1L_4(1,2)1}, we will give a $\norma{\cdot}_1$-preserving bijection between them.

Let $\mathcal L=\mathcal L_{q,(1,2)}=\{(a,b)\in\Z^2:a+2b\equiv0\pmod q\}$ (see Figure~\ref{fig:L_7(1,2)} for $q=7$), thus $\mathcal L_{q,(1,2),1}= \mathcal L + (1,0)$ and $\mathcal L_{q,(1,2),2}= \mathcal L + (0,1)$.
A nice way to see the points in these affine lattices is to see the points in $\mathcal L$ moving the origin to $(-1,0)$ and $(0,-1)$ respectively.
Hence, we are looking for a bijection $\varphi:\mathcal L\to\mathcal L$ such that $\norma{\varphi(\mu)-(0,-1)}_1=\norma{\mu-(-1,0)}_1$ for every $\mu\in\mathcal L$.
One can check that this condition is satisfied by the function
$$
\varphi(a,b)=
\begin{cases}
(a,b)   \quad&\text{if } a,b\geq1,\text{ or }a,b\leq -1,\\
(-a,-b) \quad&\text{if }-a,b\geq1,\text{ or }-a,b\leq -1,\\
(0,a)   \quad&\text{if } a\leq -1\text{ and }b=0.
\end{cases}
$$

One can easily check that $\mathcal L_{q,(0^m,1,2,),1}$ and $\mathcal L_{q,(0^m,1,2),2}$ are also $\norma{\cdot}_1$-isospectral for every $q$ odd.
Here $0^m$ in $s$ and $s'$ means a zero repeated $m$ times.
\end{example}

\begin{remark}
One can check that, for a fixed isospectral pair $(q,s,u)$ and $(q,s',u')$, if we add zero coordinates to $s$ and $s'$, then we again obtain isospectrality.
This fact was already known (see \cite[Cor.~3.2.4]{Shams11}).
\end{remark}

\setlength{\unitlength}{5mm}

\begin{figure}
\hfill
\begin{minipage}{0.40\textwidth}
\begin{picture}(13,13)
\multiput(0,0)(0,1){13}{\multiput(0,0)(1,0){13}{\circle*{0.1}}}
\put(6,0){\line(0,1){12}}
\put(0,6){\line(1,0){12}}
\multiput(0,0)(0,2){7}{\multiput(3,0)(4,0){3}{\circle{0.2}}}
\multiput(0,0)(0,2){6}{\multiput(1,1)(4,0){3}{\circle{0.2}}}
\multiput(0,0)(0,4){3}{\multiput(-0.2,3)(1,0){13}{\line(1,0){0.4}}}
\multiput(0,0)(0,4){3}{\multiput(0,2.8)(1,0){13}{\line(0,1){0.4}}}
\end{picture}
\caption{$\mathcal L_{4,(0,1),1},\,\mathcal L_{4,(1,2),1}$  }
\label{fig:L_4(0,1)(1,2)}
\end{minipage}\hfill
\begin{minipage}{0.40\textwidth}
\begin{picture}(13,13)
\multiput(0,0)(0,1){13}{\multiput(0,0)(1,0){13}{\circle*{0.1}}}
\put(6,0){\line(0,1){12}}
\put(0,6){\line(1,0){12}}
\multiput(0,0)(0,7){2}{\multiput(4,0)(7,0){2}{\circle*{0.3}}}
\multiput(0,0)(0,7){2}{\multiput(2,1)(7,0){2}{\circle*{0.3}}}
\multiput(0,0)(0,7){2}{\multiput(0,2)(7,0){2}{\circle*{0.3}}}
\multiput(0,0)(0,7){2}{\multiput(5,3)(7,0){2}{\circle*{0.3}}}
\multiput(0,0)(0,7){2}{\multiput(3,4)(7,0){2}{\circle*{0.3}}}
\multiput(0,0)(0,7){2}{\multiput(1,5)(7,0){2}{\circle*{0.3}}}
\multiput(6,6)(7,0){1}{\circle*{0.3}}
\put(5,6){\circle{0.4}}
\put(6,5){\circle{0.4}}
\end{picture}
\caption{$\mathcal L_{7,(1,2)}$}\label{fig:L_7(1,2)}
\end{minipage}
\hfill
\end{figure}

\subsection{Isospectral examples covered by complex projective spaces}
Table~\ref{table:P^n(C)twisted} shows all families of affine lattices of the form $\mathcal L_{q,s,u}^*$ that are mutually $\norma{\cdot}_1$-isospectral, for $2\leq n\leq 3$, $q\leq 6$ and $1\leq u\leq \lfloor\frac q2\rfloor$.
They induce families of $2n$-dimensional twisted isospectral non-isometric orbifolds covered by $P^n(\C)$.
Table~\ref{table:P^n(C)untwisted} shows all families of lattices of the form $\mathcal L_{q,s}^*$ that are mutually $\norma{\cdot}_1$-isospectral, for $2\leq n\leq 3$ and $q\leq 10$.
They induce families of $2n$-dimensional untwisted isospectral non-isometric orbifolds covered by $P^n(\C)$.
Every orbifold in this case is not a manifold.

\begin{table}
\caption{Twisted $\norma{\cdot}_1$-isospectral families among $\mathcal L_{q,s,u}^*$ for $2\leq n\leq 3$ and $q\leq 6$}\label{table:P^n(C)twisted}
\hfill
\begin{minipage}{0.18\textwidth}
$
\begin{array}{ccc}
q&s&u \\ \hline
4 & [0, 1, 3] & 1 \\
4 & [1, 1, 2] & 1 \\ \hline
6 & [0, 1, 5] & 1 \\
6 & [1, 2, 3] & 1 \\ \hline
6 & [0, 1, 5] & 2 \\
6 & [1, 2, 3] & 2 \\ \hline
6 & [0, 1, 5] & 3 \\
6 & [1, 2, 3] & 3 \\ \hline
\\
\end{array}
$
\end{minipage}
\hfill
\begin{minipage}[t]{0.20\textwidth}
$
\begin{array}{ccc}
q&s&u \\ \hline
6 & [1, 1, 4] & 1 \\
6 & [1, 1, 4] & 2 \\ \hline \hline
4 & [0, 0, 1, 3] & 1 \\
4 & [0, 1, 1, 2] & 1 \\
4 & [1, 2, 2, 3] & 1 \\ \hline
4 & [0, 0, 1, 3] & 2 \\
4 & [0, 1, 1, 2] & 2 \\
4 & [1, 2, 2, 3] & 2 \\ \hline
\\
\end{array}
$
\end{minipage}
\hfill
\begin{minipage}[t]{0.20\textwidth}
$
\begin{array}{ccc}
q&s&u \\ \hline
4 & [1, 1, 1, 1] & 1 \\
4 & [1, 1, 1, 1] & 2 \\
4 & [1, 1, 3, 3] & 1 \\ \hline
5 & [1, 2, 3, 4] & 1 \\
5 & [1, 2, 3, 4] & 2 \\ \hline
6 & [0, 0, 1, 5] & 1 \\
6 & [2, 3, 3, 4] & 1 \\ \hline
6 & [0, 0, 1, 5] & 2 \\
6 & [2, 3, 3, 4] & 2 \\ \hline
\end{array}
$
\end{minipage}
\hfill
\begin{minipage}[t]{0.20\textwidth}
$
\begin{array}{ccc}
q&s&u \\ \hline
6 & [0, 0, 1, 5] & 3 \\
6 & [2, 3, 3, 4] & 3 \\ \hline
6 & [1, 1, 1, 3] & 1 \\
6 & [1, 1, 1, 3] & 3 \\
6 & [1, 1, 5, 5] & 1 \\
6 & [1, 1, 5, 5] & 3 \\
6 & [1, 3, 3, 5] & 1 \\
6 & [1, 3, 3, 5] & 3 \\ \hline
\\
\end{array}
$
\end{minipage}
\hfill
\end{table}

\begin{table}
\caption{Untwisted $\norma{\cdot}_1$-isospectral families among $\mathcal L_{q,s}^*$ for $2\leq n\leq 3$ and $q\leq 10$}\label{table:P^n(C)untwisted}
\rule{0pt}{0pt}\hfill
\begin{minipage}[t]{0.18\textwidth}
$
\begin{array}{cc}
q&s \\ \hline
6 & [0, 1, 5] \\
6 & [1, 2, 3] \\ \hline
9 & [0, 1, 8] \\
9 & [1, 2, 6] \\ \hline\hline
\\
\end{array}
$
\end{minipage}\hfill
\begin{minipage}[t]{0.18\textwidth}
$
\begin{array}{cc}
q&s \\ \hline
4 & [0, 0, 1, 3] \\
4 & [0, 1, 1, 2] \\
4 & [1, 2, 2, 3] \\ \hline
6 & [0, 0, 1, 5] \\
6 & [2, 3, 3, 4] \\ \hline
\end{array}
$
\end{minipage} \hfill
\begin{minipage}[t]{0.2\textwidth}
$
\begin{array}{cc}
q&s \\ \hline
7 & [0, 1, 2, 4] \\
7 & [1, 2, 5, 6] \\ \hline
8 & [0, 0, 1, 7] \\
8 & [1, 2, 2, 3] \\
8 & [1, 4, 4, 7] \\ \hline
\end{array}
$
\end{minipage}\hfill
\begin{minipage}[t]{0.2\textwidth}
$
\begin{array}{cc}
q&s \\ \hline
8 & [0, 1, 1, 6] \\
8 & [1, 1, 2, 4] \\ \hline
8 & [1, 1, 3, 3] \\
8 & [1, 1, 7, 7] \\ \hline
\\
\end{array}
$
\end{minipage}\hfill
\begin{minipage}[t]{0.20\textwidth}
$
\begin{array}{cc}
q&s \\ \hline
10 & [0, 0, 1, 9] \\
10 & [2, 5, 5, 8] \\ \hline
10 & [0, 1, 1, 8] \\
10 & [1, 2, 2, 5] \\ \hline
\\
\end{array}
$
\end{minipage}\hfill
\end{table}

\begin{remark}
We explain here the first example in Table~\ref{table:P^n(C)untwisted} and the second, third and fourth example in Table~\ref{table:P^n(C)twisted}.
Let $q=6$, $s=(0,1,5)$ and $s'=(1,2,3)$.
One can check that
\begin{align*}
\mathcal L &:=\mathcal L_{q,s}^* \,= \Z(6,0,-6)\oplus\Z(-2,1,1)=\{(a,b,c)\in A_2: b\equiv c\pmod 6\}  ,\\
\mathcal L'&:=\mathcal L_{q,s'}^* =\Z(0,6,-6)\oplus\Z(1,-2,1)=\{(a,b,c)\in A_2: -b\equiv 2c\pmod 6\}.
\end{align*}
It follows immediately that $\mathcal L_{q,s,u}^*$ and $\mathcal L_{q,s',u}^*$ are $\norma{\cdot}_1$-isospectral for any $u$ since $\mathcal L$ and $\mathcal L'$ are related by the permutation $(a,b,c)\mapsto (b,a,c)$ which clearly preserves $\norma{\cdot}_1$.
On the other hand, the orbifolds $\Gamma_{q,s}\ba P^2(\C)$ and $\Gamma_{q,s'}\ba P^2(\C)$ have dimension $4$ and are non-isometric by Proposition~\ref{prop:non-isomP^n(C)} since every coefficient in $s'$ is not divisible by $q$.
\end{remark}

\begin{remark}
The minimum dimension of twisted or untwisted orbifold covered by complex projective spaces is $4$, thus $n=2$.
Indeed, if $n=1$ then $\mathcal L_{q,s}=\Z(q,-q)$ for any $s\in\Z$ coprime to $q$, thus there is only one such orbifold whose fundamental group order is $q$, and the assertion follows since $q$ is a spectral invariant (see Remark~\ref{rem:q_spectrainvariant}).
\end{remark}

\subsection{Isospectral examples covered by the quaternion projective line}
Table~\ref{table:P^1(H)twisted} shows all families of affine lattices of the form $\mathcal L_{q,s,u}$ that are mutually $\norma{\cdot}_\infty$-isospectral, for $n=2$, $q\leq 9$ and $1\leq u\leq \lfloor\frac q2\rfloor$.
They induce families of $4$-dimensional twisted isospectral orbifolds covered by $P^1(\Ha)$.
Table~\ref{table:P^1(H)untwisted} shows all families of lattices of the form $\mathcal L_{q,s}$ that are mutually $\norma{\cdot}_\infty$-isospectral, for $n=2$ and $q\leq 10$.
They induce families of $4$-dimensional untwisted isospectral non-isometric orbifolds covered by $P^1(\Ha)$.
Similarly as in the case of $P^n(\C)$, every orbifold in this subsection is not a manifold.

\begin{table}
\caption{Twisted $\norma{\cdot}_\infty$-isospectral families among $\mathcal L_{q,s,u}$ for $n=2$ and $q\leq 9$}\label{table:P^1(H)twisted}
\hfill
\begin{minipage}{0.18\textwidth}
$
\begin{array}{ccc}
q&s&u \\ \hline
4 & [0, 1] & 1 \\
4 & [1, 2] & 1 \\ \hline
4 & [0, 1] & 2 \\
4 & [1, 2] & 2 \\ \hline
5 & [1, 1] & 2 \\
5 & [1, 2] & 1 \\
5 & [1, 2] & 2 \\ \hline
6 & [0, 1] & 1 \\
6 & [2, 3] & 1 \\ \hline
\end{array}
$
\end{minipage}
\hfill
\begin{minipage}[t]{0.18\textwidth}
$
\begin{array}{ccc}
q&s&u \\ \hline
6 & [0, 1] & 2 \\
6 & [2, 3] & 2 \\ \hline
6 & [0, 1] & 3 \\
6 & [2, 3] & 3 \\ \hline
6 & [1, 1] & 1 \\
6 & [1, 1] & 3 \\
6 & [1, 3] & 1 \\
6 & [1, 3] & 3 \\ \hline
\\
\end{array}
$
\end{minipage}
\hfill
\begin{minipage}[t]{0.18\textwidth}
$
\begin{array}{ccc}
q&s&u \\ \hline
7 & [1, 2] & 1 \\
7 & [1, 2] & 3 \\ \hline
8 & [0, 1] & 1 \\
8 & [1, 4] & 3 \\ \hline
8 & [0, 1] & 2 \\
8 & [1, 2] & 2 \\
8 & [1, 4] & 2 \\ \hline
8 & [0, 1] & 3 \\
8 & [1, 4] & 1 \\ \hline
\end{array}
$
\end{minipage}
\hfill
\begin{minipage}[t]{0.18\textwidth}
$
\begin{array}{ccc}
q&s&u \\ \hline
8 & [0, 1] & 4 \\
8 & [1, 4] & 4 \\ \hline
8 & [1, 1] & 1 \\
8 & [1, 1] & 3 \\
8 & [1, 3] & 1 \\
8 & [1, 3] & 3 \\ \hline
8 & [1, 1] & 2 \\
8 & [1, 3] & 2 \\ \hline
\\
\end{array}
$
\end{minipage}
\hfill
\begin{minipage}[t]{0.18\textwidth}
$
\begin{array}{ccc}
q&s&u \\ \hline
8 & [1, 2] & 1 \\
8 & [1, 2] & 3 \\ \hline
9 & [1, 1] & 4 \\
9 & [1, 2] & 4 \\ \hline
9 & [1, 3] & 2 \\
9 & [1, 3] & 4 \\ \hline
\\ \\ \\
\end{array}
$
\end{minipage}
\hfill
\end{table}

\begin{table}
\caption{Untwisted $\norma{\cdot}_\infty$-isospectral families among $\mathcal L_{q,s}$ for $n=2$ and $q\leq 20$}\label{table:P^1(H)untwisted}
\rule{0pt}{0pt}\hfill
\begin{minipage}[t]{0.125\textwidth}
$
\begin{array}{cc}
q&s \\ \hline
4 & [0, 1] \\
4 & [1, 2] \\ \hline
6 & [0, 1] \\
6 & [2, 3] \\ \hline
\end{array}
$
\end{minipage}\hfill
\begin{minipage}[t]{0.125\textwidth}
$
\begin{array}{cc}
q&s \\ \hline
8 & [0, 1] \\
8 & [1, 4] \\ \hline
10 & [0, 1] \\
10 & [2, 5] \\ \hline
\end{array}
$
\end{minipage} \hfill
\begin{minipage}[t]{0.125\textwidth}
$
\begin{array}{cc}
q&s \\ \hline
12 & [0, 1] \\
12 & [1, 6] \\ \hline
12 & [1, 2] \\
12 & [1, 4] \\ \hline
\end{array}
$
\end{minipage} \hfill
\begin{minipage}[t]{0.125\textwidth}
$
\begin{array}{cc}
q&s \\ \hline
12 & [2, 3] \\
12 & [3, 4] \\ \hline
14 & [0, 1] \\
14 & [2, 7] \\ \hline
\end{array}
$
\end{minipage}\hfill
\begin{minipage}[t]{0.125\textwidth}
$
\begin{array}{cc}
q&s \\ \hline
14 & [1, 2] \\
14 & [1, 4] \\ \hline
16 & [0, 1] \\
16 & [1, 8] \\ \hline
\end{array}
$
\end{minipage}\hfill
\begin{minipage}[t]{0.125\textwidth}
$
\begin{array}{cc}
q&s \\ \hline
16 & [1, 2] \\
16 & [1, 6] \\ \hline
18 & [0, 1] \\
18 & [2, 9] \\ \hline
\end{array}
$
\end{minipage}\hfill
\begin{minipage}[t]{0.125\textwidth}
$
\begin{array}{cc}
q&s \\ \hline
18 & [1, 2] \\
18 & [1, 4] \\ \hline
18 & [1, 6] \\
18 & [2, 3] \\ \hline
\end{array}
$
\end{minipage}\hfill
\end{table}

\begin{example}\label{ex:P^1(H)q=4}
We study here the first pair in Table~\ref{table:P^1(H)untwisted} and the two first pairs in Table~\ref{table:P^1(H)twisted}.
Let $q=4$, $s=(0,1)$, $s'=(1,2)$.
The corresponding congruence lattices $\mathcal L:=\mathcal L_{q,s}$ and $\mathcal L'=\mathcal L_{q,s'}$ are the same as in Example~\ref{ex:L_4(0,1)1L_4(1,2)1}.
One can easily check that
$$
D_2\cap\mathcal L =\Z(0,4)\oplus\Z(2,0)
\qquad\text{and}\qquad
D_2\cap\mathcal L'=\Z(4,0)\oplus\Z(0,2).
$$
It follows immediately that $D_2\cap\mathcal L$ and $D_2\cap\mathcal L'$ are $\norma{\cdot}_\infty$-isospectral since they differ only by a permutation of coordinates, which preserves $\norma{\cdot}_\infty$.
However, the orbifolds $\Gamma_{q,s}\ba P^1(\Ha)$ and $\Gamma_{q,s'}\ba P^1(\Ha)$ are not isometric by Proposition~\ref{prop:non-isom}.
\end{example}

\begin{remark}\label{rem:P^1(H)situation}
Every example in Table~\ref{table:P^1(H)untwisted} is explained as in the previous example, that is, $D_2\cap \mathcal L_{q,s}$ and $D_2\cap \mathcal L_{q,s'}$ are $\norma{\cdot}_\infty$-isometric.
\end{remark}

\subsection{More remarks}
Here we include more remarks.
For a recent account on spectral theory of orbifolds we refer the reader to \cite{Gordon12-orbifold}.

\begin{remark}
Miatello and Rossetti~\cite{MR02-comparison} studied twisted isospectrality on compact flat manifolds.
It is not difficult to construct a twisted isospectral pair between a compact flat manifold and a compact flat orbifold with singularities like in Remark~\ref{rem:twistediso-mfd-orb}.
Gordon and Rossetti~\cite{GordonRossetti03} gave similar examples replacing twisted spectrum by the middle spectrum.
\end{remark}

\begin{remark}
Rossetti, Schueth and Weilandt~\cite{RossettiSchuethWeilandt08} showed examples of untwisted isospectral orbifolds such that the maximal isotropy order of singular points are different (see also \cite{ShamsStanhopeWebb06}).
Such examples do not appear in this article.
Indeed, for $M=G/K=P^n(\C),P^1(\Ha)$, it is clear that the maximal isotropy order of $\Gamma_{q,s}\ba M$ is $q$ since $\Gamma_{q,s}\subset gKg^{-1}$ for some $g\in G$ (see \cite[Thm.~2.5~(ii)]{RossettiSchuethWeilandt08}).
When $M=S^{2n},S^{2n-1}$, the maximal isotropy order $\Gamma_{q,s}\ba M$ is determined by the set $\{\gcd(s_i,q): 1\leq i\leq n\}$.
One can check that these sets coincide for each untwisted isospectral pair in Table~\ref{table:S^nuntwisted} (cf.\ \cite[\S3.4]{FarsiProctorSeaton14}).
\end{remark}

\subsection{A non-cyclic example}

We set
$\Gamma=\langle\gamma_1,\gamma_2,\gamma_3\rangle$ and $\Gamma'=\langle\gamma_1',\gamma_2',\gamma_3'\rangle$,
where
\begin{equation}
\begin{array}{r@{\;\,=\,\diag(}r@{}r@{}r@{}r@{}r@{}r@{\qquad}r@{\;\,=\,\diag(}r@{}r@{}r@{}r@{}r@{}r}
\gamma_1 & -I_2, & -I_2, &  I_2, &  I_2, &  I_2, &  I_2 ),&    \gamma_1'& -I_2, & -I_2, &  I_2, &  I_2, &  I_2, &  I_2 ),\\
\gamma_2 &  I_2, &  I_2, & -I_2, & -I_2, &  I_2, &  I_2 ),&    \gamma_2'& -I_2, &  I_2, & -I_2, &  I_2, &  I_2, &  I_2 ),\\
\gamma_3 &  I_2, &  I_2, &  I_2, &  I_2, & -I_2, & -I_2 ),&    \gamma_3'& -I_2, & -I_2, & -I_2, & -I_2, & -I_2, & -I_2 ).\\[2mm]
\end{array}
\end{equation}
Here $I_2$ denotes the $2\times2$ identity matrix.
Then $\Gamma$ and $\Gamma'$ are subgroups of the maximal torus in $\SO(12)$ and the orbifolds $\Gamma\ba S^{11}$ and $\Gamma'\ba S^{11}$ are not manifolds.
Indeed, each element in $\Gamma$ and in $\Gamma'$ does not act without fixed points.
One can check that
\begin{align*}
  \mathcal L_\Gamma &=
  \left\{\mu= (a_1,\dots,a_6)\in\Z^6:
  \begin{array}{r}
  a_1+a_2\equiv 0\pmod 2\\
  a_3+a_4\equiv 0\pmod 2\\
  a_5+a_6\equiv 0\pmod 2
  \end{array}
  \right\},\\
  \mathcal L_\Gamma' &=
  \left\{\mu= (a_1,\dots,a_6)\in\Z^6:
  \begin{array}{r}
  a_1+a_2\equiv 0\pmod 2\\
  a_3+a_4\equiv 0\pmod 2\\
  a_1+\dots+a_6\equiv 0\pmod 2
  \end{array}
  \right\}.
\end{align*}

These lattices were already used by Conway and Sloane in \cite{ConwaySloane92} to give lattices in dimension $6$ with the same theta series.
In our words, these lattices are \emph{two-norm isospectral}, it means that for each $t\geq0$ we have that
\begin{equation}\label{eq4:two-norm_isospectral}
\#\{\mu\in\mathcal L_{\Gamma}: \|\mu\|_2=t\}
= \#\{\mu\in\mathcal L_{\Gamma'}: \|\mu\|_2=t\}.
\end{equation}
Here $\|\mu\|_2=\sqrt{a_1^2+\dots+a_n^2}$.
By \cite{Milnor64}, this pair induces a pair of $6$-dimensional flat tori isospectral with respect to the Laplace operator.
Also in \cite{ConwaySloane92}, the authors construct $4$-dimensional two-norm isospectral lattices.

Write $\Psi:\Z^6\to (\Z/2\Z)^6$ given by the class modulo $2$ in each coordinate.
We have that $\mathcal L_{\Gamma}= \Psi^{-1}(C)$ and $\mathcal L_{\Gamma'}= \Psi^{-1}(C')$, where $C$ and $C'$ are the codes in $(\Z/2\Z)^6$ given by
\begin{align}
C:\quad&
\begin{array}{cccccc}
0&0&0&0&0&0\\
1&1&0&0&0&0\\
0&0&1&1&0&0\\
0&0&0&0&1&1\\
1&1&1&1&1&1\\
1&1&1&1&0&0\\
1&1&0&0&1&1\\
0&0&1&1&1&1
\end{array},
\qquad\qquad
C':\quad
\begin{array}{cccccc}
0&0&0&0&0&0\\
1&1&0&0&0&0\\
1&0&1&0&0&0\\
0&1&1&0&0&0\\
1&1&1&1&1&1\\
0&1&0&1&1&1\\
1&0&0&1&1&1\\
0&0&1&1&1&1
\end{array}.
\end{align}
By this clever way to see these lattices, it follows that the lattices are two-norm isospectral since the preimages of each row clearly satisfy \eqref{eq4:two-norm_isospectral}.

It is also clear that the preimages of each row are also one-norm isospectral, then so are $\mathcal L_{\Gamma}$ and $\mathcal L_{\Gamma'}$.
By Theorem~\ref{thm:characterization} we obtain that the orbifolds $\Gamma\ba S^{11}$ and $\Gamma'\ba S^{11}$ are isospectral with respect to the untwisted Laplace operator on functions.
Moreover, similarly as in Corollary~\ref{cor:duality-spherical}, this pair induces a pair of isospectral orbifolds covered by $S^{12}$.

Additionally, one can check also immediately that $\mathcal L_{\Gamma}$ and $\mathcal L_{\Gamma'}$ are $\norma{\cdot}_1^*$-isospectral in the sense of \cite{LMR15-onenorm}.
This implies that the orbifolds $\Gamma\ba S^{11}$ and $\Gamma'\ba S^{11}$ are $p$-isospectral for all $p$ by \cite[Thm.~3.9]{LMR15-onenorm}.
However, contrary to all examples in \cite{LMR15-onenorm}, $\Gamma$ and $\Gamma'$ are representation equivalent in $\SO(12)$, thus the orbifolds are already strongly equivalent.
The fact that $\Gamma$ and $\Gamma'$ are almost conjugate (which implies representation equivalent) follows immediately from
\begin{align*}
\gamma_1&=\gamma_1', &
\gamma_3&\sim \gamma_1'\gamma_2', &
\gamma_1\gamma_3 & \sim \gamma_1'\gamma_2'\gamma_3', &
\gamma_1\gamma_2\gamma_3&=\gamma_3',\\
\gamma_2&\sim\gamma_2', &
\gamma_1\gamma_2 &\sim \gamma_2'\gamma_3', &
\gamma_2\gamma_3 & =\gamma_1'\gamma_3'.
\end{align*}
Here, the symbol $\sim$ means that they are conjugate in $\SO(12)$, or equivalently, they have the same eigenvalues.

Any pair of almost conjugate groups given by $n\times n$ diagonal matrices with entries $\pm1$, induces a pair of strongly isospectral orbifolds covered by $S^{2n-1}$ (and also by $S^{2n}$).
Of course, the orbifolds are isometric if and only if the subgroups are conjugate.
In \cite[\S2]{LMR13-hearcoho}, there are several such examples.
For instance, \cite[Example~4.3]{LMR13-hearcoho} shows a family of eight subgroups of $\SO(24)$ that are almost conjugate (and not conjugate) pairwise.

In conclusion, these kind of examples come from Sunada's method (or representation equivalent method) and not exclusively from the one-norm method studied in \cite{LMR15-onenorm} and in this paper.

\subsection{A more general characterization}
It is clear that the orbifold covered by $S^{2n}$ with cyclic fundamental group of order $q$ are in correspondences with the ones covered by $S^{2n-1}$.
This holds because the maximal torus on $\SO(2n+1)$ and $\SO(2n)$ coincide.
Moreover, we have seen that the characterization of twisted and untwisted isospectral orbifolds with cyclic fundamental group is the same in both cases since the associated congruence lattices coincide.
We conclude this article by giving a more general relation between isospectral orbifolds covered by $S^{2n}$ and $S^{2n-1}$.

\begin{corollary}\label{cor:duality-spherical}
Let $G=\SO(2n+1)$, $H=\{g\in G: ge_{2n+1}=e_{2n+1}\}\simeq\SO(2n)$ and $K=\{g\in G: ge_{2n+1}=e_{2n+1},\;ge_{2n}=e_{2n}\}\simeq\SO(2n-1)$.
Let $\Gamma$ and $\Gamma'$ be finite subgroups of $H$ and let $\chi$ and $\chi'$ be representations of $\Gamma$ and $\Gamma'$ respectively.
Then, $\Delta_{S^{2n},\Gamma,\chi}$ and $\Delta_{S^{2n},\Gamma',\chi'}$ are isospectral if and only if $\Delta_{S^{2n-1},\Gamma,\chi}$ and $\Delta_{S^{2n-1},\Gamma',\chi'}$ are isospectral
\end{corollary}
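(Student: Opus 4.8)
The plan is to bypass the hypothesis that $\Gamma$ and $\Gamma'$ lie in a maximal torus (the corollary allows arbitrary finite subgroups of $H$) and work directly with the branching multiplicities furnished by Theorem~\ref{thm:spec_chi_rankone}. Since $S^{2n}=\SO(2n+1)/\SO(2n)$ and $S^{2n-1}=\SO(2n)/\SO(2n-1)$ are both rank-one symmetric spaces with $[1_K:\pi|_K]=1$ for every spherical $\pi$, that theorem gives
\begin{equation*}
\mult_{\Delta_{S^{2n},\Gamma,\chi}}(\lambda_k)=[\chi:\pi_k^{B}|_\Gamma],
\qquad
\mult_{\Delta_{S^{2n-1},\Gamma,\chi}}(\lambda_k)=[\chi:\pi_k^{D}|_\Gamma],
\end{equation*}
where $\pi_k^{B}$ and $\pi_k^{D}$ denote the spherical representations of highest weight $k\varepsilon_1$ of $\SO(2n+1)$ and $\SO(2n)$ respectively (the symbol $\lambda_k$ has a different numerical value in each case, but only its index $k$ matters below). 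Because $\Gamma\subset H\simeq\SO(2n)$, the restriction of $\pi_k^{B}$ to $\Gamma$ factors through $H$, which is exactly the feature I will exploit.

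The key step is the classical branching rule for spherical harmonics under the restriction $\SO(2n+1)\downarrow\SO(2n)$, namely
\begin{equation*}
\pi_k^{B}\big|_{H}\;\cong\;\bigoplus_{j=0}^{k}\pi_j^{D}.
\end{equation*}
Restricting both sides further to $\Gamma$ and extracting the $\chi$-isotypic component yields the partial-sum identity $[\chi:\pi_k^{B}|_\Gamma]=\sum_{j=0}^{k}[\chi:\pi_j^{D}|_\Gamma]$. I expect this to be the only substantive obstacle: one must match the labelling of the $B_n$ and $D_n$ spherical series by $k\varepsilon_1$ correctly across the two root systems and invoke the branching rule in its multiplicity-free form for the harmonic representations.

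Summing against $z^k$, the partial-sum identity translates into the clean relation
\begin{equation*}
F_{\Delta_{S^{2n},\Gamma,\chi}}(z)=\frac{1}{1-z}\,F_{\Delta_{S^{2n-1},\Gamma,\chi}}(z),
\end{equation*}
and the same holds for $\Gamma',\chi'$. Since $(1-z)^{-1}$ is a unit in the ring of formal power series, $F_{\Delta_{S^{2n},\Gamma,\chi}}=F_{\Delta_{S^{2n},\Gamma',\chi'}}$ holds if and only if $F_{\Delta_{S^{2n-1},\Gamma,\chi}}=F_{\Delta_{S^{2n-1},\Gamma',\chi'}}$; as isospectrality of orbifolds over a fixed $M$ is precisely equality of their spectral generating functions, the corollary follows. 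As a sanity check in the torus case, the explicit formulas of Theorem~\ref{thm:spectradescription} give the ratio $\frac{1+z}{(1-z^2)^{n}}\big/\frac{1}{(1-z^2)^{n-1}}=\frac{1+z}{1-z^2}=\frac{1}{1-z}$, using that the congruence lattice $\mathcal L_{\Gamma,\chi}$ is literally the same subset of $P(G)\simeq\Z^n$ for $G=\SO(2n+1)$ and $G=\SO(2n)$, since these two weight lattices coincide; this confirms the branching-based relation.
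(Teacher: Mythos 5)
Your proposal is correct and follows essentially the same route as the paper's own proof: both rest on Theorem~\ref{thm:spec_chi_rankone} together with the classical branching law $\pi_k^{B}|_{\SO(2n)}\simeq\bigoplus_{j=0}^{k}\pi_j^{D}$, yielding the partial-sum identity for the $\chi$-multiplicities. Your repackaging of the final step as multiplication of generating functions by the unit $(1-z)^{-1}$ is just a cosmetic variant of the paper's direct observation that the partial sums agree for all $k$ if and only if the individual terms do.
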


\begin{proof}
Throughout the proof, $\pi_k$ and $\sigma_k$ denote the elements in $\widehat G_H$ and $\widehat H_K$ respectively with highest weight $k\varepsilon_1$.
By the classical branching law from $G$ to $H$ (see for instance \cite[Thm.~9.16]{Knapp-book-beyond}), we have that $\pi_k|_H\simeq\oplus_{l=0}^k\, \sigma_l$.
Consequently,
\begin{align*}
[\chi:\pi_k|_\Gamma] &= \sum_{l=0}^k \;[\chi:\sigma_l|_\Gamma],\\
[\chi':\pi_k|_{\Gamma'}] &= \sum_{l=0}^k \;[\chi':\sigma_l|_{\Gamma'}],
\end{align*}
for every $k\geq0$.
It follows that $[\chi:\pi_k|_{\Gamma}]=[\chi':\pi_k|_{\Gamma'}]$ for every $k\geq0$ if and only if $[\chi':\sigma_k|_{\Gamma}]=[\chi':\sigma_k|_{\Gamma'}]$ for every $k\geq0$, hence the proof is complete, by Theorem~\ref{thm:spec_chi_rankone}.
\end{proof}

This implies that each pair of $(2n-1)$-dimensional untwisted isospectral spherical space forms (or orbifolds) in \cite{Ikeda80_isosp-lens}, \cite{Ikeda83}, \cite{Gilkey85}, \cite{Wolf01}, \cite{GornetMcGowan06}, \cite{Shams11}, \cite{LMR15-onenorm}, \cite{DeFordDoyle14}, induces a pair of $2n$-dimensional isospectral orbifolds covered by $S^{2n}$.

\begin{remark}
There is a similar result in \cite[page~261]{Pesce96} for $\chi$ trivial.
Here, Pesce describes a method to produce a pair of isospectral orbifolds covered by $P^{2n-1}(\C)$ from a pair of isospectral manifolds covered by $S^{2n-1}$, making use of \cite[Lemme~9]{Pesce96}.
\end{remark}

\bibliographystyle{plain}

\end{document}